\newtheoremstyle{theoremstyle}
  {10pt}      
  {5pt}       
  {\itshape}  
  {}          
  {\bfseries} 
  {:}         
  {.5em}      
  {}          
\newtheoremstyle{examplestyle}
  {10pt}      
  {5pt}       
  {}          
  {}          
  {\bfseries} 
  {:}         
  {.5em}      
  {}          
\newtheoremstyle{sub-style}
  {10pt}      
  {5pt}       
  {}  
  {}          
  {\bfseries} 
  {.}         
  {.5em}      
  {}          
\theoremstyle{theoremstyle}
\newtheorem{theorem}{Theorem}[section]
\newtheorem*{theorem*}{Theorem}
\newtheorem{lemma}[theorem]{Lemma}
\newtheorem{proposition}[theorem]{Proposition}
\newtheorem*{proposition*}{Proposition}
\newtheorem{corollary}[theorem]{Corollary}
\newtheorem*{corollary*}{Corollary}
\theoremstyle{examplestyle}
\newtheorem{example}[theorem]{Example}
\newtheorem*{example*}{Example}
\newtheorem{definition}[theorem]{Definition}
\newtheorem*{definition*}{Definition}
\newtheorem{remark}[theorem]{Remark}
\newtheorem*{remarks*}{Remarks}
\newtheorem*{remark*}{Remark}
\theoremstyle{sub-style}
\newtheorem{sub}[theorem]{}
\newcommand{\comment}[1]{}
\newcommand{\im}{\operatorname{im}}
\newcommand{\sh}[1]{\mathcal{#1}}
\newcommand{\csh}[1]{\mathcal{#1}}
\newcommand{\rk}{\operatorname{rk}}
\newcommand{\Hom}{\operatorname{Hom}}
\newcommand{\RHom}{R\mathcal{H}om}
\newcommand{\rhom}{\operatorname{RHom}}
\newcommand{\Ext}{\operatorname{Ext}}
\newcommand{\End}{\operatorname{End}}
\newcommand{\Ch}{\operatorname{CH}}
\newcommand{\spec}{\operatorname{spec}}
\newcommand{\characteristic}{\operatorname{char}}
\newcommand{\diag}{\operatorname{diag}}
\newcommand{\knum}{K_0^{\operatorname{num}}}
\newcommand{\chnum}{\operatorname{CH}_{\operatorname{num}}}
\newcommand{\K}{\mathbb{K}}
\newcommand{\Z}{\mathbb{Z}}
\newcommand{\Q}{\mathbb{Q}}
\DeclareMathOperator\ch{ch}
\begin{document}

\title[Exceptional Sequences]{Combinatorial aspects of exceptional sequences on (rational) surfaces}

\subjclass[2010]{Primary: 14F05, 14J26, 14M25; Secondary: 14J29, 32S25}

\author{Markus Perling}
\address{Universit\"at Bielefeld, Fakult\"at f\"ur Mathematik\\Postfach 100 131\\33501 Bielefeld\\Germany}
\email{perling@math.uni-bielefeld.de}
\thanks{With support of the DFG priority program 1388 ``Representation theory'' and the Max Planck
Institute for Mathematics.}

\begin{abstract}
We investigate combinatorial aspects of exceptional sequences in the derived category
of coherent sheaves on certain smooth and complete algebraic surfaces.
We show that to any such sequence
there is canonically associated a complete toric surface whose torus fixpoints are
either smooth or cyclic T-singularities (in the sense of Wahl) of type
$\frac{1}{r^2}(1, kr - 1)$. We also show that any exceptional sequence can be transformed by
mutation into an exceptional sequence which consists only of objects of rank one.
\end{abstract}

\maketitle

\tableofcontents

\section{Introduction}

In this article we want to work out certain combinatorial aspects associated with
exceptional sequences in the derived category $D^b(X)$ of coherent sheaves on rational
surfaces. In earlier work \cite{HillePerling11} it was found, somewhat surprisingly, that
to any exceptional sequence of invertible sheaves on a rational surface there is associated
in a canonical way the combinatorial data of a smooth complete toric surface. This finding
suggests that in many interesting cases there could be a link between semi-orthogonal
decompositions of derived categories and toric geometry. However, so far this is not very
well understood, even for the case of line bundles. An important development in
this direction is work by Hacking and Prokhorov \cite{HackingProkhorov10} and
Hacking \cite{Hacking13}.
In \cite{HackingProkhorov10}, singular surfaces with ample anticanonical divisor and Picard number one
which admit $\Q$-Gorenstein smoothings are classified. These surfaces necessarily have
$T$-singularities in the
sense of Wahl \cite{Wahl81}. Among such surfaces, there is one family of weighted
projective planes $\mathbb{P}(e^2, f^2, g^2)$ such that $e, f, g$ satisfy the Markov equation
\begin{equation*}
e^2 + f^2 + g^2 = 3efg.
\end{equation*}
This classification resembles Rudakov's interpretation  \cite{Rudakov89a} of the classification
of exceptional bundles on $\mathbb{P}^2$ by Drezet and Le Potier \cite{DrezetLePotier}. Rudakov showed
that any exceptional sequence $\sh{E}, \sh{F}, \sh{G}$ on $\mathbb{P}^2$ is essentially uniquely
determined by the ranks ($e, f, g$, say) and the possible ranks correspond to solutions of
the Markov equation $e^2 + f^2 + g^2 = 3efg$. In \cite{Hacking13}, Hacking shows that indeed
there exists a natural bijective correspondence between degenerations of $\mathbb{P}^2$ and
exceptional bundles on $\mathbb{P}^2$. More generally, for a rather large class of surfaces
(which includes rational surfaces and certain surfaces of general type) Hacking constructs
a correspondence between exceptional vector bundles and $\Q$-Gorenstein degenerations.

In this article, we want to follow some ideas of both \cite{HillePerling11} and \cite{Hacking13}
and show that the relation between exceptional sequences and toric surfaces with $T$-singularities
is a quite general phenomenon. Our main result is the following:

\begin{theorem*}[\ref{maintheorem2}]
Let $X$ be a numerically rational surface and let $\mathbf{E} = \sh{E}_1,\dots, \sh{E}_n$
be a numerically exceptional sequence whose length equals $\rk \knum(X)$ such that
$\rk \sh{E}_i = e_i \neq 0$ for every $i$. Then
to this sequence there is associated in a canonical way a complete toric surface $Y(\mathbf{E})$
with $n$ torus fixpoints which are either smooth (if $e_i^2 = 1$) or $T$-singularities of type
$\frac{1}{e_i^2}(1, k_i e_i - 1)$, where $\gcd\{k_i, e_i\} = 1$.
Moreover, this correspondence induces a natural isomorphism of Chow rings
$\Ch^*(Y(\mathbf{E}))_\Q \rightarrow \chnum^*(X)_\Q$ which maps $K_{Y(\mathbf{E})}$ to
$K_X$.
\end{theorem*}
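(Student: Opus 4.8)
The plan is to distill from the sequence a cyclic system of $\Q$-divisor classes $A_1,\dots,A_n$ on $X$, to read off its intersection matrix from the Euler form, and then to recognize this matrix as that of the torus-invariant divisors of a (possibly singular) complete toric surface. First I would record the Hirzebruch--Riemann--Roch formula for the Euler pairing on a surface with $\chi(\sh{O}_X)=1$ (which holds since $p_g=q=0$): writing $c_i:=c_1(\sh{E}_i)$, one has
\[
\chi(\sh{E}_i,\sh{E}_j)=e_ie_j+e_i\ch_2(\sh{E}_j)+e_j\ch_2(\sh{E}_i)-c_i\cdot c_j+\tfrac12\bigl(e_j\,c_i-e_i\,c_j\bigr)\cdot K_X .
\]
Numerical exceptionality means the Gram matrix $\bigl(\chi(\sh{E}_i,\sh{E}_j)\bigr)$ is upper triangular with $1$'s on the diagonal, hence unimodular; since the length equals $\rk K_0(X)=2+\rho(X)$, the classes $\ch(\sh{E}_i)$ form a $\Q$-basis of $K_0(X)\otimes\Q$. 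From $\chi(\sh{E}_i,\sh{E}_i)=1$ I read off that each object has Bogomolov discriminant $\Delta_i=2e_i c_2(\sh{E}_i)-(e_i-1)c_i^2=e_i^2-1$; this identity is the ultimate source of the index $e_i^2$.

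Next I would introduce the slopes $\mu_i:=c_i/e_i\in\pic(X)\otimes\Q$ and set $A_i:=\mu_{i+1}-\mu_i$ for $1\le i\le n-1$, closing the cycle with $A_n:=-K_X+\mu_1-\mu_n$. The point of passing to slopes is that the full sequence extends to a helix via Serre duality, under which $-\otimes\,\omega_X^{-1}$ shifts slopes by $-K_X$ (so $\mu_{i+n}=\mu_i-K_X$); this makes the closing canonical and yields $\sum_i A_i=-K_X$ automatically. I would then solve the symmetric and antisymmetric parts of the (known, unipotent) Gram matrix for the products $c_i\cdot c_j$ and the classes $\ch_2(\sh{E}_i)$, and substitute to compute the intersection numbers of the $A_i$. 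The expectation --- generalizing the rank-one computation of \cite{HillePerling11} --- is that $A_{i-1}\cdot A_i=1/e_i^2$, that $A_i\cdot A_j=0$ for non-adjacent indices, and that the self-intersections are then forced by $A_i\cdot\bigl(\sum_j A_j\bigr)=-A_i\cdot K_X$. The denominator $e_i^2$ appearing here is exactly the reflection of the discriminant identity $\Delta_i=e_i^2-1$.

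With the intersection data in hand I would reconstruct the toric surface. A complete toric surface with rays $v_1,\dots,v_n\in N\cong\Z^2$ has Picard number $n-2=\rho(X)$, its invariant divisors satisfy $D_{i-1}\cdot D_i=1/\operatorname{mult}(\sigma_i)$ with $\operatorname{mult}(\sigma_i)=\lvert\det(v_{i-1},v_i)\rvert$, and so matching $D_i\leftrightarrow A_i$ forces $\operatorname{mult}(\sigma_i)=e_i^2$: the fixpoint attached to $\sh{E}_i$ is the cone shared by the two divisors $A_{i-1},A_i$ built from $\mu_i$. Each such cone is a cyclic quotient singularity $\tfrac1{e_i^2}(1,q_i)$, and to pin down $q_i$ I would exhibit the intermediate ray $w_i=(k_i,-e_i)$ (in local coordinates in which $v_{i-1}=(1,0)$), which splits $\sigma_i$ into two cones of multiplicity $e_i$; this forces $q_i=k_ie_i-1$, and since $k_ie_i-1\equiv-1\pmod{e_i}$ the pair is automatically admissible. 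The coprimality $\gcd(k_i,e_i)=1$ is precisely primitivity of $w_i$, which I would deduce from the fact that $e_i$ is the exact denominator of the slope $\mu_i$ --- a consequence of exceptionality.

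The main obstacle is the reconstruction step, and it is both global and local. Globally I must prove that the rays $v_i$ produced from the local intersection data actually close up into a complete fan that winds exactly once around the origin; this ``winding number one'' statement is the crucial positivity and integrality input, and I expect it to follow from $\sum_i A_i=-K_X$ together with unimodularity of the Euler form, but establishing it carefully (rather than merely verifying local consistency of the slope data) is delicate. Locally, the subtle point is to show that the multiplicity-$e_i^2$ cone is genuinely of Wahl type $\tfrac1{e_i^2}(1,k_ie_i-1)$ with $\gcd(k_i,e_i)=1$, and not some other cyclic quotient singularity of the same index; this is exactly where the exceptionality of the $\sh{E}_i$ --- encoded in $\Delta_i=e_i^2-1$ and in the primitivity of the slope vector $w_i$ --- must be used in an essential way.
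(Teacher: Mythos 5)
Your setup---the slope classes $\mu_i$, the toric system with $A_{i-1}\cdot A_i=1/e_i^2$, orthogonality of non-adjacent $A_i$, $\sum_i A_i=-K_X$, and the discriminant identity coming from $\chi(\sh{E}_i,\sh{E}_i)=1$---matches the paper's Sections \ref{triplesection}--\ref{galedualsection}. But the two steps that you yourself flag as the main obstacles are precisely the content of the paper's proof, and the routes you sketch for them are not substantiated and, in the first case, cannot work as stated. For the winding-number-one statement you ``expect it to follow from $\sum_i A_i=-K_X$ together with unimodularity of the Euler form''; these two inputs are insensitive to the winding number. The paper's proof (Proposition \ref{winding}) runs through the mutation machinery: Lemma \ref{concavemutation} and Corollary \ref{algorithm} first transform the sequence so that every pair with $A_i^2<0$ is concave; then the Hodge index theorem (Lemma \ref{positiveselfintersection})---i.e.\ the signature $(1,n-3)$ of the intersection form on $\Ch^1(X)_\Q$, which is a strictly stronger input than unimodularity of $\chi$---forces all but at most two adjacent $A_i$ to have negative self-intersection, a half-space argument on the circumference segments then excludes multiple windings, and finally one transfers the conclusion back along mutations because mutation changes the fan only locally (Proposition \ref{mutationlocal}).

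For the local step, your plan is to deduce primitivity of the intermediate ray $w_i$ from ``$e_i$ is the exact denominator of the slope $\mu_i$.'' Exceptionality does give $c_1(\sh{E}_i)^2\equiv -1 \pmod{e_i}$, so no prime factor of $e_i$ divides $c_1(\sh{E}_i)$ in $\Ch^1(X)$; but this is a statement in $\Ch^1(X)$, and there is no local argument transferring it to primitivity of a vector in the Gale-dual lattice $N$. In the paper even the \emph{integrality} of $w_i$ (Lemma \ref{peri1}) is a global statement: its proof mutates $\sh{E}_i$ against every other nonzero-rank member of the sequence and invokes $\gcd\{e_1,\dots,e_n\}=1$, i.e.\ surjectivity of the rank map on $K_0(X)$. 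The same global mechanism underlies the primitivity of the rays $l_i$ themselves (Proposition \ref{primitivity}), which your reconstruction takes for granted. The coprimality $\gcd\{k_i,e_i\}=1$ is then obtained not by a local computation but by showing that mutation preserves the lattice length of circumference segments (Lemma \ref{peri3}) and mutating the whole sequence down to one consisting of rank-one (and rank-zero) objects (Theorem \ref{rankone}), where primitivity is evident; Lemma \ref{peri0} converts primitivity back into the coprimality statement. In short, mutations---absent from your proposal---are the engine of both hard steps, and without them the proposal has genuine gaps exactly at the two points you identify as delicate.
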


Here, $\knum(X)$ denotes the numerical Grothendieck group of $X$ which is
defined as the quotient of the Grothendieck group $K_0(X)$ by the null space of the Euler form,
and $\chnum^i(X)$ denotes the $i$-th Chow group modulo numerical equivalence. In particular,
$\chnum^1(X)$ is the {\em N\'eron-Severi lattice} of $X$.
The ring structure of $\chnum^*(X)$ is induced from $\Ch^*(X)$.

The term {\em numerically exceptional} refers to a weaker version of exceptionality and
semi-orthogonality which
only requires the vanishing of Euler characteristics rather than $\Hom$-vanishing (see Definition
\ref{exceptionaldef}). By $X$ being a numerically rational surface, we mean that $X$ is an
algebraic surface whose effective numerical properties are that of a rational surface.
More precisely, we define:

\begin{definition*}
Let $X$ be a smooth complete surface defined over a ground field. We call $X$
{\em numerically rational} if the following hold:
\begin{enumerate}[1)]
\item $\chi(\sh{O}_X) = 1$.
\item $K_X^2 = 12 - \rk \knum(X)$.
\end{enumerate}
\end{definition*}
By construction, both $\knum$ and $\chnum^1(X)$ are torsion free abelian groups, and
the Chern isomorphism $K_0(X)_\Q \rightarrow \Ch^*(X)_\Q$ descends to a ring isomorphism
$\knum(X)_\Q \rightarrow \chnum^*(X)_\Q$. So $\knum(X)$ and $\chnum^1(X)$ both are finitely generated
with $\rk \knum(X) = \rk \chnum^1(X) + 2$.
The class of numerically rational surfaces in particular includes surfaces of general type
with $p_g = q = 0$.

We will always assume that our numerically rational surface admits a numerically exceptional
sequence of {\it maximal length}, i.e. a semi-orthonormal basis of $\knum(X)$. Note that
condition 2) can be dropped in many cases (see Remark \ref{relaxconditions}),
though it is an open question whether it can be removed entirely.

Another interesting result
which will be part of our analysis leading to Theorem \ref{maintheorem2} is the following.

\begin{theorem*}[\ref{rankone}]
Let $X$ be a numerically rational surface. Then any numerically exceptional sequence of maximal
length on $X$ can be transformed by mutation into a numerically exceptional sequence consisting
only of objects of rank one.
\end{theorem*}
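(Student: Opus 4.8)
The plan is to reduce the assertion to a purely numerical descent on the ranks. Mutation, numerical exceptionality and the rank of an object all factor through the numerical Grothendieck group $K_0(X)\cong\Z^{n}$, so I would first record the action of mutation on classes: for an adjacent pair $(\sh{E}_i,\sh{E}_{i+1})$ the semi-orthogonality gives $\chi(\sh{E}_{i+1},\sh{E}_i)=0$, the left mutation satisfies $[L_{\sh{E}_i}\sh{E}_{i+1}]=c\,[\sh{E}_i]-[\sh{E}_{i+1}]$ with $c=\chi(\sh{E}_i,\sh{E}_{i+1})$, and the right mutation satisfies the mirror formula. On ranks this reads $\rk(L_{\sh{E}_i}\sh{E}_{i+1})=c\,e_i-e_{i+1}$ and $\rk(R_{\sh{E}_{i+1}}\sh{E}_i)=c\,e_{i+1}-e_i$. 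Since replacing an object by its shift negates its class, hence its rank, and preserves numerical exceptionality, I may normalise all $e_i>0$; the goal becomes to reach $e_i=1$ for every $i$.

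Next I would organise the descent. The integer $N=\sum_i e_i^{2}$ is shift-invariant and equals the length of the sequence precisely when all ranks are one. A direct computation shows that a left mutation at position $i$ changes $N$ by $c\,e_i(c\,e_i-2e_{i+1})$ and a right mutation by $c\,e_{i+1}(c\,e_{i+1}-2e_i)$. Thus a mutation is contracting exactly when $c>0$ together with $c\,e_i<2e_{i+1}$ (respectively $c\,e_{i+1}<2e_i$), and the whole problem is reduced to a reduction lemma: whenever some rank exceeds one, some adjacent pair admits a contracting mutation.

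Establishing this lemma is the heart of the matter and, I expect, the main obstacle. Riemann--Roch on $X$, using $p_g=q=0$, turns each condition $\chi(\sh{E}_i,\sh{E}_i)=1$ into the discriminant identity $2e_i c_2(\sh{E}_i)-(e_i-1)c_1(\sh{E}_i)^2=e_i^2-1$, and turns $\chi(\sh{E}_{i+1},\sh{E}_i)=0$ into one quadratic relation among the Chern data of the pair; together these express $c$ through the ranks and the intersection numbers of $c_1(\sh{E}_i),c_1(\sh{E}_{i+1})$ on $\mathrm{NS}(X)$. One cannot hope to bound $c$ uniformly, since already on $\mathbb{P}^2$ the ranks sweep out all Markov numbers; the descent must instead come from choosing the pair adjacent to a rank of maximal absolute value and showing that the associated $c$ is positive and small enough to force $c\,e_i<2e_{i+1}$. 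I would extract this from the Hodge index theorem applied to $c_1(\sh{E}_i)$ and $c_1(\sh{E}_{i+1})$ inside the signature $(1,n-3)$ lattice $\mathrm{NS}(X)$, which is exactly the mechanism behind the Markov-equation descent on $\mathbb{P}^2$; carrying it out for an arbitrary N\'eron--Severi lattice is where the genuine difficulty lies.

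Finally I would assemble the induction and treat the degeneracies. The subtle point is that $N$ is \emph{not} minimised at the all-ones configuration once rank-zero objects are admitted, so the argument must be arranged to strictly decrease the maximal absolute rank rather than to minimise $N$ blindly: the reduction lemma is used to push every rank down to absolute value at most one, after which a separate, short mutation step removes any residual rank-zero objects using the Chern-class relations, yielding a sequence of rank one. Two places demand particular care, and are where I expect the real work: (i) pairs with $c\le0$, on which every adjacent mutation is locally expanding, so that one must show such pairs can always be by-passed by first mutating elsewhere; and (ii) mutations that would output a rank-zero object, which must be steered around by selecting the alternative move. Both are governed by the signature computation above, but verifying that they can never simultaneously block all contracting moves is the delicate part of the proof.
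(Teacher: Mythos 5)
Your proposal correctly identifies the overall shape of the argument --- a mutation descent on ranks, sign normalization by shifts, the Hodge index theorem as the engine, and a final clean-up of rank-zero objects (this last step does match the paper: when $e_i = 0$ and $e_{i+1}^2 = 1$, the right mutation has rank $-e_{i+1}$ by Corollary \ref{rankzerorank}, which in the toric picture is a smooth blow-up). But as a proof it has a genuine gap, and you say so yourself: the ``reduction lemma'' --- whenever some rank exceeds one, some \emph{adjacent} pair admits a contracting mutation --- is the entire content of the theorem, and it is neither proved nor reduced to anything you do prove. The paper's treatment shows why no purely local, pair-by-pair argument of this kind is available. What can be proved locally is much weaker: Lemma \ref{concavemutation} yields a rank-decreasing mutation only for a \emph{convex} pair with $c_1(\sh{E},\sh{F})^2 < 0$. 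The existence of such a pair is a global statement; it is extracted (for $n - t > 4$) from the Hodge index theorem via Lemma \ref{positiveselfintersection}, but only after the Gale-dual apparatus of Sections \ref{toricsystemsection}--\ref{localconstellations} is in place, and only after the rank-zero objects have been shuffled out of the way by mutations that are neutral for your potential $N = \sum_i e_i^2$ (step 2 of Corollary \ref{algorithm}, \ref{zerojumpremark}). In particular, a contracting move need not exist among the adjacent pairs of the sequence as given, only after such a rearrangement, so the lemma as you state it is not what one should try to prove.

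Second, the descent does not terminate where you want it to. The paper's algorithm (Corollary \ref{algorithm}) halts at configurations that are \emph{not} all of rank $\pm 1$: by Corollary \ref{rankonezero} one ends either with $t = n - 4$ rank-zero objects and four objects of rank $\pm 1$, or with $t = n - 3$ rank-zero objects and a triple $f_1, f_2, f_3$ satisfying the Markov equation $f_1^2 + f_2^2 + f_3^2 = 3 f_1 f_2 f_3$. Getting from a Markov triple down to $(1,1,1)$ is a separate descent, governed by the Markov arithmetic of \ref{p2example} and \ref{nequals4}, and establishing that these are the \emph{only} terminal configurations requires the winding argument of Proposition \ref{winding}, i.e.\ the fact that the Gale duals of the toric system close up into the fan of a complete toric surface. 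So your single reduction lemma conflates three mechanisms that the paper handles by different means: the local convexity descent, the global existence of a good pair (Hodge index plus the winding analysis), and the Markov descent on terminal configurations. Until the reduction lemma, or some substitute assembling these three ingredients, is actually proved, the proposal is a program rather than a proof.
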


Both theorems a fortiori apply as well to proper exceptional sequences. The main reason why
we formulated them for numerically exceptional sequences is that indeed they are purely
a result of the surprisingly rich Riemann-Roch arithmetic. The correspondence between
exceptional sequences and toric surfaces therefore does not depend on any geometric construction
or any refined geometric properties of $X$. This is of particular interest in light of recent work
(e.g. \cite{BBKS12}) where exceptional sequences have been constructed on certain complex surfaces of
general type with $p_g = q = 0$. These sequences are almost full --- their complements in the
derived category are among the first examples of so-called phantom categories. By
\cite[Theorem 3.1]{Vial15} our results are applicable to all surfaces of general type with $p_g = q = 0$
including those of \cite{BBKS12}.
Indeed it is an open question whether the existence of a full exceptional sequence on a variety
implies that this variety is rational. So far, no example of a non-rational variety which admits such
a sequence is known.
We believe that our results in conjunction with Hacking's provide tentative evidence that indeed
the existence of a full exceptional sequence implies rationality.

One important aspect of our results is the connection between mutation of exceptional sequences
and what we propose to call minimal model program for a class of toric surfaces which includes, but
is strictly bigger than, the class of smooth toric surfaces. We will spend
the remainder of this introduction to explain this connection and to relate it to some of the
technical results in the paper. Assume that $\sh{E}_1, \dots, \sh{E}_n$ is an exceptional sequence,
where $n = \rk \knum(X)$ and for simplicity, $0 \neq e_i := \rk \sh{E}_i$ for every $i$. Generalizing
an idea of \cite{HillePerling11}, we denote $A_i := c_1(\sh{E}_{i + 1})/e_{i + 1} - c_1(\sh{E}_i)/e_i
\in \chnum^1(X)_\Q$ for $1 \leq i < n$, $A_n :=  c_1(\sh{E}_1)/e_1 - c_1(\sh{E}_n)/e_n - K_X$,
and $A$ the $\Z$-linear span of $A_1, \dots, A_n$ in $\chnum^1(X)_\Q$. Then $A$ is a free $\Z$-module
of rank $n - 2$ and we have a short exact sequence:
$$
0 \longrightarrow \Z^2 \overset{L}\longrightarrow \Z^n \overset{c}{\longrightarrow} A
\longrightarrow 0,
$$
where $c$ is the map which sends the $i$-th standard basis vector of $\Z^n$ to $A_i$.
We can now choose to associate the rows $l_1, \dots, l_n$ of $L$ with the columns of
$c$ (the $A_i$, disregarding a choice of basis for $A$), i.e. we consider $l_i \in \Z^2$ as
associated to $A_i \in A$. This association is often called {\em Gale duality}. It is
elementary to see that Gale duality is complementary with respect to linear dependency,
e.g. if some of the $l_i$ form a basis of $\Z^2$ then the complementary $A_j$ form a basis
of $A$; if some of the $l_i$ form a minimal linearly dependent set, then the complementary
$A_j$ generate a hyperplane in $A$, and so on. A substantial part of this paper is devoted to
determine from the $A_i$ and their respective intersection products that
the $l_i$ form a circularly ordered set of primitive lattice vectors which generate the fan
of a complete toric surface (Proposition \ref{winding}) which has at most $T$-singularities
(Theorem \ref{maintheorem2}). The corresponding fans are characterized by their collection of primitive
vectors $l_1, \dots, l_n$ and nonzero integers $e_1, \dots, e_n$ such that:
\begin{enumerate}[1)]
\item the determinants of two adjacent lattice vectors are squares: $\det(l_{i - 1}, l_i) = e_i^2$,
\item the differences $l_i - l_{i - i}$ have lattice length $|e_i|$. We think of these difference
as segments of the ``circumference'' of the fan which connects the $l_i$. In particular, as in the
depictions below, one often does not care about the orientation of these segments
(see Section \ref{mutationsection}, however).
\end{enumerate}
Now recall that a complete toric surface $Y(\mathbf{E})$ has $n$ torus invariant prime divisors
$D_1, \dots, D_n$ which form a cycle, i.e. $D_i \simeq \mathbb{P}^1$ for every $i$ and
$D_i \cdot D_j = 0$ whenever $|i - j| > 1$ and $D_i \cdot D_{i + 1} = 1/e_{i + 1}^2 \in \Q
= \Ch^2(Y)_\Q$ for every $i$, where
the product on $\Ch^*(Y(\mathbf{E}))_\Q$ is the orbifold intersection product. For our constructions, we have
the correspondence
$A_i \leftrightarrow l_i$ via Gale duality, and toric geometry relates $l_i \leftrightarrow D_i$.
The ring isomorphism $\Ch^*(Y(\mathbf{E}))_\Q \rightarrow \chnum^*(X)_\Q$ of Theorem \ref{maintheorem2}
then is determined by mapping $D_i$ to $A_i$ (see Remark \ref{divisorcorrespondence}).

\captionsetup{margin=.3cm}

To give an example, consider some Hirzebruch surface $\mathbb{F}_a$ for some $a \geq 0$ and denote
$P, Q$ the primitive integral generators of its nef cone, where $P^2 = 0$ is the class of the fiber
and $Q^2 = a$ is the class of the relative ample divisor of the fibration
$\mathbb{F}_a \rightarrow \mathbb{P}^1$.
Then for any $s \in \Z$,
$$
\sh{O}, \sh{O}(P), \sh{O}((s + 1)P + Q), \sh{O}((s + 2)P + Q)
$$
is a full exceptional sequence on $\mathbb{F}_a$ (see e.g. \cite[Proposition 5.2]{HillePerling11}).
According to \cite[Theorem 3.5]{HillePerling11} the toric surface associated to this sequence is
the Hirzebruch surface $\mathbb{F}_{b}$ where $b = |a + 2s|$. The associated fan is specified by
lattice vectors $l_1, l_2, l_3, l_4$, where, if we choose, say, $l_1, l_2$ as a
basis of $\Z^2$ then we have $l_3 = -l_1 - (a + 2s) l_2$ and $l_4 = -l_2$. Figure \ref{f3pic1} shows
the fan associated to this toric surface for the case $a = 3$ and $s = 1$.
\begin{figure}[ht]
\centering
\begin{minipage}{.49\linewidth}
\centering
\begin{tikzpicture}
\draw[style=help lines,dashed,very thin] (-1.3,-1.3) grid[step=.7cm] (1.3, 4.1);

\draw[very thick, dashed, red] (.7, 3.5) -- (0, .7);
\draw[very thick, dashed, red] (0, .7) -- (-.7, 0);
\draw[very thick, dashed, red] (-.7, 0) -- (0, -.7);
\draw[very thick, dashed, red] (0, -.7) -- (.7, 3.5);

\draw[->, thick] (0, 0) -- (.7, 3.5);
\draw[->, thick] (0, 0) -- (0, .7);
\draw[->, thick] (0, 0) -- (-.7, 0);
\draw[->, thick] (0, 0) -- (0, -.7);

\draw (-.5, .5) node { $1$ };
\draw (.1, 1.8) node { $1$};
\draw (-.5, -.5) node { $1$ };
\draw (.35, .3) node { $1$ };

\draw (1, 3.7) node { $l_3$ };
\draw (.25, -1) node { $l_2$ };
\draw (-.9, .2) node { $l_1$ };
\draw (-.2, 1) node { $l_4$ };
\end{tikzpicture}
\caption{The fan associated to
$\sh{O}, \sh{O}(P)$, $\sh{O}(2P + Q), \sh{O}(3P + Q)$
on $\mathbb{F}_3$.
}\label{f3pic1}
\end{minipage}
\begin{minipage}{.49\linewidth}
\centering
\begin{tikzpicture}
\draw[style=help lines,dashed,very thin] (-1.3,-1.3) grid[step=.7cm] (5.5, 4.1);

\draw[very thick, dashed, red] (.7, 3.5) -- (0, .7);
\draw[very thick, dashed, red] (0, .7) -- (-.7, 0);
\draw[very thick, dashed, red] (-.7, 0) -- (4.9, -.7);
\draw[very thick, dashed, red] (4.9, -.7) -- (.7, 3.5);

\draw[->, thick] (0, 0) -- (.7, 3.5);
\draw[->, thick] (0, 0) -- (0, .7);
\draw[->, thick] (0, 0) -- (-.7, 0);
\draw[->, thick] (0, 0) -- (4.9, -.7);

\draw (-.5, .5) node { $1$ };
\draw (.1, 1.8) node { $1$};
\draw (.35, -.4) node { $1$ };
\draw (1.7, 1) node { $36$ };

\draw (1, 3.7) node { $l_3$ };
\draw (5.15, -1) node { $l_2'$ };
\draw (-.9, .2) node { $l_1$ };
\draw (-.2, 1) node { $l_4$ };
\end{tikzpicture}
\caption{The fan associated to
$\sh{O}, \sh{O}(2P + Q), \sh{R}, \sh{O}(3P + Q)$
on $\mathbb{F}_3$.}\label{f3pic2}
\end{minipage}
\end{figure}
The numbers, which in
this case are always $1$, represent the lattice volumes $\det(l_i, l_{i + 1})$. The circumference
is indicated by the dashed line. We
can see that the lattice lengths of its segments are always $1$.
Applying a right mutation to the pair $\sh{O}(P), \sh{O}((s + 1)P + Q)$ yields another exceptional
sequence
$$
\sh{O}, \sh{O}((s + 1)P + Q), \sh{R}, \sh{O}((s + 2)P + Q)
$$
with $\sh{R} = R_{\sh{O}((s + 1)P + Q)} \sh{O}(P)$ and $r := \rk \sh{R} = a + 1 + 2s$.
The effect of the mutation to the combinatorial picture is the {\em mutation} of $l_2$ into
$l_2' = l_2 -(a + 2 + 2s) l_1$ (see Proposition \ref{mutationlocal} and Lemma \ref{peri2})
and the
corresponding toric surface has a cyclic singularity of type $\frac{1}{r^2}(1, r(r - 1) - 1)$.
Figure \ref{f3pic2} shows the case $a = 3$, $s = 1$ with $r = 6$, so that in the fan we have
created a cone of lattice
volume $36$ and the corresponding circumference segment of length $6$.

In \cite{Orlov93}, Orlov described how exceptional sequences (and more general semi-orthognal
decompositions) can be completed along blow-ups. Consider for example the exceptional sequence
$\mathcal{T}, \mathcal{O}(2), \mathcal{O}(4)$ on $\mathbb{P}^2$, where $\sh{T}$ denotes the tangent
bundle, and a blow-up $b : X \longrightarrow
\mathbb{P}^2$ in one point; we denote $E$ the exceptional curve with $E^2 = -1$. Figure \ref{p2pic1}
shows the fan corresponding to this sequence which describes the weighted projective plane
$\mathbb{P}(1, 1, 4)$.
\begin{figure}[ht]
\centering
\begin{minipage}{.49\linewidth}
\centering
\begin{tikzpicture}
\draw[style=help lines,dashed,very thin] (-1.3,-1.3) grid[step=.7cm] (1.3, 3.4);

\draw[->, thick] (0, 0) -- (.7, 0);
\draw[->, thick] (0, 0) -- (0, -.7);
\draw[->, thick] (0, 0) -- (-0.7, 2.8);

\draw (.3, 1.6) node { $4$ };
\draw (.55, -.5) node { $1$ };
\draw (-.35, .3) node { $1$ };

\draw (1, .2) node { $l_3$ };
\draw (-.25, -1) node { $l_2$ };
\draw (-1, 3) node { $l_1$ };

\draw[very thick, dashed, red] (.7, 0) -- (0, -.7);
\draw[very thick, dashed, red] (0, -.7) -- (-.7, 2.8);
\draw[very thick, dashed, red] (-.7, 2.8) -- (.7, 0);
\end{tikzpicture}
\caption{The fan associated to $\mathcal{T}, \mathcal{O}(2), \mathcal{O}(4)$
on $\mathbb{P}^2$.}\label{p2pic1}
\end{minipage}
\begin{minipage}{.49\linewidth}
\centering
\begin{tikzpicture}
\draw[style=help lines,dashed,very thin] (-1.3,-1.3) grid[step=.7cm] (1.3, 3.4);

\draw[very thick, dashed, red] (.7, 0) -- (0, -.7);
\draw[very thick, dashed, red] (0, -.7) -- (-.7, 2.8);
\draw[very thick, dashed, red] (-.7, 2.8) -- (.7, 0);

\draw[->, thick, double] (0, 0) -- (.7, 0);
\draw[->, thick] (0, 0) -- (0, -.7);
\draw[->, thick] (0, 0) -- (-0.7, 2.8);

\draw (.3, 1.6) node { $4$ };
\draw (.55, -.5) node { $1$ };
\draw (-.35, .3) node { $1$ };

\draw (1, .2) node { $l_3$ };
\draw (-.25, -1) node { $l_2$ };
\draw (-1, 3) node { $l_1$ };
\end{tikzpicture}
\caption{The fan associated to $\sh{O}_E(E), b^*\mathcal{T},$ $b^*\mathcal{O}(2),
b^*\mathcal{O}(4)$ on $X$.}\label{p2pic2}
\end{minipage}
\end{figure}
The pull-back of this sequence $b^*\mathcal{T}, b^*\mathcal{O}(2), b^*\mathcal{O}(4)$ is an
exceptional sequence on $X$ which can be completed to the full sequence
$\sh{O}_E(E), b^*\mathcal{T}, b^*\mathcal{O}(2), b^*\mathcal{O}(4)$. The object
$\sh{O}_E(E)$ has rank zero which leads in the combinatorial picture to a {\em doubling} of
a primitive vector ($l_3$ in this case, see Figure \ref{p2pic2}) or, if we like, to the creation
of a new cone of volume zero (see Proposition \ref{multiplicity} and Theorem \ref{deltatheorem}).
Applying right mutation to the pair $\sh{O}_E(E), b^*\sh{T}$ yields
a sequence $b^*\mathcal{T}, \sh{R}', b^*\mathcal{O}(2), b^*\mathcal{O}(4)$ where $\rk \sh{R}' = -4$.
Figure \ref{p2pic3} shows the effect of this mutation: one of the multiple lattice vectors ``jumps''
into the neighouring cone, thereby subdividing it into two cones of lattice volumes $4$ and $16$,
respectively. The corresponding toric surface therefore has two cyclic $T$-singularities of orders
$4$ and $16$.
\begin{figure}[ht]
\centering
\begin{minipage}{.49\linewidth}
\centering
\begin{tikzpicture}
\draw[style=help lines,dashed,very thin] (-1.3,-1.3) grid[step=.7cm] (2.7, 3.4);

\draw[very thick, dashed, red] (.7, 0) -- (0, -.7);
\draw[very thick, dashed, red] (0, -.7) -- (-.7, 2.8);
\draw[very thick, dashed, red] (-.7, 2.8) -- (2.1, 2.8);
\draw[very thick, dashed, red] (2.1, 2.8) -- (.7, 0);

\draw[->, thick] (0, 0) -- (.7, 0);
\draw[->, thick] (0, 0) -- (0, -.7);
\draw[->, thick] (0, 0) -- (-.7, 2.8);
\draw[->, thick] (0, 0) -- (2.1, 2.8);

\draw (1.6, 1.15) node { $4$ };
\draw (.55, -.5) node { $1$ };
\draw (-.35, .3) node { $1$ };
\draw (.35, 1.75) node { $16$ };

\draw (1, -.3) node { $l_3$ };
\draw (-.25, -1) node { $l_2$ };
\draw (-.9, 3) node { $l_1$ };
\draw (2.3, 3) node { $l_4$ };
\end{tikzpicture}
\caption{The fan for $b^*\mathcal{T}, \sh{R}', b^*\mathcal{O}(2),
b^* \mathcal{O}(4)$.}\label{p2pic3}
\end{minipage}
\begin{minipage}{.49\linewidth}
\centering
\begin{tikzpicture}
\draw[style=help lines,dashed,very thin] (-1.3,-1.3) grid[step=.7cm] (1.3, 1.3);

\draw[very thick, dashed, red] (.7, 0) -- (0, -.7);
\draw[very thick, dashed, red] (0, -.7) -- (-.7, .7);
\draw[very thick, dashed, red] (-.7, .7) -- (.7, 0);

\draw[->, thick, double] (0, 0) -- (.7, 0);
\draw[->, thick] (0, 0) -- (0, -.7);
\draw[->, thick] (0, 0) -- (-.7, .7);

\draw (1, -.3) node { $l_3$ };
\draw (-.25, -1) node { $l_2$ };
\draw (-.9, .9) node { $l_1$ };
\end{tikzpicture}
\caption{The fan for
$\sh{O}_E(E), b^*\sh{O}(2), b^*\sh{O}(3),$ $b^*\sh{O}(4)$.}\label{p2pic4}
\end{minipage}
\end{figure}
If instead we apply a right mutation to the pair $b^*\sh{T}, b^*\sh{O}(2)$, we obtain a sequence
isomorphic to $\sh{O}_E(E), b^*\sh{O}(2), b^*\sh{O}(3), b^*\sh{O}(4)$. Figure \ref{p2pic4}
shows that we end up with the fan for $\mathbb{P}^2$, again with $l_3$ doubled.
The reader may have noticed that the cyclic enumerations of the $l_i$ in Figure \ref{p2pic3}
are now shifted by one position as compared to the characterisation given earlier. For a complete
description of the correspondence between an exceptional sequence $\sh{E}_1, \dots, \sh{E}_n$
that may contain objects of rank zero and lattice vectors $l_1, \dots, l_n$ we refer to Sections
\ref{galedualsection} and \ref{rankzeromoving}; in particular compare Example \ref{p2blowupexample2}.

The transition from $b^*\sh{T}, \sh{R}', b^*\sh{O}(2), b^*\sh{O}(4)$ to $\sh{O}_E(E),
b^*\sh{O}(2), b^*\sh{O}(3), b^*\sh{O}(4)$
illustrates for a simple case how the minimal model program works for toric surfaces which
are associated to an exceptional sequence on some numerically rational surface
(see Example \ref{p2blowupexample} for an explicit representation of the associated toric systems).
In general, assume
we have a numerically exceptional sequence of maximal length $\mathbf{E} = \sh{E}_1, \dots, \sh{E}_n$
on such a surface $X$ and the corresponding toric surface $Y = Y(\mathbf{E})$.
\begin{enumerate}
\item For every pair $\sh{E}_i, \sh{E}_{i + 1}$, the ranks $e_i, e_{i + 1}$ and Euler characteristic
$\chi(\sh{E}_i, \sh{E}_{i + 1})$ translate into certain local convexity properties of the fan of
$Y$. These properties and their behaviour under mutation are studied in Section \ref{mutationsection}.
\item By Lemma \ref{concavemutation} there is a criterion when we can use these properties in order
to reduce the ranks of objects in $\mathbf{E}$ by mutation.
\item Taking global properties into account, we will show that we can use this criterion
to transform our sequence by
mutation into a sequence $\mathbf{E}' = \sh{Z}_1, \dots, \sh{Z}_t$, $\sh{F}_1, \dots, \sh{F}_{n - t}$
where either
$t = n - 3$ or $t = n - 4$, $\rk \sh{Z}_i = 0$ for every $i$, and $\rk \sh{F}_j \neq 0$ for every $j$
(Corollaries \ref{algorithm} \& \ref{rankonezero}). As in above examples, this means that the associated
fan $Y(\mathbf{E}')$ is generated by either $3$ or $4$ lattice vectors, one of which appears with
multiplicity $n - t + 1$. Note however that in Section \ref{rankzeromoving} we will see that the
distribution of multiplicities of these lattice
vectors is essentially arbitrary and in Section \ref{mutationsection} we decide to accumulate them
onto one lattice vector purely for convenience.
\item The cases $t = n - 3$ and $t = n - 4$ are easily analyzed. In the first case
(see \ref{p2example}), the triple $\rk \sh{F}_1, \rk \sh{F}_2, \rk \sh{F}_3$ is a solution of the
Markov equation and, as has already been explored by Rudakov \cite{Rudakov89a}, by further mutation
we can transform the $\sh{F}_i$ into objects of rank $\pm 1$. The corresponding toric surface then
is $\mathbb{P}^2$. In the case $t = n - 4$ we will have directly $\rk \sh{F}_j = \pm 1$ for every $j$,
in which case the corresponding fan will be that of a
Hirzebruch surface (see \ref{nequals4}).
\end{enumerate}

In summary, the minimal model program for toric surfaces associated to exceptional sequences
consists of minimizing lattice volumes
via mutation. Occasionally, we may create a cone of volume zero, which then will live on as the
multiplicity of some primitive vector (this effect corresponds to the blow-down of a smooth toric
surface at a $(-1)$-divisor). Analogous to blowing down smooth toric surfaces, the process
ends when we arrive at a Hirzebruch surface or $\mathbb{P}^2$. If we neglect multiple lattice vectors,
we get the following result.

\begin{theorem}[see Corollaries \ref{algorithm} \& \ref{rankonezero}]
Let $Y$ be a toric surface associated to a numerically exceptional sequence of maximal length
on a numerically rational
surface. Then it can be transformed via mutation into a Hirzebruch surface or a projective plane.
\end{theorem}

Note that we have not explicitly formulated the theorem in the body of this article and state it
here in order to give a summary of some of the technical statements of this paper and for the
reader's guidance. Also note that, if we translate the usual braid group action on exceptional
sequences to a braid group action on toric surfaces with T-singularities, then the theorem could
further be strenghtened to the effect that the
minimal model program for smooth toric surfaces embeds into this braid group action.
However, the classification of toric
surfaces with $T$-singularities is of broader interest (see e.g. \cite{KasprzykNillPrince15} for
recent results) and we leave a more detailed treatment to future work.

\smallskip

\paragraph{\bf Overview}
In Section \ref{generalitiessection} we introduce some basic notions. The reader will probably avoid
some confusion by paying attention to standing conventions as stated in paragraphs
\ref{conventions1} and \ref{conventions2}.
Section \ref{triplesection} is devoted to the exploitation of the
Riemann-Roch formula for exceptional objects. Sections \ref{rankzerosection} and \ref{rankzeromoving} deal with some crucial
aspects which arise if exceptional objects of rank zero are involved. In Sections
\ref{toricsystemsection} and \ref{galedualsection} toric systems and their Gale dual are introduced.
The latter will be analyzed locally in Sections \ref{localconstellations} and \ref{mutationsection}.
The global analysis and the main theorems are contained in Sections \ref{globalsection} and
\ref{maintheorem}. Section \ref{singularitiessection} then concludes with some observations related
to $T$-singularities. For easier reference, we collect some facts on toric surfaces in an
appendix.

\smallskip

\paragraph{\bf Acknowledgements}
I want to thank Lutz Hille for discussions at an early stage of this project and the referee
for thorough reading and pointing out a mistake in an earlier version of this article.
I also want to thank Charles Vial for discussion which helped to significantly widen the
applicability of the results.
I am grateful for its hospitality to the Max Planck Institute in Bonn, where
part of this work was done.

\section{Some generalities}\label{generalitiessection}

\begin{sub}[\bf Standing conventions throughout the rest of this paper]\label{conventions1}
We assume that $X$ is a numerically rational surface as defined in the introduction over some
ground field $\K$. We denote $D^b(X)$ the bounded derived category of coherent sheaves on
$X$. We will always write objects of $D^b(X)$ in calligraphic style, $\sh{E}, \sh{F}, \dots, \sh{Z}$.
Then their ranks will be denoted in the corresponding lower case letters $e, f, \dots, z$.
By $n$ we will always denote the rank of $\knum(X)$.
\end{sub}

For any two objects $\sh{E}, \sh{F}$ of $D^b(X)$ the Euler characteristic is defined as:
\begin{equation*}
\chi(\sh{E}, \sh{F}) = \sum_{k \in \Z} (-1)^k \dim \Hom_{D^b(X)}(\sh{E}, \sh{F}[k]) =
\sum_{k \in \Z} (-1)^k \dim \Ext^k_{\sh{O}_X}(\sh{E}, \sh{F}).
\end{equation*}

\begin{definition}\label{exceptionaldef}
\begin{enumerate}[(i)]
\item We call an object $\sh{E}$ of $D^b(X)$ {\em exceptional} if $\End(\sh{E}) \simeq \K$ and
$\Hom_{D^b(X)}(\sh{E}, \sh{E}[k]) = 0$ for all $k \neq 0$. We call $\sh{E}$ {\em numerically
exceptional} if $\chi(\sh{E}, \sh{E}) = 1$.
\item A sequence of objects $\sh{E}_1, \dots, \sh{E}_t$ is called an {\em exceptional
sequence} if all $\sh{E}_i$ are exceptional and $\Hom_{D^b(X)}(\sh{E}_i, \sh{E}_j[k]) = 0$
for all $i > j$ and all $k \in \Z$. Similarly, we call it a {\em numerically exceptional
sequence} if all $\sh{E}_i$ are numerically exceptional and $\chi(\sh{E}_i, \sh{E}_j) = 0$
for all $i > j$.
\item\label{exceptionaldefiii}
Denote $\omega_X = \sh{O}(K_X)$ the canonical sheaf on $X$. Then we can extend any
exceptional sequence $\sh{E}_1, \dots, \sh{E}_t$ to an infinite sequence
$\dots, \sh{E}_i, \sh{E}_{i + 1}, \dots$ such that $\sh{E}_{i + t} = \sh{E}_i \otimes
\omega_X^{-1}$ holds for any $i \in \Z$.
We call such a sequence a {\em cyclic exceptional sequence}. If the sequence is only numerically
exceptional, then we call it {\em cyclic numerically exceptional sequence}. For any $i \in \Z$
we call the subsequence $\sh{E}_{i + 1}, \dots, \sh{E}_{i + t}$ a {\em winding}.
\item An exceptional sequence is called {\em strongly} exceptional if
$\Hom_{D^b(X)}(\sh{E}_i, \sh{E}_j[k]) = 0$ for all $i, j$ and all $k \neq 0$. A cyclic exceptional
sequence is called cyclic {\em strongly} exceptional if every winding is strongly exceptional.
\item Any collection of objects in $D^b(X)$ is called {\em full} if it generates $D^b(X)$.
\end{enumerate}
\end{definition}

As general references for exceptional sequences we refer to \cite{Bondal90} and \cite{Rudakov90}.

\begin{sub}\label{shiftchern}
Note that any sub-interval of length at most $t$ of a cyclic (numerically) exceptional
sequence is a (numerically) exceptional sequence
(see \cite[Proposition 5.1]{HillePerling11}).
By convention, if we are given a fixed exceptional sequence $\sh{E}_1, \dots, \sh{E}_t$,
then we will always implicitly assume that it
is extended cyclically, i.e. we consider $\sh{E}_i$ for any $i \in \Z$, denoting
any element of the original sequence twisted by an appropriate power of $\omega_X$
as in \ref{exceptionaldef} (\ref{exceptionaldefiii})
\end{sub}

\begin{sub}
If $\sh{E}_1, \dots, \sh{E}_t$ is a (numerically) exceptional sequence, then so is
$\sh{E}_1, \dots, \sh{E}_{i - 1}, \sh{E}_i[j], \sh{E}_{i + 1}, \dots, \sh{E}_t$ for any
$i$ and any $j \in \Z$. So, as long as we are not interested in concrete representations
for the $\sh{E}_i$, we have some flexibility in considering exceptional sequences up to shift.
For instance, there usually is no loss of generality to assume $e_i \geq 0$ for all $i$. Note that
for any object $\sh{E}$ in $D^b(X)$ we have $\ch(\sh{E}) = -\ch(\sh{E}[1])$ which implies
$c_1(\sh{E}) = -c_1(\sh{E}[1])$ and $c_2(\sh{E}) + c_2(\sh{E}[1]) = c_1(\sh{E})^2$.
\end{sub}

\begin{sub}
For any pair of objects $\sh{E}, \sh{F}$ there exist the following two distinguished
triangles:
\begin{align*}
L_\sh{E}\sh{F} \longrightarrow \rhom(\sh{E}, \sh{F}) \otimes_\K \sh{E}
\xrightarrow{\text{can}} \sh{F}, \\
\sh{E} \xrightarrow{\text{can}} \rhom(\csh{E}, \csh{F})^* \otimes_\K \sh{F}
\longrightarrow R_\sh{F}\sh{E},
\end{align*}
where {\em can} in both cases denotes the canonical evaluation map. If $\sh{E}, \sh{F}$
form an exceptional pair, then it follows that both
$\sh{F}, R_\sh{F} \sh{E}$ and $L_\sh{E} \sh{F}, \sh{E}$ form exceptional pairs as well.
\end{sub}

\begin{definition}
For an exceptional or numerically exceptional pair $\sh{E}, \sh{F}$, we call the pairs
$\sh{F}, R_\sh{F} \sh{E}$ and $L_\sh{E} \sh{F}, \sh{E}$ its {\em right}- and
{\em left-mutation}, respectively.
\end{definition}

\begin{sub}
More generally, for a (numerical or proper) exceptional sequence
$\mathbf{E} := \sh{E}_1, \dots, \sh{E}_t$, we can consider mutations at the $i$-th position:
\begin{align*}
R_i \mathbf{E} & := \sh{E}_1, \dots, \sh{E}_{i - 1}, \sh{E}_{i + 1}, R_{\sh{E}_{i + 1}}
\sh{E}_i, \sh{E}_{i + 2}, \dots, \sh{E}_t,\\
L_i \mathbf{E} & := \sh{E}_1, \dots, \sh{E}_{i - 1}, \ L_{\sh{E}_i} \sh{E}_{i + 1}, \ 
\sh{E}_i, \ \sh{E}_{i + 2}, \dots, \sh{E}_t.
\end{align*}
Both $R_i \mathbf{E}$ and $L_i \mathbf{E}$ are exceptional sequences again. Moreover,
up to natural equivalence, the $R_i$ and $L_i$ satisfy the following properties:
\begin{enumerate}[(i)]
\item $L_i = R_i^{-1}$;
\item the braid relations
$R_i R_{i + 1} R_i = R_{i + 1} R_i R_{i + 1}$, $L_i L_{i + 1} L_i = L_{i + 1} L_i L_{i + 1}$.
\end{enumerate}
In particular, the operators $L_1, \dots, L_{t - 1}, R_1, \dots, R_{t - 1}$ establish a braid group
action on the exceptional sequences of length $t$.
Note that mutations extend in a canonical way to cyclic exceptional sequences.
\end{sub}

\begin{sub}\label{mutationrank}
The usual invariants for sheaves such as rank and Chern classes extend naturally to $D^b(X)$
(and then factor naturally through $\knum(X)$). In particular, the rank function is additive on
triangles and for an exceptional pair $\sh{E}, \sh{F}$ of ranks $e$ and $f$, respectively, we obtain
\begin{equation*}
\rk L_\sh{E}\sh{F} = \chi(\sh{E}, \sh{F}) e - f \quad \text{ and } \quad
\rk R_\sh{F}\sh{E} = \chi(\sh{E}, \sh{F}) f - e.
\end{equation*}
The first Chern classes can be written down directly:
\begin{equation*}
c_1(L_\sh{E}\sh{F}) = \chi(\sh{E}, \sh{F}) c_1(\sh{E}) - c_1(\sh{F}) \quad \text{ and }
\quad
c_1(R_\sh{F}\sh{E}) = \chi(\sh{E}, \sh{F}) c_1(\sh{F}) - c_1(\sh{E}).
\end{equation*}
For the second Chern classes, one can make use of the fact that for any triangle
$\sh{T}' \rightarrow \sh{T} \rightarrow \sh{T}''$, the Chern character satisfies
$\ch(\sh{T}) = \ch(\sh{T}') + \ch(\sh{T}'')$. With this, we obtain the following formula
for the second Chern classes of mutations:
\begin{equation*}
c_2(L_\sh{E} \sh{F}) = \binom{\chi(\sh{E}, \sh{F})}{2}c_1(\sh{E})^2 - \chi(\sh{E}, \sh{F})
c_1(\sh{E}) c_1(\sh{F}) + c_1(\sh{F})^2 + \chi(\sh{E}, \sh{F}) c_2(\sh{E}) - c_2(\sh{F}),
\end{equation*}
and similarly for $c_2(R_\sh{F} \sh{E})$.
\end{sub}

\begin{sub}[\bf More standing conventions]\label{conventions2}
In the following, mutations will be our main tool for manipulating (numerically) exceptional sequences
and we may keep in mind that any mutation of a proper exceptional sequence is proper exceptional
again. So, if we like to, we can distinguish between (proper) exceptional and numerically exceptional
orbits of the braid group action. In order to avoid cumbersome language or awkward abbreviations,
we will throughout sections \ref{rankzerosection} to \ref{mutationsection} use the term ``exceptional
sequence'' for both, proper and numerical exceptional sequences. The reader who does not care about
numerical exceptional sequences can safely assume that we are only dealing with proper exceptional
sequences. From section \ref{globalsection} on, we will start making the distinction between both
cases. Again, the non-numerically inclined reader can safely assume that all results a fortiori
apply to proper exceptional sequences.
\end{sub}

\section{Exceptional pairs and triples on surfaces}\label{triplesection}

In \cite{HillePerling11}, for an exceptional
sequence of invertible sheaves $\sh{O}(D_1), \dots, \sh{O}(D_n)$, we have considered
the differences of divisor classes $D_{i + 1} - D_i$. In our more general setting,
we use the following generalization.

\begin{definition*}
\begin{enumerate}[(i)]
\item For any objects $\sh{E}, \sh{F}$ in $D^b(X)$, we denote
$c_i(\sh{E}, \sh{F}) = c_i\big(\RHom(\sh{E}, \sh{F})\big)$.
\item For any object $\sh{E}$ of $D^b(X)$ of nonzero rank, we set $s(\sh{E}) := c_1(\sh{E}) / e
\in \chnum^1(X)_\Q$. For any two such objects
$\sh{E}, \sh{F}$ we set $$s(\sh{E}, \sh{F}) := s(\sh{F}) - s(\sh{E}) = \frac{1}{ef}c_1(\sh{E}, \sh{F}) .$$
\end{enumerate}
\end{definition*}

\begin{sub}\label{difflemma}
For any two objects $\sh{E}$, $\sh{F}$ the following formula holds:
\begin{equation*}
c_1(\csh{E}, \csh{F}) = e c_1(\csh{F}) - f c_1(\csh{E}).
\end{equation*}
This is immediately clear for vector bundles, because in this case
$\RHom(\sh{E}, \sh{F}) \simeq \sh{E}^* \otimes \sh{F}$. The extension to the general
case follows from the fact that, because $X$ is smooth, any object in $D^b(X)$
is quasi-isomorphic to a finite complex of vector bundles. In the case $\sh{E}$ and $\sh{F}$
have nonzero rank it follows that $s(\sh{E}, \sh{F}) = s(\RHom(\sh{E}, \sh{F}))$.
\end{sub}

\begin{sub}[Riemann-Roch formula]\label{RR}
For any $\sh{E}, \sh{F}$ in $D^b(X)$, the Riemann-Roch formula is:
\begin{gather*}
\chi(\csh{E}, \csh{F})
= ef - \frac{1}{2}K_X c_1(\csh{E}, \csh{F}) + \frac{1}{2}\big(fc_1(\csh{E})^2
+ e c_1(\csh{F})^2 - 2 c_1(\csh{E})c_1(\csh{F})\big) -  \big(fc_2(\csh{E}) +e c_2(\csh{F})\big).
\end{gather*}
\end{sub}

We now collect some identities which we obtain from simple inspection of the
Riemann-Roch formula.

\begin{sub}\label{eulerchar1}
Let $\sh{E}$ be any object in $D^b(X)$.
\begin{enumerate}[(i)]
\item\label{eulerchar1i} If $e = 0$ then $\sh{E}$ is numerically exceptional iff
$c_1(\sh{E})^2 = -1$.
\item\label{eulerchar1ii} If $e \neq 0$ then $\sh{E}$ is numerically exceptional iff
\begin{equation*}
c_2(\csh{E}) = \frac{1}{2e}(e^2 + (e - 1) c_1(\sh{E})^2 - 1).
\end{equation*}
\end{enumerate}
\end{sub}

\begin{sub}\label{eulerchar2}
Now for objects $\sh{E}, \sh{F}$ with $\chi(\sh{E}, \sh{E}) = \chi(\sh{F}, \sh{F}) = 1$,
we can use \ref{eulerchar1} to simplify the Riemann-Roch formula:
\begin{enumerate}[(i)]
\item\label{eulerchar2i} If $e,f  \neq 0$, then:
\begin{equation*}
\chi(\sh{E}, \csh{F}) = -\frac{1}{2} K_X c_1(\sh{E}, \sh{F}) +
\frac{1}{2ef}(c_1(\sh{E}, \sh{F})^2 + e^2 + f^2).
\end{equation*}
\item\label{eulerchar2ii} If $e = 0$ and $f \neq 0$, then:
\begin{equation*}
\chi(\sh{E}, \csh{F}) = \frac{f}{2} K_X c_1(\sh{E}) -
\big(\frac{f}{2} + c_1(\sh{E}) c_1(\sh{F}) + f c_2(\sh{E})).
\end{equation*}
\item\label{eulerchar2iii} If $e \neq 0$ and $f = 0$, then:
\begin{equation*}
\chi(\sh{E}, \csh{F}) = - \frac{e}{2} K_X c_1(\sh{F}) -
\big(\frac{e}{2} + c_1(\sh{E}) c_1(\sh{F}) + e c_2(\sh{F})\big).
\end{equation*}
\item\label{eulerchar2iv} If $e = f = 0$, then: $$\chi(\sh{E}, \sh{F}) =  \chi(\sh{F}, \sh{E}) = -c_1(\sh{E}) c_1(\sh{F}).$$
\end{enumerate}
\end{sub}

\begin{sub}\label{eulersym}
Anti-symmetrizing of the Euler form yields for any two objects $\sh{E}$, $\sh{F}$:
\begin{enumerate}[(i)]
\item\label{eulersymi}
$\chi(\csh{E}, \csh{F}) - \chi(\csh{F}, \csh{E}) = -K_X c_1(\csh{E}, \csh{F})$.
\end{enumerate}
If $\sh{E}$ and $\sh{F}$ are numerically exceptional, we moreover get by symmetrizing the Euler form:
\begin{enumerate}[(i)]
\setcounter{enumi}{1}
\item\label{eulersymii} If $e, f \neq 0$, then
$\chi(\csh{E}, \csh{F}) + \chi(\csh{F}, \csh{E}) = \frac{1}{ef}(c_1(\csh{E}, \csh{F})^2 + e^2 + f^2)$.
\item\label{eulersymiii} If $e = 0$ and $f \neq 0$, then $\chi(\csh{E}, \csh{F}) +
\chi(\csh{F}, \csh{E}) = -\big(f + 2 c_1(\sh{E}) c_1(\sh{F}) + 2f c_2(\sh{E})\big)$.
\item\label{eulersymiv} If $e \neq 0$ and $f = 0$, then $\chi(\csh{E}, \csh{F}) +
\chi(\csh{F}, \csh{E}) = -\big(e + 2 c_1(\sh{E}) c_1(\sh{F}) + 2e c_2(\sh{F})\big)$.
\item\label{eulersymv} If $e = f = 0$, then $\chi(\csh{E}, \csh{F}) + \chi(\csh{F}, \csh{E}) = -2 c_1(\sh{E})
c_1(\sh{F})$.
\end{enumerate}

In the case that $\sh{E}$ and $\sh{F}$ are numerically
exceptional and  $\chi(\csh{F}, \csh{E}) = 0$, the
formulas \ref{eulersym} yield particularly nice identities for the Euler
characteristic $\chi(\csh{E}, \csh{F})$:
\begin{enumerate}[(i)]
\setcounter{enumi}{5}
\item\label{eulersymvi}\begin{align*}
\chi(\sh{E}, \sh{F}) = -K_X c_1(\sh{E}, \sh{F}) = 
\begin{cases}
\frac{1}{ef}(c_1(\csh{E}, \csh{F})^2 + e^2 + f^2) & \text{ if } e, f \neq 0, \\
-\big(f + 2 c_1(\sh{E}) c_1(\sh{F}) + 2f c_2(\sh{E})\big)  & \text{ if } e=  0, f \neq 0, \\
-\big(e + 2 c_1(\sh{E}) c_1(\sh{F}) + 2e c_2(\sh{F})\big)  & \text{ if } e \neq 0, f = 0, \\
-2 c_1(\sh{E}) c_1(\sh{F}) = 0  & \text{ if } e = f = 0.
\end{cases}
\end{align*}
\end{enumerate}
\end{sub}

\begin{lemma}\label{paircommute}
The pair $\sh{E}, \sh{F}$ is numerically exceptional iff $\sh{F}, \sh{E} \otimes \omega_X^{-1}$ is
and properly exceptional iff
$\sh{F}, \sh{E} \otimes \omega_X^{-1}$ is. Moreover, the following equality holds:
\begin{equation*}
\chi(\sh{E}, \sh{F}) + \chi(\sh{F}, \sh{E} \otimes \omega^{-1}) = ef K_X^2
\end{equation*}
\end{lemma}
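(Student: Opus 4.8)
The plan is to verify the two claims separately, both by direct appeal to Riemann-Roch. For the first assertion, that $\sh{F}, \sh{E} \otimes \omega^{-1}$ is again an exceptional pair, I would argue via Serre duality. Since $\sh{E}, \sh{F}$ is an exceptional pair, we have $\chi(\sh{F}, \sh{E}) = 0$ (in the numerical case) or the stronger $\Hom$-vanishing (in the proper case). Serre duality gives $\Hom(\sh{F}, \sh{E}[k]) \cong \Hom(\sh{E}, \sh{F} \otimes \omega[2 - k])^*$, equivalently $\Hom(\sh{F}, \sh{E} \otimes \omega^{-1}[k]) \cong \Hom(\sh{E}, \sh{F}[2 - k])^*$. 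The right-hand side vanishes for $k > 0$ because $\sh{E}, \sh{F}$ form an exceptional pair, so $\sh{F}, \sh{E} \otimes \omega^{-1}$ satisfies the required semi-orthogonality; that each object is still exceptional is clear since tensoring by a line bundle preserves this property. In the numerical setting the same statement follows from the identity $\chi(\sh{F}, \sh{E} \otimes \omega^{-1}) = \chi(\sh{E}, \sh{F})$, which is itself a consequence of Serre duality at the level of Euler characteristics.

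For the numerical identity, I would compute both $\chi(\sh{E}, \sh{F})$ and $\chi(\sh{F}, \sh{E} \otimes \omega^{-1})$ using the Riemann-Roch formula of \ref{RR} and add them. The key simplification is that $c_1(\sh{E} \otimes \omega^{-1}) = c_1(\sh{E}) - e K_X$, and the rank of $\sh{E} \otimes \omega^{-1}$ is again $e$, so one can substitute directly into Riemann-Roch. Expanding $\chi(\sh{F}, \sh{E} \otimes \omega^{-1})$ will produce the base term $\chi(\sh{F}, \sh{E})$ plus correction terms coming from the $-e K_X$ shift in $c_1$. Since $\sh{E}, \sh{F}$ is exceptional we have $\chi(\sh{F}, \sh{E}) = 0$, which kills the base contribution and leaves only the $K_X$-linear and $K_X^2$ terms.

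The main bookkeeping step is therefore tracking the correction terms. Writing $c_1(\sh{E} \otimes \omega^{-1}) = c_1(\sh{E}) - e K_X$ and feeding this into the Riemann-Roch expression, the quadratic-in-$c_1$ part contributes a term proportional to $(e K_X)^2 = e^2 K_X^2$ together with cross terms $c_1(\sh{E}) \cdot K_X$, while the $-\frac{1}{2} K_X c_1(\sh{F}, \sh{E} \otimes \omega^{-1})$ term contributes another piece linear in $K_X$. Using the difference formula $c_1(\sh{F}, \sh{E}) = f c_1(\sh{E}) - e c_1(\sh{F})$ from \ref{difflemma} and the antisymmetrization identity \ref{eulersym}(\ref{eulersymi}), namely $\chi(\sh{E}, \sh{F}) - \chi(\sh{F}, \sh{E}) = -K_X c_1(\sh{E}, \sh{F})$, the cross terms involving $c_1(\sh{E})$ and $c_1(\sh{F})$ should recombine into $\chi(\sh{E}, \sh{F})$ (recall $\chi(\sh{F}, \sh{E}) = 0$), and the remaining purely $K_X^2$ contribution collects to exactly $ef K_X^2$.

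The step I expect to be the main obstacle is this final recombination: one must confirm that every term not proportional to $K_X^2$ assembles precisely into $\chi(\sh{E}, \sh{F})$ with the correct sign, so that moving it to the left-hand side yields the clean sum $\chi(\sh{E}, \sh{F}) + \chi(\sh{F}, \sh{E} \otimes \omega^{-1}) = ef K_X^2$. This is purely a matter of careful algebra with the Riemann-Roch terms, but the signs and the factors of $\frac{1}{2}$ coming from the symmetrized and antisymmetrized pieces need to be handled with care. It may be cleaner to organize the computation by first establishing $\chi(\sh{F}, \sh{E} \otimes \omega^{-1}) = \chi(\sh{E}, \sh{F}) - K_X c_1(\sh{E}, \sh{F}) + ef K_X^2$ type intermediate identity via Serre duality applied to $\sh{E} \otimes \omega^{-1}$, and only then invoke $\chi(\sh{F}, \sh{E}) = 0$ to conclude.
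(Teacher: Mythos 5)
Your overall strategy---Serre duality for the first assertion, then Riemann--Roch antisymmetrization combined with $\chi(\sh{F},\sh{E})=0$ for the identity---is exactly the paper's strategy, but several of the concrete identities you write down are false, and they break the argument. Your Serre duality statement has the twist on the wrong side: from $\Hom(\sh{F},\sh{E}[k])\cong\Hom(\sh{E},\sh{F}\otimes\omega[2-k])^*$ one deduces $\Hom(\sh{F},\sh{E}\otimes\omega[k])\cong\Hom(\sh{E},\sh{F}[2-k])^*$, not the version with $\omega^{-1}$ that you state. Moreover, you check semi-orthogonality in the wrong direction: for $\sh{F},\,\sh{E}\otimes\omega^{-1}$ to be an exceptional pair one must show $\Hom(\sh{E}\otimes\omega^{-1},\sh{F}[k])=0$ for \emph{all} $k$ (Homs from the second member to the first), and this is what Serre duality actually delivers, via $\Hom(\sh{E}\otimes\omega^{-1},\sh{F}[k])=\Hom(\sh{E},\sh{F}\otimes\omega[k])\cong\Hom(\sh{F},\sh{E}[2-k])^*=0$. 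You instead claim that $\Hom(\sh{E},\sh{F}[2-k])$ vanishes for $k>0$ ``because $\sh{E},\sh{F}$ form an exceptional pair''; but exceptionality imposes no vanishing on Homs \emph{from} $\sh{E}$ \emph{to} $\sh{F}$ (on $\mathbb{P}^2$, $\Hom(\sh{O},\sh{O}(1))\neq 0$), and vanishing only for $k>0$ is not semi-orthogonality anyway. The same reversal infects the numerical claim: $\chi(\sh{F},\sh{E}\otimes\omega^{-1})=\chi(\sh{E},\sh{F})$ is false (for $\sh{E}=\sh{O}$, $\sh{F}=\sh{O}(1)$ on $\mathbb{P}^2$ the two sides are $6$ and $3$; were it true, the lemma would collapse to $2\chi(\sh{E},\sh{F})=efK_X^2$). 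The correct statement is $\chi(\sh{E}\otimes\omega^{-1},\sh{F})=\chi(\sh{F},\sh{E})=0$.

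The ``cleaner'' intermediate identity you propose at the end, $\chi(\sh{F},\sh{E}\otimes\omega^{-1})=\chi(\sh{E},\sh{F})-K_Xc_1(\sh{E},\sh{F})+efK_X^2$, is also false and in fact contradicts the lemma: since \ref{eulersym}~(\ref{eulersymi}) together with $\chi(\sh{F},\sh{E})=0$ gives $\chi(\sh{E},\sh{F})=-K_Xc_1(\sh{E},\sh{F})$, your identity would yield $\chi(\sh{E},\sh{F})+\chi(\sh{F},\sh{E}\otimes\omega^{-1})=3\chi(\sh{E},\sh{F})+efK_X^2$. The correct intermediate identity is $\chi(\sh{F},\sh{E}\otimes\omega^{-1})=\chi(\sh{F},\sh{E})+K_Xc_1(\sh{E},\sh{F})+efK_X^2$. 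Finally, your bookkeeping plan for the raw Riemann--Roch formula of \ref{RR} omits an essential ingredient: $c_2$ also changes under twist, $c_2(\sh{E}\otimes\omega^{-1})=c_2(\sh{E})-(e-1)c_1(\sh{E})K_X+\binom{e}{2}K_X^2$, and these terms are indispensable for the cancellation; you track only the shift in $c_1$. The paper's proof needs none of this: it applies the (rank-independent) antisymmetrization \ref{eulersym}~(\ref{eulersymi}) to the pair $\sh{F},\,\sh{E}\otimes\omega^{-1}$, uses
\begin{equation*}
c_1(\sh{F},\sh{E}\otimes\omega^{-1})=-c_1(\sh{E},\sh{F})-efK_X
\end{equation*}
together with the two vanishings $\chi(\sh{F},\sh{E})=\chi(\sh{E}\otimes\omega^{-1},\sh{F})=0$, and adds the two resulting equations---no $c_2$ ever appears, and the argument works for arbitrary ranks, including zero.
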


\begin{proof}
The first two assertions follow from Serre duality. For the last assertion, we use
\ref{eulersym} (\ref{eulersymi}) and $c_1(\sh{F}, \sh{E} \otimes \omega_X^{-1}) =
-c_1(\sh{E}, \sh{F}) - ef K_X$.
\end{proof}

\begin{sub}\label{additivity}
From \ref{difflemma}, for any three objects $\sh{E}, \sh{F}, \sh{G}$ we get:
\begin{equation*}
fc_1(\csh{E},\csh{G}) = g c_1(\csh{E},\csh{F}) + e c_1(\csh{F},\csh{G}).
\end{equation*}
If moreover these objects form a numerically exceptional triple, we can multiply both sides
of this equation with $-K_X$ and then with \ref{eulersym} (\ref{eulersymi}) the equality
extends to the Euler characteristic:
\begin{equation*}
f \chi(\csh{E}, \csh{G}) = g \chi(\csh{E}, \csh{F}) +e \chi(\csh{F}, \csh{G}).
\end{equation*}
\end{sub}

\begin{proposition}\label{intersectionprop}
Let $\csh{E}, \csh{F}, \csh{G}$ in $D^b(X)$ with $e, f, g \neq 0$ such that $\sh{E}, \sh{F}$
and $\sh{F}, \sh{G}$ form numerically exceptional pairs. Then $\sh{E}, \sh{F}, \sh{G}$ forms
a numerically exceptional triple
(i.e. $\chi(\csh{G}, \csh{E}) = 0$) if and only if $c_1(\csh{E}, \csh{F})
\cdot c_1(\csh{F}, \csh{G}) = eg$.
\end{proposition}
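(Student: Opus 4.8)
The plan is to produce a single rank-weighted linear identity that expresses $\chi(\sh{G}, \sh{E})$ directly in terms of the intersection number $c_1(\sh{E}, \sh{F}) \cdot c_1(\sh{F}, \sh{G})$, after which the equivalence is immediate because $f \neq 0$. Since all three objects are (numerically) exceptional and have nonzero rank, the Riemann-Roch formula \ref{eulerchar2} (\ref{eulerchar2i}) applies to each of the three ``backward'' pairs $\sh{G}, \sh{E}$, $\sh{G}, \sh{F}$ and $\sh{F}, \sh{E}$. I would expand each of these and then form the combination
\[
f \chi(\sh{G}, \sh{E}) - e \chi(\sh{G}, \sh{F}) - g \chi(\sh{F}, \sh{E}).
\]
The point of choosing exactly the coefficients $(f, -e, -g)$ is that its $K_X$-linear part is $-\tfrac12 K_X \cdot \big(f\, c_1(\sh{G}, \sh{E}) - e\, c_1(\sh{G}, \sh{F}) - g\, c_1(\sh{F}, \sh{E})\big)$, and the bracket vanishes identically by the Chern-class additivity of \ref{additivity}, applied to the triple $\sh{G}, \sh{F}, \sh{E}$: namely $f\, c_1(\sh{G}, \sh{E}) = e\, c_1(\sh{G}, \sh{F}) + g\, c_1(\sh{F}, \sh{E})$. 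Thus the transcendental $K_X$-terms cancel and only the quadratic Riemann-Roch terms survive.

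Next I would simplify the quadratic remainder. Clearing denominators, it equals $\tfrac{1}{2efg}$ times $f^2 c_1(\sh{G}, \sh{E})^2 - e^2 c_1(\sh{G}, \sh{F})^2 - g^2 c_1(\sh{F}, \sh{E})^2$ together with the rank-square contributions, of which the $e^2f^2$ and $f^2g^2$ terms cancel while two copies of $-e^2g^2$ remain. Substituting $f^2 c_1(\sh{G}, \sh{E})^2 = \big(e\, c_1(\sh{G},\sh{F}) + g\, c_1(\sh{F},\sh{E})\big)^2$ from the same additivity relation cancels the pure squares $e^2 c_1(\sh{G},\sh{F})^2$ and $g^2 c_1(\sh{F},\sh{E})^2$, leaving only the cross term $2eg\, c_1(\sh{G},\sh{F}) \cdot c_1(\sh{F},\sh{E})$ against $-2e^2g^2$. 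Since $c_1(\sh{G},\sh{F}) = -c_1(\sh{F},\sh{G})$ and $c_1(\sh{F},\sh{E}) = -c_1(\sh{E},\sh{F})$ by \ref{difflemma}, the whole combination collapses to
\[
f \chi(\sh{G},\sh{E}) - e \chi(\sh{G},\sh{F}) - g \chi(\sh{F},\sh{E}) = \frac{c_1(\sh{E},\sh{F}) \cdot c_1(\sh{F},\sh{G}) - eg}{f}.
\]

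Finally I would invoke the two pair hypotheses. Because $\sh{E}, \sh{F}$ and $\sh{F}, \sh{G}$ are exceptional pairs, the backward Euler characteristics $\chi(\sh{F}, \sh{E})$ and $\chi(\sh{G}, \sh{F})$ both vanish, so the left-hand side reduces to $f \chi(\sh{G}, \sh{E})$. As $f \neq 0$, this gives $\chi(\sh{G},\sh{E}) = \big(c_1(\sh{E},\sh{F}) \cdot c_1(\sh{F},\sh{G}) - eg\big)/f^2$, whence $\chi(\sh{G},\sh{E}) = 0$ if and only if $c_1(\sh{E},\sh{F}) \cdot c_1(\sh{F},\sh{G}) = eg$. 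I do not expect a genuine conceptual obstacle; the only delicate point is the bookkeeping in the second step — verifying that the coefficients $(f,-e,-g)$ force both the $K_X$-terms and the pure-square terms to cancel at once, which is precisely what makes the quadratic remainder collapse to the single intersection number $c_1(\sh{E},\sh{F}) \cdot c_1(\sh{F},\sh{G})$.
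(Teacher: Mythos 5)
Your proof is correct and is essentially the paper's own argument: both rest on the Riemann--Roch form \ref{eulerchar2} (\ref{eulerchar2i}) for the three backward pairs, the weighted additivity $f\,c_1(\sh{G},\sh{E}) = e\,c_1(\sh{G},\sh{F}) + g\,c_1(\sh{F},\sh{E})$ from \ref{additivity}, and the vanishing of $\chi(\sh{G},\sh{F})$ and $\chi(\sh{F},\sh{E})$. Your combination $f\chi(\sh{G},\sh{E}) - e\chi(\sh{G},\sh{F}) - g\chi(\sh{F},\sh{E})$ is just the paper's rearrangement of $\chi(\sh{G},\sh{E})$ multiplied through by $f$, so the two computations coincide term by term.
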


\begin{proof}
By equation \ref{eulerchar2} (\ref{eulerchar2i}) we have
\begin{equation*}
\chi(\csh{G}, \csh{E}) = \frac{-K_X}{2}c_1(\csh{G}, \csh{E}) + \frac{1}{2eg}(c_1(\csh{G}, \csh{E})^2
+ e^2 + g^2).
\end{equation*}
Using \ref{difflemma} and \ref{eulersym} we
get:
\begin{align*}
& \chi(\csh{G}, \csh{E}) = \\
& \frac{-K_X}{2f}\big(e c_1(\csh{G}, \csh{F}) + g c_1(\csh{F}, \csh{E}) \big)
+ \frac{1}{2 e f^2 g}\big(e^2 c_1(\csh{G}, \csh{F})^2 + g^2 c_1(\csh{F}, \csh{E})^2
+ 2 eg c_1(\csh{G}, \csh{F}) \cdot c_1(\csh{F}, \csh{E})\big) + \frac{e^2 + g^2}{2eg} =\\
& \frac{e}{f}(\chi(\csh{G}, \csh{F}) - \frac{f^2 + g^2}{2fg}) + \frac{g}{f}(\chi(\csh{F}, \csh{E})
- \frac{e^2 + f^2}{2ef}) + \frac{1}{f^2}c_1(\csh{G}, \csh{F}) \cdot c_1(\csh{F}, \csh{E}) +
\frac{e^2 + g^2}{2eg} =\\
& \frac{1}{f^2}(-eg + c_1(\csh{G}, \csh{F}) \cdot c_1(\csh{F}, \csh{E})).
\end{align*}
Hence, we get $\chi(\csh{G}, \csh{E}) = 0$ iff $c_1(\csh{G}, \csh{F}) \cdot c_1(\csh{F}, \csh{E})
= c_1(\csh{E}, \csh{F}) \cdot c_1(\csh{F}, \csh{G}) = eg$.
\end{proof}

\begin{remark*}
We point out that for the case that $\sh{E}, \sh{F}$ and $\sh{F}, \sh{G}$ are proper exceptional pairs,
Proposition \ref{intersectionprop} yields only a necessary, but not sufficient criterion for
$\sh{E}, \sh{F}, \sh{G}$ to form a proper exceptional triple.
\end{remark*}

\begin{sub}\label{mutlemma}
For a numerically exceptional triple $\sh{E}, \sh{F}, \sh{G}$, the Chern classes and Euler
characteristic transform for right mutation as follows:
\begin{align*}
c_1(\sh{F}, R_\sh{F}\sh{E}) & = c_1(\sh{E}, \sh{F}), \quad
c_1(R_\sh{F}\sh{E}, \sh{G}) = \chi(\sh{E}, \sh{F}) c_1(\sh{F}, \sh{G}) -
c_1(\sh{E}, \sh{G})\\
\chi(\sh{F}, R_\sh{F}\sh{E}) & = \chi(\sh{E}, \sh{F}), \quad
\chi(R_\sh{F}\sh{E}, \sh{G}) = \chi(\sh{E}, \sh{F}) \chi(\sh{F}, \sh{G}) -
\chi(\sh{E}, \sh{G}).
\end{align*}
Similarly, for left mutation, we get:
\begin{align*}
c_1(L_\sh{F}\sh{G}, \sh{F}) & = c_1(\sh{F}, \sh{G}), \quad
c_1(\sh{E}, L_\sh{F}\sh{G}) = \chi(\sh{F}, \sh{G}) c_1(\sh{E}, \sh{F}) -
c_1(\sh{E}, \sh{G})\\
\chi(L_\sh{F}\sh{G}, \sh{F}) & = \chi(\sh{F}, \sh{G}), \quad
\chi(\sh{E}, L_\sh{F}\sh{G}) = \chi(\sh{F}, \sh{G}) \chi(\sh{E}, \sh{F}) -
\chi(\sh{E}, \sh{G}).
\end{align*}
If $e, f \neq 0$, we have by \ref{eulersym} (\ref{eulersymvi}) that
$\chi(\sh{E}, \sh{F}) = \frac{1}{ef}(a + e^2 + f^2)$ where
$a = c_1(\sh{E}, \sh{F})^2$, and the rank formulas of \ref{mutationrank} specialize as follows:
\begin{equation*}
\rk L_\sh{E}\sh{F} = \frac{a + e^2}{f} \quad \text{ and } \quad
\rk R_\sh{F}\sh{E} = \frac{a + f^2}{e}.
\end{equation*}
\end{sub}

\section{Exceptional Sequences containing objects of rank zero}\label{rankzerosection}

As we have seen in paragraphs \ref{eulerchar1} and \ref{eulersym}, the Riemann-Roch formula
does not lead to a uniform treatment of exceptional objects of rank zero as it does for objects
of nonzero rank. Indeed, one should think of such objects as associated to exceptional
divisors of blow-ups, as indicated by a classical construction due to Orlov.

\begin{example}\label{blowupexample}
Let $b: \tilde{X} \rightarrow X$ be a blow-up of a point with exceptional divisor $E$ and let
$\sh{E}_1, \dots, \sh{E}_n$ be a full exceptional sequence on $X$. Then by
\cite{Orlov93}, $\sh{O}_E(E), \mathbf{L}b^*\sh{E}_1, \dots, \mathbf{L}b^*\sh{E}_n$
is a full exceptional sequence on $\tilde{X}$. Clearly, $c_2(\sh{O}_E(E)) = 0$
and $c_1(\sh{O}_E(E)) c_1(\mathbf{L}b^*\sh{E}_i) = 0$ for all $i$. It follows from \ref{eulersym}
(\ref{eulersymiii}) that $\chi(\sh{O}_E(E), \mathbf{L}b^*\sh{E}_i) = -e_i$ for every $i$.
\end{example}

\begin{sub}
A distinctive feature of this example is that the first Chern class of $\sh{O}_E(E)$ is
orthogonal to the first Chern classes of the rest of the sequence and that its second Chern
class is zero. We will see (Theorem \ref{deltatheorem}) that, possibly after twisting with
a line bundle, this is always true for rank zero objects in an exceptional sequence of
maximal length. However, as we can see by formulas \ref{eulersym}, this is not an immediate
consequence of the Riemann-Roch formula and we have not yet developed enough machinery to
prove this fact. In this section we will describe some general features of exceptional
objects of rank zero and their semi-orthogonal complements sufficient to motivate and
state Theorem \ref{deltatheorem},
but we will only be able to prove the theorem in Section \ref{globalsection}.
\end{sub}

Let $\sh{E}_1, \dots, \sh{E}_n$ be an exceptional sequence and denote
$\sh{Z}_1, \dots, \sh{Z}_t$ the maximal sub-sequence consisting of objects of rank
zero. The following lemma shows that the $\sh{Z}_i$ can never represent a semi-orthogonal
basis of $\knum(X)$.

\begin{lemma}\label{rankzerosequences}
Under above assumptions we have $t \leq n - 3$.
\end{lemma}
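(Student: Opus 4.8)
The plan is to translate the numerical semi-orthogonality conditions on the rank-zero subsequence $\sh{Z}_1, \dots, \sh{Z}_t$ into a statement about the intersection lattice $\Ch^1(X)$, and then to invoke the Hodge index theorem. Concretely, I want to show that the first Chern classes $v_i := c_1(\sh{Z}_i) \in \Ch^1(X)$ span a negative-definite subspace of dimension $t$, which forces $t$ to be bounded by the negative index of inertia of the intersection form.

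First I would record the Chern-class data forced by Riemann-Roch. Since each $\sh{Z}_i$ is numerically exceptional of rank zero, paragraph \ref{eulerchar1} (\ref{eulerchar1i}) gives $v_i^2 = c_1(\sh{Z}_i)^2 = -1$. Next I claim the $v_i$ are pairwise orthogonal: for $i > j$ the semi-orthogonality of the sequence gives $\chi(\sh{Z}_i, \sh{Z}_j) = 0$, and because both ranks vanish, \ref{eulersym} (\ref{eulersymv}) shows $\chi(\sh{Z}_i, \sh{Z}_j) = \chi(\sh{Z}_j, \sh{Z}_i)$, so both Euler characteristics vanish and $\chi(\sh{Z}_i, \sh{Z}_j) + \chi(\sh{Z}_j, \sh{Z}_i) = -2\, v_i v_j = 0$. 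Hence $v_i v_j = 0$ for all $i \neq j$.

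I would then analyze $W := \spann_\R \{v_1, \dots, v_t\} \subseteq \Ch^1(X) \otimes \R$. For $w = \sum_i a_i v_i$ the orthogonality together with $v_i^2 = -1$ yields $w^2 = -\sum_i a_i^2$, which is $\leq 0$ and vanishes only if all $a_i = 0$. Thus the $v_i$ are linearly independent and $W$ is a negative-definite subspace of dimension $t$. Finally I would compare this with the signature of the intersection form on $\Ch^1(X) \otimes \R \cong \R^{n-2}$: since a smooth complete surface is projective, there is an ample class $H$ with $H^2 > 0$, so the positive index of inertia is at least $1$ (and, by the Hodge index theorem, exactly $1$, the signature being $(1, n-3)$). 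By Sylvester's law of inertia the negative index $n_-$ satisfies $n_- \geq \dim W = t$ while the positive index $n_+ \geq 1$, and $n_+ + n_- \leq n - 2$. Therefore $t + 1 \leq n - 2$, i.e. $t \leq n - 3$.

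The crux is not any single computation but the translation itself: recognizing that the purely numerical conditions ``$\chi(\sh{Z}_i,\sh{Z}_j)=0$ for $i>j$'' and ``$\chi(\sh{Z}_i,\sh{Z}_i)=1$'' encode, via the Riemann-Roch identities of \ref{eulerchar1} and \ref{eulersym}, exactly the lattice-theoretic constraint ``pairwise orthogonal classes of self-intersection $-1$.'' Once this is seen, the bound is an immediate consequence of the Hodge index theorem. The one point requiring a little care is ensuring orthogonality in \emph{both} orderings, which is precisely where the symmetry $\chi(\sh{Z}_i,\sh{Z}_j)=\chi(\sh{Z}_j,\sh{Z}_i)$ for rank-zero objects in \ref{eulersym} (\ref{eulersymv}) is indispensable.
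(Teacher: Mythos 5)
Your proof is correct and follows exactly the paper's own argument: both use \ref{eulerchar1} (\ref{eulerchar1i}) and \ref{eulersym} (\ref{eulersymv}) to show the classes $c_1(\sh{Z}_i)$ form an orthogonal system of vectors of self-intersection $-1$ in $\Ch^1(X)$, and then conclude $t \leq n-3$ from the Hodge index theorem. The only difference is that you spell out the signature and linear-independence details that the paper leaves implicit.
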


\begin{proof}
By Lemmas \ref{eulerchar1} (\ref{eulerchar1i}) and \ref{eulersym} (\ref{eulersymv}),
the Chern classes $c_1(\sh{Z}_i)$ form an orthogonal system of vectors of length
$-1$ in $\chnum^1(X)$. Then $t \leq n - 3 = \rk \chnum^1(X) - 1$ by the Hodge index theorem.
\end{proof}

Given a divisor $D$, we can consider the twisted sequence $\sh{E}_1(D), \dots, \sh{E}_n(D)$.
For the sub-sequence of the $\sh{Z}_i$, we observe the following:

\begin{lemma}\label{shiftlemma}
Let $m_1, \dots, m_t$ be any integers. Then with above notation, there exists a divisor $D$
such that $c_2(\sh{Z}_i(D)) = m_i$ for every $1 \leq i \leq t$.
\end{lemma}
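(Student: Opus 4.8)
The plan is to reduce the statement to a linear system for the divisor class $D$ and to solve it using the orthogonality of the classes $c_1(\sh{Z}_i)$ established in the proof of Lemma~\ref{rankzerosequences}.

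First I would record how the Chern classes of a rank-zero object behave under a twist. For any divisor $D$ one has $\ch(\sh{Z}_i(D)) = \ch(\sh{Z}_i)\cdot\ch(\sh{O}(D))$ with $\ch(\sh{O}(D)) = 1 + D + \tfrac12 D^2$. Reading off the graded pieces in the rank-zero case gives $c_1(\sh{Z}_i(D)) = c_1(\sh{Z}_i)$ together with
\[
c_2(\sh{Z}_i(D)) = c_2(\sh{Z}_i) - c_1(\sh{Z}_i)\cdot D .
\]
Hence the desired vanishing $c_2(\sh{Z}_i(D)) = 0$ for all $i$ is exactly the linear system $c_1(\sh{Z}_i)\cdot D = c_2(\sh{Z}_i)$, $1\le i\le t$, in the unknown $D\in\Ch^1(X)$.

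Next I would invoke the structural input already used for Lemma~\ref{rankzerosequences}: by \ref{eulerchar1}~(\ref{eulerchar1i}) and \ref{eulersym}~(\ref{eulersymv}) the classes $c_1(\sh{Z}_1),\dots,c_1(\sh{Z}_t)$ are mutually orthogonal with $c_1(\sh{Z}_i)^2 = -1$. Writing the candidate $D = \sum_{j} a_j\, c_1(\sh{Z}_j)$ and using orthogonality, one computes $c_1(\sh{Z}_i)\cdot D = -a_i$, so the system is solved by $a_i = -c_2(\sh{Z}_i)$, i.e.
\[
D = -\sum_{i=1}^{t} c_2(\sh{Z}_i)\, c_1(\sh{Z}_i).
\]
With this choice $c_1(\sh{Z}_i)\cdot D = c_2(\sh{Z}_i)$, and therefore $c_2(\sh{Z}_i(D)) = 0$ for every $i$, as required.

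I do not expect a serious obstacle: once the twist formula is in place the problem is a linear system whose solvability is immediate from the orthogonality of the $c_1(\sh{Z}_i)$. The only point deserving a moment's attention is that $D$ be an honest integral divisor class and not merely a $\Q$-divisor; this is automatic, since the self-intersections are $-1$ (so no denominators arise) and each $c_2(\sh{Z}_i)\in\Ch^2(X)\cong\Z$, so that $D$ is an integral combination of the $c_1(\sh{Z}_i)\in\Ch^1(X)$.
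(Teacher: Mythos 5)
Your proof is correct and follows essentially the same route as the paper: the multiplicativity of the Chern character gives $c_2(\sh{Z}_i(D)) = c_2(\sh{Z}_i) - c_1(\sh{Z}_i)\cdot D$, and the orthogonality of the $c_1(\sh{Z}_i)$ (with $c_1(\sh{Z}_i)^2 = -1$) from the proof of Lemma~\ref{rankzerosequences} solves the resulting linear system. The only difference is cosmetic: you exhibit the explicit solution $D = -\sum_i c_2(\sh{Z}_i)\,c_1(\sh{Z}_i)$ and check its integrality, whereas the paper simply asserts that such a $D$ exists.
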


\begin{proof}
By the multiplicative property of the Chern character we have $\ch(\sh{Z}_i(D)) =
\ch(\sh{Z}_i) \cdot \ch(\sh{O}(D))$ for every $i$. From this we compute that
$c_2(\sh{Z}_i(D)) = c_2(\sh{Z}_i) - c_1(\sh{Z}_i) D$. As we have observed in the
proof of Lemma \ref{rankzerosequences}, the $c_1(\sh{Z}_i)$ form an orthogonal set of
vectors of length $-1$ with respect to the intersection form. Hence, the divisor
$D = -\sum_{i = 1}^t (c_2(\sh{Z}_i) - m_i) c_1(\sh{Z}_i)$ satisfies
$c_1(\sh{Z}_i) D = c_2(\sh{Z}_i) - m_i$ for all $i$.
\end{proof}

Now consider any exceptional object $\sh{Z}$ of rank zero. We want to describe the
relative configurations of the left- and right-orthogonal complements in $\knum(X)$.

\begin{sub}\label{leftrighteuler}
Both $\chi(-, \sh{Z})$ and $\chi(\sh{Z}, -)$ induce linear forms on $\knum(X)$ and with respect
to these forms we denote $\mathbf{L}$ and $\mathbf{R}$ the left- and right-orthogonal complements of
$\sh{Z}$ in $\knum(X)$, respectively. By the integrality of the Euler form and the fact that
$\chi(\sh{Z}, \sh{Z}) = 1$ it follows that $\mathbf{L}$ and $\sh{Z}$ (respectively, $\mathbf{R}$
and $\sh{Z})$ generate $\knum(X)$.
Furthermore, because the first Chern class is additive on
complexes, we obtain another linear form on $\knum(X)$ which is given as $$\phi_\sh{Z}: \knum(X)
\longrightarrow \Z, \quad \sh{E} \mapsto c_1(\sh{Z}) c_1(\sh{E}).$$ We denote its orthogonal complement
in $K_0(X)$ by $\mathbf{C}$. Note that because $\sh{Z}$ is primitive in $\knum(X)$ and $\chi(\sh{Z},
\sh{Z}) = 1 = -c_1(\sh{Z})^2$ it follows that all these forms are integral and primitive in $\knum(X)^*$.
By a result of Thomason \cite[Theorem 2.1]{Thomason97}, the subgroups of $K_0(X)$ correspond precisely
to the full dense triangulated subcategories of $D^b(X)$. In particular, we denote by $\mathfrak{L}$
and $\mathfrak{R}$ those subcategories which correspond to the preimages of $\mathbf{L}$ and
$\mathbf{R}$ in $K_0(X)$, respectively. For any object $\sh{E}$ of $\mathfrak{L}$ we have
by \ref{eulersym} (\ref{eulersymiii}), (\ref{eulersymv}):
$$
\chi(\sh{Z}, \sh{E}) =
\begin{cases}
0 & \text{if } e = 0,\\
-e(1 + 2 c_1(\sh{Z}) s(\sh{E}) + 2 c_2(\sh{Z})) & \text{otherwise}.
\end{cases}
$$
Similarly, for any object $\sh{F}$ of $\mathfrak{R}$ we have:
$$
\chi(\sh{F}, \sh{Z}) =
\begin{cases}
0 & \text{if } f = 0,\\
-f(1 + 2 c_1(\sh{Z}) s(\sh{F}) + 2 c_2(\sh{Z})) & \text{otherwise}.
\end{cases}
$$
\end{sub}

\begin{lemma}\label{zerointersectionequal}
Let $\sh{E}, \sh{E}' \in \mathfrak{L}$ and $\sh{F}, \sh{F}' \in \mathfrak{R}$.
Then with above notation, we have $c_1(\sh{Z}) s(\sh{E}) = c_1(\sh{Z}) s(\sh{E}')$ and
$c_1(\sh{Z}) s(\sh{F}) = c_1(\sh{Z}) s(\sh{F}')$ whenever $e, e', f, f' \neq 0$.
\end{lemma}

\begin{proof}
We have $c_1(\sh{Z}, \sh{E}) = -e c_1(\sh{Z})$ for any $\sh{E}$ in $\mathfrak{L}$ and hence by
Lemma \ref{eulersym} (\ref{eulersymi}), we get $e \chi(\sh{Z}, \sh{E}') =
e' \chi(\sh{Z}, \sh{E})$. Then the assertion follows from
\ref{eulersym} (\ref{eulersymiii}). The statement for $\sh{F}$ and $\sh{F}'$ follows
analogously.
\end{proof}

\begin{definition}\label{defectdef}
With above notation, for some $\sh{E} \in \mathfrak{L}$, $\sh{F} \in \mathfrak{R}$ with $e, f \neq 0$,
we denote
\begin{align*}
\delta_\sh{Z} & := c_1(\sh{Z}) s(\sh{E}) + c_2(\sh{Z}),\\
\varepsilon_\sh{Z} &:= c_1(\sh{Z}) s(\sh{F}) + c_2(\sh{Z}).
\end{align*}
\end{definition}

By Lemma \ref{zerointersectionequal}, $\delta_\sh{Z}$ and $\varepsilon_\sh{Z}$ are independent
of the choice of $\sh{E}$ and $\sh{F}$.

\begin{lemma}\label{chirank}
\begin{enumerate}[(i)]
\item\label{chiranki} We have $K_X c_1(\sh{Z}) = -(1 + 2 \delta_\sh{Z}) = 1 + 2 \varepsilon_\sh{Z}$ and
therefore $\delta_\sh{Z} + \varepsilon_\sh{Z} + 1 = 0$.
\item\label{chirankiii} The restriction of $\chi(\sh{Z}, -)$ to $\mathbf{L}$ coincides
with $-(1 + 2 \delta_\sh{Z})$ times the rank function.
\item\label{chirankiv} The restriction of $\chi(-, \sh{Z})$ to $\mathbf{R}$ coincides
with $-(1 + 2 \varepsilon_\sh{Z})$ times the rank function.
\item\label{chirankii} The following formulas hold for any object $\sh{G}$ of $D^b(X)$:
\begin{align*}
\chi(\sh{Z}, \sh{G}) + c_1(\sh{G}) c_1(\sh{Z}) + g c_2(\sh{Z}) &
= -g (1 + \delta_\sh{Z}) = g \varepsilon_\sh{Z},\\
\chi(\sh{G}, \sh{Z}) + c_1(\sh{G}) c_1(\sh{Z}) + g c_2(\sh{Z}) &
= g \delta_\sh{Z} = -g (1 + \varepsilon_\sh{Z}).
\end{align*}
In particular, by taking a rank one object for $\sh{G}$ we see that both $\delta_\sh{Z}$ and $\varepsilon_\sh{Z}$ are integers.
\end{enumerate}
\end{lemma}

\begin{proof}
(\ref{chiranki})
For any $\sh{E} \in \mathfrak{L}$, $\sh{F} \in \mathfrak{R}$ with $e, f \neq 0$ follows from
\ref{eulersym} (\ref{eulersymi}), (\ref{eulersymiii}), (\ref{eulersymiv}) that
$\chi(\sh{Z}, \sh{E}) = e K_X c_1(\sh{Z}) = -e(1 + 2 \delta_\sh{Z})$ and
$\chi(\sh{F}, \sh{Z}) = -f K_X c_1(\sh{Z}) = -f(1 + 2 \varepsilon_\sh{Z}).$

(\ref{chirankiii})
For any $\sh{E} \in \mathfrak{L}$, we have $\chi(\sh{Z}, \sh{E}) = - e (1 + 2 \delta_\sh{Z})$ by
\ref{leftrighteuler}. As the restriction of the rank function to
$\mathbf{L}$ then is completely determined by these values, the assertion follows.

(\ref{chirankiv})
For any $\sh{F} \in \mathfrak{R}$, we have $\chi(\sh{F}, \sh{Z}) = -f (1 + 2 \delta_\sh{Z})$ by
\ref{leftrighteuler} and we conclude as in (\ref{chirankiii}).

(\ref{chirankii})
From \ref{RR} and (\ref{chiranki}) we get immediately $\chi(\sh{Z}, \sh{G}) + c_1(\sh{G}) c_1(\sh{Z})
+ g c_2(\sh{G}) = \frac{g}{2}K_X c_1(\sh{Z}) - \frac{g}{2} = -g(1 + \delta_\sh{Z})$
and $\chi(\sh{G}, \sh{Z}) + c_1(\sh{G}) c_1(\sh{Z}) + g c_2(\sh{G}) = - \frac{g}{2}K_X c_1(\sh{Z}) -
\frac{g}{2} = g \delta_\sh{Z} = -g (1 + \varepsilon_\sh{Z})$.
\end{proof}

\begin{sub}\label{spread}
Let $\sh{G}$ be an object in $\mathfrak{L} \cap \mathfrak{R}$, then the left hand sides of
both equations in Lemma \ref{chirank} (\ref{chirankii}) coincide and, by the integrality of
$\delta_\sh{Z}$, the right hand sides can only be equal if $g = 0$. Then
from the existence of objects of nonzero rank it follows that $\mathbf{L} \cap \mathbf{R}$
and $[\sh{Z}]$ cannot generate $\knum(X)$. Moreover, both $\mathbf{L}$ and $\mathbf{R}$ are
saturated sublattices of corank $1$ in $\knum(X)$. It follows that $\mathbf{L} \neq \mathbf{R}$
and $\mathbf{L} \cap \mathbf{R}$ is a saturated sublattice of corank two in $\knum(X)$ consisting
of objects of rank zero. Furthermore, it follows that the linear forms $\chi(-, \sh{Z})$ and
$\chi(\sh{Z}, -)$ are linearly independent.

If we denote $L := \knum(X) / \mathbf{L} \cap \mathbf{R} \simeq \Z^2$ it follows immediately that
$\chi(-, \sh{Z})$ and $\chi(\sh{Z}, -)$ descend to linearly independent linear forms on $L$.
Moreover, as $\rk(\mathbf{L} \cap \mathbf{R}) = 0$, the rank function descends as well, as does
$\phi_\sh{Z}$ by Lemma \ref{chirank} (\ref{chirankii}).

Another consequence of Lemma \ref{chirank} (\ref{chirankii}) is that the intersection products
$c_1(\sh{Z}) s(\sh{E})$ and $c_1(\sh{Z}) s(\sh{F})$ are integral for $\sh{E} \in \mathfrak{L}$
and $\sh{F} \in \mathfrak{R}$, respectively. So, noting that $c_1(\sh{Z}(D)) = c_1(\sh{Z})$,
$\delta_{\sh{Z}(D)} = \delta_\sh{Z}$, and $\varepsilon_{\sh{Z}(D)} = \varepsilon_\sh{Z}$ for
any divisor $D$, we can choose by Lemma \ref{shiftlemma} a divisor $D$ such that either
$\mathbf{L} = \ker \phi_{\sh{Z}(D)}$ or $\mathbf{R} = \ker \phi_{\sh{Z}(D)}$. So, up to
twist by an invertible sheaf we may assume without loss of generality that, say, $\chi(-, \sh{Z}) =
-\phi_\sh{Z}$ and, in particular, $\sh{O} \in \mathfrak{L}$, $\omega \in \mathfrak{R}$.
Figure \ref{rankzerofigure} shows the configurations of $\mathbf{L}$ and
$\mathbf{R}$ in $L$
for the case $\delta_\sh{Z} = c_2(\sh{Z}) = 2$.

\begin{figure}[h]
\begin{tikzpicture}[axis/.style={very thick, ->, >=stealth'}]
\draw[axis] (0,0)  -- (1,0) node(xline)[right] {$\sh{Z}$};
\draw[axis] (0,0)  -- (0,1) node(yline)[left] {$\sh{O}$};
\draw (0,-1) -- (0,2) node [left] {$\mathbf{L}$};
\draw (-5,-1) -- (7.5, 1.5) node [above]{\footnotesize $\mathbf{R}$};
\draw (1,1) node[circle,fill,inner sep=1pt,label=above:$\sh{O}(c_1(\sh{Z}))$]{};
\draw (2,1) node[circle,fill,inner sep=1pt]{};
\draw (3,1) node[circle,fill,inner sep=1pt]{};
\draw (4,1) node[circle,fill,inner sep=1pt]{};
\draw (5,1) node[circle,fill,inner sep=1pt,label=above:$\omega$]{};
\draw (-1,1) node[circle,fill,inner sep=1pt]{};
\draw (-2,1) node[circle,fill,inner sep=1pt]{};
\draw (-3,1) node[circle,fill,inner sep=1pt]{};
\draw (-4,1) node[circle,fill,inner sep=1pt]{};
\draw (-5,1) node[circle,fill,inner sep=1pt,label=above:$\omega^{-1}$]{};
\end{tikzpicture}
\caption{The configuration of $\mathbf{L}$ and $\mathbf{R}$ in $L$ for $\delta_\sh{Z} = 2$.}
\label{rankzerofigure}
\end{figure}
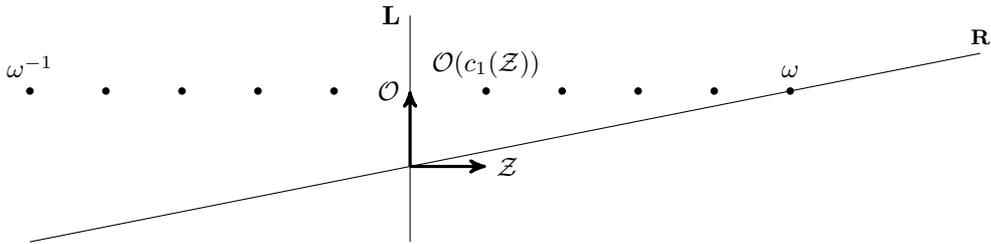

Here, $\sh{Z}$ and $\sh{O}$ represent a basis of $L$ whose dual is naturally given by
$\chi(-, \sh{Z}) = -\phi_\sh{Z}$ and $\rk$. In $L$, the classes of rank one objects have
coordinates $\sh{O} + k \sh{Z}$ which can be represented by line bundles $\sh{O}(k c_1(\sh{Z}))$.
In particular, $\omega \sim \sh{O}((1 + 2\delta_\sh{Z}) c_1(\sh{Z}))$.
With the relation $\delta_\sh{Z} + \varepsilon_\sh{Z} + 1
= 0$ and $\delta_\sh{Z} = c_2(\sh{Z})$ we moreover observe with \ref{shiftchern}:
$$
\varepsilon_\sh{Z} = \delta_{\sh{Z}[1]} \text{ and } \delta_\sh{Z} = \varepsilon_{\sh{Z}[1]}.
$$
\end{sub}

\begin{sub}
We are interested in the particular situation, where $\sh{Z}$ is part of an exceptional sequence
of the form, say, $\sh{Z}, \sh{E}_2, \dots, \sh{E}_n$ where we can assume without loss of generality
that $c_1(\sh{Z}) c_1(\sh{E}_i) = 0$ for any $i$. Then the classes of $\sh{E}_2, \dots, \sh{E}_n$ in
$\knum(X)$ form a semiorthogonal basis of $\mathbf{L}$ and $\sh{E}_2 \otimes \omega, \dots,
\sh{E}_n \otimes \omega$ is a semiorthogonal basis of $\mathbf{R}$ Alternatively, by $n - 1$ left
mutations we can move $\sh{Z}$ to the
rightmost end of a sequence $\sh{F}_1, \dots, \sh{F}_{n - 1}, \sh{Z}$ and obtain another semiorthogonal
basis
$\sh{F}_1, \dots, \sh{F}_{n - 1}$ of $\mathbf{R}$.

In any case, $\delta_\sh{Z} = c_2(\sh{Z})$ represents the ``spread'' of $\mathbf{L}$ and $\mathbf{R}$
in $L$. In Example \ref{blowupexample}, we have seen that $\delta_{\sh{O}_E(E)} = 0$ holds and
therefore $\mathbf{L}$ and $\mathbf{R}$ generate $\knum(X)$. It turns out that this indeed is a
general feature of exceptional sequences, as the following theorem shows, which, however, we cannot
yet prove.

\begin{theorem}\label{deltatheorem}
Let $\sh{Z}$ be an exceptional object of rank zero which can be included in an exceptional sequence
$\sh{Z}, \sh{E}_2, \dots, \sh{E}_n$. Then $\delta_\sh{Z} \in \{0, -1\}$.
\end{theorem}

The proof will be postponed until Section \ref{globalsection}. Until then we will have to take a
defect $\delta_\sh{Z}$ into account whenever an exceptional rank zero object $\sh{Z}$ is part of our
exceptional sequence. However, once Theorem \ref{deltatheorem} is established, the following
corollary shows that we can essentially forget about them.

\begin{corollary}\label{deltaz}
\begin{enumerate}[(i)]
\item\label{deltaziii}
If $\delta_\sh{Z} = 0$ then $\chi(\sh{Z}[1], -)$ coincides with the rank
function on $\mathbf{L}$.
If $\delta_\sh{Z} = -1$ then $\chi(-, \sh{Z}[1])$ coincides with the rank function on $\mathbf{R}$.
\item\label{deltaziv} If $\sh{Z}, \sh{E}_2, \dots, \sh{E}_n$ is an exceptional sequence then
so is $\sh{Z}[\epsilon](D), \sh{E}_2(D), \dots, \sh{E}_n(D)$ for any divisor $D$ and $\epsilon \in \Z$.
In particular, we can choose $D$ and $\epsilon
\in \{0, 1\}$ such that $c_2(\sh{Z}[\epsilon](D)) = \delta_{\sh{Z}[\epsilon](D)} = 0$ and
$-c_1(\sh{Z}[\epsilon](D)) K_X = 1$.
\end{enumerate}
\end{corollary}
\end{sub}

\section{Toric systems}\label{toricsystemsection}

In \cite{HillePerling11}, exceptional sequences of invertible sheaves $\sh{O}(D_1), \dots,
\sh{O}(D_n)$ have been considered. For such sequences, so-called {\em toric systems} have
been introduced, which represent a normal form for such sequences. More precisely, a
toric system is simply given by forming the differences $A_i := D_{i + 1} - D_i$ for all
$1 \leq i < n$ and $A_n := D_1 - D_n - K_X$. Such a toric
system satisfies the following equations:
\begin{enumerate}[(i)]
\item $A_i \cdot A_{i + 1} = 1$ for all $i$,
\item $A_i \cdot A_j = 0$ if $i \neq j$ and $\{i, j\} \neq \{k, k + 1\}$ for any $1 \leq k \leq n$,
\item $\sum_{i = 1}^n A_i = -K_X$.
\end{enumerate}
In \cite{HillePerling11} the peculiar fact was observed that a toric system is equivalent to
the data of a smooth complete toric surface, which this way becomes a combinatorial invariant
of an exceptional sequence of invertible sheaves.

In this section we will extend the notion of toric systems to the case of general exceptional sequences.
This generalization will be straightforward for the most part, with two notable differences:
\begin{enumerate}
\item It is necessary to pass to rational Chern classes, i.e. to an exceptional sequence
we will associate elements $A_i$ in a similar fashion, but they are now constructed as
elements of $\chnum^1(X)_\Q$.
\item Objects of rank zero cannot be treated uniformly together with objects of nonzero
rank.
\end{enumerate}

\begin{sub}\label{int}
We start with an exceptional sequence $\sh{E}_1, \dots, \sh{E}_t$, where $e_i \neq 0$ for
all $i$, which we assume extended to a cyclic exceptional sequence. This in particular
implies that $c_1(\sh{E}_i, \sh{E}_{i + 1}) = c_1(\sh{E}_{i + t}, \sh{E}_{i + 1 + t})$
for all $i$.
For $t > 2$ the following are straightforward consequences of \ref{additivity},
Proposition \ref{intersectionprop}, Serre duality, and the Riemann-Roch formula:
\begin{enumerate}[(i)]
\item $c_1(\sh{E}_{i - 1}, \sh{E}_i) \cdot c_1(\sh{E}_i, \sh{E}_{i + 1}) =
e_{i - 1} e_{i + 1}$ for every $i \in \Z$.
\item $c_1(\sh{E}_{i - 1}, \sh{E}_i) \cdot c_1(\sh{E}_{j - 1}, \sh{E}_j) = 0$
for $1 < \vert i - j \vert < t - 1$.
\end{enumerate}
\end{sub}

\begin{sub}\label{nonzerotoricsystem}
Observe that for any three objects $\sh{E}, \sh{F}, \sh{G}$ we have
$s(\sh{E}, \sh{F}) + s(\sh{F}, \sh{G}) = s(\sh{E}, \sh{G})$ by \ref{additivity}.
 The intersection product extends in a natural way to a $\Q$-valued
bilinear form on $\chnum^1(X)_\Q$, so that we can reformulate the equalities of \ref{int}
as follows:
\begin{enumerate}[(i)]
\item $s(\sh{E}_{i - 1}, \sh{E}_i) \cdot s(\sh{E}_i, \sh{E}_{i + 1}) = 1 / e_i^2$,
\item $s(\sh{E}_{i - 1}, \sh{E}_i) \cdot s(\sh{E}_{j - 1}, \sh{E}_j) = 0$
for $1 < \vert i - j \vert < t - 1$.
\end{enumerate}	
Moreover, by $s(\sh{E}_t, \sh{E}_{t + 1}) = s(\sh{E}_t, \sh{E}_1 \otimes
\omega^{-1}) = s(\sh{E}_t, \sh{E}_1) - K_X$, we have:
\begin{enumerate}[(i)]
\setcounter{enumi}{2}
\item $\sum_{i = 1}^t s(\sh{E}_i, \sh{E}_{i + 1}) = -K_X$.
\end{enumerate}
\end{sub}

\begin{sub}\label{zerotoricsystem}
Assume that we have an exceptional sequence $\sh{E}_1, \dots, \sh{E}_t$ and assume that one of
the $\sh{E}_i$ has rank zero. By choosing the appropriate winding in the cyclic sequence, we
can always assume without loss of generality that we have $\sh{E}_1 \simeq \sh{Z}$ with $z = 0$.
Then for any pair $\sh{E}_i, \sh{E}_j$ with $e_i, e_j \neq 0$ we have by Lemma
\ref{zerointersectionequal}, that $c_1(\sh{Z}) s(\sh{E}_i, \sh{E}_j) = 0$ if
$1 < i < j \leq t$. If $1 - t < j < 1 < i \leq t$ and $i - j < t$,
then $\sh{E}_j = \sh{E}_{j + t} \otimes \omega$ and therefore $s(\sh{E}_j, \sh{E}_i) =
s(\sh{E}_{j + t}, \sh{E}_i) - K_X$, hence $c_1(\sh{Z}) s(\sh{E}_j, \sh{E}_i) = - c_1(\sh{Z})
K_X = -(1 + 2 \delta_\sh{Z})$ for some undetermined integer $\delta_\sh{Z}$ by Lemma
\ref{chirank} (\ref{chiranki}).
\end{sub}

\begin{sub}\label{toricsystemconstruction}
Now consider an arbitrary exceptional sequence $\sh{E}_1, \dots, \sh{E}_n$. Then we can
partition $\{1, \dots, n\} = I\, \amalg J$, where $I = \{i_1 < \dots < i_t\}$, $J =
\{j_1 < \dots <  j_{n - t}\}$, and such that $e_i = 0$ iff $i \in I$. Then we set:
\begin{align*}
E_k & := c_1(\sh{E}_{i_k}) \ \text{ for } 1 \leq k \leq t,\\
A_k & := s(\sh{E}_{j_k}, \sh{E}_{j_{k + 1}}) \ \text{ for } 1 \leq k < n - t,\\
A_{n - t} & := s(\sh{E}_{j_{n - t}}, \sh{E}_{j_1} \otimes \omega^{-1})
\end{align*}
Clearly, the $A_k$ satisfy the conditions listed in  \ref{nonzerotoricsystem} and it follows from
from \ref{eulerchar1} (\ref{eulerchar1i}), \ref{eulersym} (\ref{eulersymiv}) and
\ref{zerotoricsystem} that $E_i^2 = -1$ and
there exist integers $\delta_1, \dots, \delta_t$ such that
 $-K_X E_i = 1 + 2 \delta_i$ for all $i \in I$, and $E_i \cdot E_k = 0$ for all $i \neq k$.
Moreover, by \ref{zerotoricsystem} there exists for every $i \in I$ precisely one $j \in J$
such that $E_i . A_j \neq 0$.
\end{sub}

For easier notation, we give a formal definition for above data.

\begin{definition}\label{toricsystemdef}
An {\em abstract toric system} on $X$ is given by the following data:
\begin{enumerate}
\item\label{toricsystemdef1} For $0 \leq t \leq n - 3$ a collection of integral divisor classes $E_1, \dots, E_t$ and
integers $\delta_1, \dots, \delta_t$ with
$E_i^2 = -1$ and $-K_X E_i = 1 + 2 \delta_i$ for all $i$ and $E_i \cdot E_j = 0$ for all $i \neq j$.
\item\label{toricsystemdef2} A sequence of ranks $r_1, \dots, r_{n - t} \in \Z
\setminus \{0\}$ with $\gcd\{r_1, \dots, r_{n - 1}\} = 1$.
\item\label{toricsystemdef3} A sequence of $\Q$-divisor classes $A_1, \dots, A_{n - t} \in \chnum^1(X)_\Q$
such that
\begin{enumerate}[(i)]
\item\label{toricsystemdef3i} $r_i r_{i + k + 1} (A_i + \cdots + A_{i + k})$ is integral for every $i$ and every $0 \leq k < n - t$,
\item\label{toricsystemdef3ii} $A_i \cdot A_{i + 1} = \frac{1}{r_{i + 1}^2}$ for every $i$,
\item\label{toricsystemdef3iii} $A_i \cdot A_j = 0$ if $i \neq j$ and $\{i, j\} \neq \{k, k + 1\}$ for any $1 \leq k \leq t$,
\item\label{toricsystemdef3iv} $\sum_{i = 1}^{n - t} A_i = -K_X$.
\end{enumerate}
\item\label{toricsystemdef4} A function $\phi: \{1, \dots, t\} \rightarrow
\{1, \dots, n - t\}$ such that $E_i \cdot A_j \neq 0$ if and only if $j = \phi(i)$
(and thus $E_i \cdot A_{\phi(i)} = 1 + 2 \delta_i$ by (\ref{toricsystemdef1}) and
(\ref{toricsystemdef3iv})).
\end{enumerate}
Note that the indices of the $r_i$ and $A_i$ are to be read cyclically; in particular, we have
$A_{n - t} \cdot A_1 = 1 / r_1^2$. Also note that by Lemma \ref{rankzerosequences} we can make
the implicit assumption that $n - t \geq 3$. Moreover, $\phi$ and the $r_i$ are completely determined
by the divisors $A_j$, $E_k$, so, usually we will specify an abstract toric system only by the data
$E_1, \dots, E_t, A_1, \dots, A_{n - t}$.

A {\em toric system} is an abstract toric system which can be constructed from an exceptional
sequence $\sh{E}_1, \dots, \sh{E}_n$ by the procedure described in \ref{toricsystemconstruction}.
\end{definition}

Once Theorem \ref{deltatheorem} is proven, by Corollary \ref{deltaz} (\ref{deltaziv})
it will in many situations be harmless to require that the $\delta_i$ are zero.

\begin{remark}\label{delayremark}
In the following we will exclusively consider actual (i.e. non-abstract) toric systems.
In \cite[\S 2]{HillePerling11}, some effort has been devoted to the inverse problem, i.e. the
question whether for a given toric system we can check implications such as vanishing
of the $\chi(\sh{O}(-A_i))$.
However, contrary to the case of line bundles, the association of toric systems to exceptional
objects is not as straightforward at this stage. Instead, our strategy in the subsequent sections
will be to reduce such questions to the case of sequences of rank one objects (see Remark
\ref{delayedremark} below).
\end{remark}

\begin{example}\label{p2blowupexample}
Consider the strongly exceptional sequence $\sh{T}, \sh{O}(2), \sh{O}(4)$ on $\mathbb{P}^2$,
where $\mathcal{T}$ denotes the tangent sheaf. If we denote $H$ the class of a line in
$\Ch^1(\mathbb{P}^2)$, then the toric system associated to this sequence is given by
$A_1, A_2, A_3$ =
$\frac{1}{2}H, 2H, \frac{1}{2}H$. Now we take any point $x \in \mathbb{P}^2$ and
denote $b: \mathbb{F}_1 \simeq \widetilde{\mathbb{P}}^2 \rightarrow \mathbb{P}^2$ the blow-up at $x$ with exceptional curve $E$. For ease of notation we identify $E$ with its class
in $\Ch^1(\mathbb{F}_1)$. We also identify $H$ with its pull-back in $\Ch^1(\mathbb{F}_1)$.
Completing to a full exceptional sequence by adding $\sh{O}_E(E)$ we get
$\sh{O}_E(E), b^*\sh{T}, b^*\sh{O}(2), b^*\sh{O}(4)$.
Then the toric system associated to this sequence is given by the
($-1$)-divisor $E_1 = E$, the rational classes $A_1, A_2, A_3$ = $\frac{1}{2}H, 2H, \frac{1}{2}H - E$,
and $\phi: \{1\} \rightarrow \{1, 2, 3\}$ with $\phi(1) = 3$.
Now, by right mutating
the pair $\sh{O}_E(E), b^*\sh{T}$, we obtain $b^*\sh{T}, \sh{R}, b^*\sh{O}(2), b^*\sh{O}(4)$.
We have $\chi(\sh{O}_E(E), b^*\sh{T}) = -2$, hence
$\rk \sh{R} = -4$. Moreover, we get $s(b^*\sh{T}, \sh{R}) = \frac{1}{4} E$ and
consequently the new toric system consists of four rational divisor classes which are given by
\begin{equation*}
\frac{1}{4} E, \frac{1}{2} H - \frac{1}{4} E, 2H, \frac{1}{2} H - E.
\end{equation*}
\end{example}

\section{Toric systems and their Gale dual}\label{galedualsection}

Let $\mathbf{A} = E_1, \dots, E_t$, $A_1, \dots, A_{n - t}$ be an abstract toric system. It will be an important
technical aspect to consider the projection of the $A_j$ onto the orthogonal complement of the $E_i$ in
$\chnum^1(X)_\Q$. Recall that $E_i \cdot A_{\phi(i)} = -(1 + 2\delta_i)$ and $E_i \cdot A_j = 0$ for $j \neq \phi(i)$.

\begin{definition}\label{contractiondef}
The {\em contraction} $\tilde{\mathbf{A}}$ of $\mathbf{A}$ is given by
$\tilde{A}_1, \dots, \tilde{A}_{n - t}$, where
\begin{equation*}
\tilde{A}_i = A_i + \sum_{j \mid \phi(j) = i} (1 + 2 \delta_j) E_j
\end{equation*}
\end{definition}

Both $\mathbf{A}$ and $\tilde{\mathbf{A}}$ give rise to subgroups of
$\chnum^1(X)_\Q \simeq \Q^{n - 2}$
given by $A := \langle E_1, \dots, E_t, A_1, \dots, A_{n - t} \rangle_\Z$ and $\tilde{A} :=
\langle \tilde{A}_1, \dots, \tilde{A}_{n - t} \rangle_\Z$.
Clearly, both $A$ and $\tilde{A}$ are finitely generated and torsion free $\Z$-modules
of rank at most $n - 2$. It is easy to see that the $\tilde{A}_i$ still satisfy conditions
(\ref{toricsystemdef3i}),  (\ref{toricsystemdef3ii}), (\ref{toricsystemdef3iii}) of Definition
\ref{toricsystemdef}, but $\sum_{i = 1}^{n - t} \tilde{A}_i = -K_X + \sum_{j = 1}^t (1 + 2 \delta_j) E_j$.

\begin{proposition}\label{toricsystemrank}
$\rk A = n - 2$ and $\rk \tilde{A} = n - t - 2$.
\end{proposition}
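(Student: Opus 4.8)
The plan is to prove the second equality $\rk\tilde A = n - t - 2$ first and then deduce $\rk A = n-2$ from it, since the contraction turns out to split off the $E_i$ orthogonally. The starting point is a short computation of the pairings of the $\tilde A_i$. Using Definition \ref{contractiondef} together with the defining relations (\ref{toricsystemdef1})--(\ref{toricsystemdef4}), I would check the three identities
\begin{equation*}
\tilde A_i \cdot E_k = 0, \qquad \tilde A_i \cdot \tilde A_{i+1} = \tfrac{1}{r_{i+1}^2}, \qquad \tilde A_i \cdot \tilde A_j = 0 \ \text{ for } j \not\equiv i, i\pm 1 .
\end{equation*}
All three are immediate from $E_j\cdot E_k = -\delta_{jk}$ and $E_k\cdot A_l = \delta_{l,\phi(k)}$: for instance $\tilde A_i\cdot E_k = A_i\cdot E_k + \sum_{\phi(j)=i}E_j\cdot E_k = \delta_{i,\phi(k)} - \delta_{i,\phi(k)} = 0$, while the cross terms in $\tilde A_i\cdot\tilde A_{i+1}$ all vanish because $\phi(j)=i$ and $\phi(k)=i+1$ force $j\neq k$. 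Note that the self-intersections $\tilde A_i^2$ are neither computable from the axioms nor, as it will turn out, needed.

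The first identity says $\tilde A_i \in \langle E_1,\dots,E_t\rangle^\perp =: V'$. Since the $E_i$ have Gram matrix $-I_t$, the subspace $\langle E_1,\dots,E_t\rangle$ is non-degenerate of dimension $t$, so the intersection form (non-degenerate on $\Ch^1(X)_\Q \cong \Q^{n-2}$ by the Hodge index theorem) splits as $\Ch^1(X)_\Q = \langle E_i\rangle \perp V'$ with $\dim V' = n-t-2$. This gives the upper bound $\rk\tilde A = \dim\spann_\Q(\tilde A_1,\dots,\tilde A_{n-t}) \le n-t-2$.

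For the matching lower bound I would examine the Gram matrix $\tilde G = (\tilde A_i\cdot\tilde A_j)$, which by the computation above is an $(n-t)\times(n-t)$ cyclic tridiagonal matrix whose adjacent off-diagonal entries $1/r_{i+1}^2$ (including the wrap-around corners, read cyclically) are all nonzero because $r_i \in \Z\setminus\{0\}$. The key point is that such a periodic Jacobi matrix has kernel of dimension at most $2$: a null vector $c$ satisfies, for each $i$, the three-term recurrence $\tfrac{1}{r_i^2}c_{i-1} + \tilde A_i^2 c_i + \tfrac{1}{r_{i+1}^2}c_{i+1}=0$, and since the coefficient $1/r_{i+1}^2$ of $c_{i+1}$ never vanishes, the whole vector is determined by two consecutive entries $(c_1,c_2)$; hence $\dim\ker\tilde G \le 2$ and $\rk\tilde G \ge n-t-2$. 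As $\rk\tilde G \le \rk\tilde A$ always, the chain $n-t-2 \le \rk\tilde G \le \rk\tilde A \le n-t-2$ collapses and yields $\rk\tilde A = n-t-2$. The main (and essentially only) subtlety is recognizing that the unknown diagonal of $\tilde G$ is irrelevant: the nonvanishing of the off-diagonals alone forces the nullity bound.

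Finally, to pass to $A$ I would use the contraction identity $A_i = \tilde A_i - \sum_{\phi(j)=i}E_j$, which shows $A = \langle E_1,\dots,E_t\rangle + \tilde A$ as $\Q$-subspaces of $\Ch^1(X)_\Q$. By the first pairing identity this sum is orthogonal, and since $\langle E_i\rangle$ is non-degenerate we have $\langle E_i\rangle\cap\tilde A \subseteq \langle E_i\rangle\cap\langle E_i\rangle^\perp = 0$, so the sum is direct: $A = \langle E_i\rangle\oplus\tilde A$. Therefore $\rk A = t + \rk\tilde A = t + (n-t-2) = n-2$, completing the proof.
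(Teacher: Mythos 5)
Your proposal is correct, and its skeleton matches the paper's proof: both reduce the first equality to the second via the orthogonal splitting $A = \langle E_1, \dots, E_t \rangle \oplus \tilde{A}$ (the paper opens by noting the $E_i$ form an orthogonal system containing $\tilde{A}$ in their orthogonal complement, so only $\rk \tilde{A} = n-t-2$ needs proof), and both obtain the upper bound $\rk \tilde{A} \leq n-t-2$ from $\tilde{A} \subseteq \langle E_i \rangle^\perp$ together with nondegeneracy of the intersection form on $\Ch^1(X)_\Q$. Where you genuinely diverge is the lower bound. The paper proves by induction that the vectors $\tilde{A}_1, \dots, \tilde{A}_{n-t-2}$ are themselves $\Q$-linearly independent: any vector in the span of $\tilde{A}_1, \dots, \tilde{A}_{i-1}$ is orthogonal to $\tilde{A}_{i+1}$ (for $i \leq n-t-2$ the cyclic wrap-around term $\tilde{A}_{n-t} \cdot \tilde{A}_1 \neq 0$ never interferes), whereas $\tilde{A}_i \cdot \tilde{A}_{i+1} = 1/r_{i+1}^2 \neq 0$, so $\tilde{A}_i$ lies outside that span; in matrix language this amounts to exhibiting a triangular $(n-t-2) \times (n-t-2)$ minor of your Gram matrix $\tilde{G}$ with nonzero diagonal. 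You instead bound the nullity of the full periodic tridiagonal matrix $\tilde{G}$ by $2$ using the three-term recurrence, and close the chain with $\rk \tilde{G} \leq \rk \tilde{A}$. Your route never names an independent subset of the $\tilde{A}_i$, but it is equally elementary, makes completely explicit why the unknown diagonal entries $\tilde{A}_i^2$ are irrelevant, and yields the small bonus that $\tilde{G}$ has nullity exactly $2$ --- pleasantly consistent with the rank-two relation module $M$ in the Gale-duality diagram of \ref{dualitydef}. Both arguments tacitly use $n - t \geq 3$ (guaranteed by Lemma \ref{rankzerosequences}): yours so that the cyclic tridiagonal structure of $\tilde{G}$ and the recurrence make sense, the paper's so that the orthogonality used in the induction step holds.
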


\begin{proof}
As the $E_i$ form an orthogonal system of divisors which by construction contain
$\tilde{A}$ in their orthogonal complement, it suffices to show that $\rk \tilde{A} = n - t - 2$.
Starting with the observation that by Definition \ref{toricsystemdef} (\ref{toricsystemdef3ii}) the
$\tilde{A}_i$, and in particular $\tilde{A}_1$, are all nonzero
we will show by induction that $\tilde{A}_1, \dots, \tilde{A}_i$ are $\Q$-linearly independent for
$1 \leq i \leq n - t - 2$. So, for $1 < i \leq n - t - 2$ we assume that $\tilde{A}_1, \dots,
\tilde{A}_{i - 1}$ are linearly independent. Then, for any $\Q$-linear combination
$B := \sum_{j = 1}^{i - 1} \alpha_j \tilde{A}_j$, we have $B \cdot \tilde{A}_{i + 1} = 0$.
However, we have $\tilde{A}_i \cdot \tilde{A}_{i + 1} = \frac{1}{r_i^2} \neq 0$, hence $\tilde{A}_i$
cannot be contained in the linear span of $\tilde{A}_1, \dots, \tilde{A}_{i - 1}$, hence
$\tilde{A}_1, \dots, \tilde{A}_i$ are linearly independent for
all $1 \leq i \leq n - t - 2$ and the assertion follows.
\end{proof}

\begin{sub}\label{dualitydef}
Consider the structural linear maps $c: \Z^n \twoheadrightarrow A$ and $\tilde{c}: \Z^{n - t}
\twoheadrightarrow \tilde{A}$. That is, if we denote $b_1, \dots, b_n$ the standard basis of
$\Z^n$, then we have $c(b_i) = E_i$ for $1 \leq i \leq t$ and $c(b_i) =  A_{i - t}$ for $t < i \leq n$.
For $\Z^{n - t}$ with standard basis $b'_1, \dots, b'_{n - t}$, we have $\tilde{c}(b'_i) = \tilde{A}_i$
for every $i$. We define a linear map $\Phi: \Z^{n - t} \rightarrow \Z^n$ by setting $\Phi(b'_i)
= b_{i + t} + \sum_{j \mid \phi(j) = i} (1 + 2\delta_j) b_j$ for every $1 \leq i \leq n - t$.
Then $\Phi$ induces a linear map $\bar{\Phi}: \tilde{A} \rightarrow A$ with $\bar{\Phi}(\tilde{A}_i) =
A_i + \sum_{j \mid \phi(j) = i}  (1 + 2\delta_j) E_j$ for $1 \leq i \leq n - t$.
Clearly, both
$\Phi$ and $\bar{\Phi}$ are injective and their image is saturated in $\Z^n$ and $A$, respectively.
We obtain the following commutative diagram:
\begin{equation*}
\xymatrix{
0 \ar[r] & M \ar^{\tilde{L}}[r] \ar^{\Psi}[d] & \Z^{n - t} \ar^{\tilde{c}}[r] \ar^{\Phi}[d] & \tilde{A}
\ar[r] \ar^{\bar{\Phi}}[d] & 0 \\
0 \ar[r] & M' \ar^{L}[r] & \Z^n \ar^{c}[r] & A \ar[r] & 0,
}
\end{equation*}
where we set $M = \ker \tilde{c}$ and $M' = \ker(c)$.
We can represent $L$ and $\tilde{L}$ as
row matrices with rows $l_1, \dots, l_n \in (M')^*$ and $\tilde{l}_1, \dots, \tilde{l}_{n - t} \in M^*$,
respectively.
We have $M, M' \simeq \Z^2$ by Proposition \ref{toricsystemrank}. Clearly, $\Psi$ is injective and
it follows from the saturatedness of $\Phi(\Z^{n - t})$ in $\Z^n$ that its cokernel is trivial, hence
$\Psi$ is an isomorphism. Moreover, by dualizing the left part of the diagram and the constrution of
$\Psi$, we immediately obtain following statement.
\end{sub}

\begin{proposition}\label{multiplicity}
Denote $N := M^*$, $N' := (M')^*$ and consider the dual maps
\begin{equation*}
\xymatrix{
\Z^n \ar^{L^T}[r] \ar^{\Phi^T}[d] & N' \ar^{\Psi^T}[d] \\
\Z^{n - t}  \ar^{\tilde{L}^T}[r] & N
}
\end{equation*}
where we identify the column vectors $l_i^T$ and $\tilde{l}_i^T$ with the images of the $i$-th standard
basis vector of $\Z^n$ and $\Z^{n - t}$, respectively, in $N$. Then $\Psi^T$
is an isomorphism which maps $l_i$ to $\tilde{l}_{i - t}$ for
$t < i \leq n$ and $l_i$ to $(1 + 2 \delta_i) \tilde{l}_{\phi(i)}$ for $1 \leq i \leq t$.
\end{proposition}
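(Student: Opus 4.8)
The plan is to read off the entire statement from the single commutative square $L = \Phi \circ \tilde{L}$ contained in the diagram of \ref{dualitydef}, simply by dualizing. First I would record that both horizontal short exact sequences of \ref{dualitydef} split over $\Z$: the groups $A$ and $\tilde{A}$ are finitely generated and torsion free (of the ranks computed in Proposition \ref{toricsystemrank}), hence free, so the sequences $0 \to M \to \Z^n \to A \to 0$ and $0 \to M \to \Z^{n-t} \to \tilde{A} \to 0$ split. Consequently $\Hom(-, \Z)$ keeps them exact, and in particular $L^T \colon \Z^n \to N$ and $\tilde{L}^T \colon \Z^{n-t} \to N$ are both surjective. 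This surjectivity is the one genuinely non-formal input, and it is immediate here.

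Next I would identify the vertical map $N \to N$. Dualizing the commutative square whose left vertical arrow is $\mathrm{id}_M$ turns the relation $L = \Phi \circ \tilde{L}$ into $L^T = \tilde{L}^T \circ \Phi^T$, while the right vertical arrow of the dualized square is exactly $(\mathrm{id}_M)^* = \mathrm{id}_N$; thus the map is the identity and a fortiori an isomorphism. If one prefers to argue without invoking the dual of $\mathrm{id}_M$ directly: any $\psi \colon N \to N$ making the proposition's square commute satisfies $\psi \circ L^T = \tilde{L}^T \circ \Phi^T = L^T$, and surjectivity of $L^T$ forces $\psi = \mathrm{id}_N$.

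It remains to match up the $l_i$ with the $\tilde{l}_j$, which is pure bookkeeping with $\Phi^T$. From $\Phi(b'_i) = b_{i+t} + \sum_{j \mid \phi(j) = i} b_j$ one reads off $\Phi^T(b_k) = b'_{k-t}$ for $t < k \le n$ and $\Phi^T(b_k) = b'_{\phi(k)}$ for $1 \le k \le t$ (the two alternatives never collide, since $k = i+t$ is impossible for $k \le t$). Writing $l_k = L^T(b_k)$ and $\tilde{l}_m = \tilde{L}^T(b'_m)$ for the rows of the respective matrices, the identity $L^T = \tilde{L}^T \circ \Phi^T$ then gives $l_k = \tilde{L}^T(\Phi^T(b_k))$, i.e. $l_k = \tilde{l}_{k-t}$ for $t < k \le n$ and $l_k = \tilde{l}_{\phi(k)}$ for $1 \le k \le t$. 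This is precisely the asserted behaviour of the (identity) map $N \to N$, so no step here presents a real obstacle; the content of the proposition is carried entirely by the commutative diagram of \ref{dualitydef} together with the splitting argument above.
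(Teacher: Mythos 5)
Your proof is correct, but it takes a genuinely different route from the paper's. You read the entire statement off the commutative diagram of \ref{dualitydef}: dualizing the left square turns $L = \Phi \circ \tilde{L}$ into $L^T = \tilde{L}^T \circ \Phi^T$ with right-hand vertical map $(\mathrm{id}_M)^* = \mathrm{id}_N$, and the bookkeeping $\Phi^T(b_k) = b'_{k - t}$ for $k > t$, $\Phi^T(b_k) = b'_{\phi(k)}$ for $k \leq t$ then gives the matching of columns. The paper never dualizes that square. Instead it re-derives the key relation $l_i = l_{t + \phi(i)}$ directly from the axioms of a toric system: since $E_i^2 = -1$ one has $A = \Z E_i \oplus (E_i^\perp \cap A)$; in such an adapted basis the $E_i$-row of $c$ is $b_i^* - b_{t + \phi(i)}^*$ (because $E_i \cdot E_j = 0$ for $j \neq i$ and $E_i \cdot A_j \neq 0$ exactly for $j = \phi(i)$), and $c \circ L = 0$ forces $l_i = l_{t + \phi(i)}$; an explicit cokernel computation (the matrix $c'$) then shows that deleting the first $t$ rows of $L$ yields $\tilde{L}$, and iterating over $1 \leq i \leq t$ finishes the argument.

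The difference matters because of the sentence immediately preceding the proposition: the paper says the proposition is what \emph{justifies} the natural identification of the two kernels, i.e.\ precisely the left-hand equality in the diagram of \ref{dualitydef} --- the one input carrying all the weight in your argument. Read in the paper's logical order, your proof risks circularity, which is presumably why the author re-proves the column relations from the intersection form. The repair is cheap and you should include it: $\Phi$ and $\bar{\Phi}$ are injective and $\operatorname{im} \Phi$ is saturated in $\Z^n$ (it is even a direct summand, being the graph of a homomorphism $\Z^{n - t} \rightarrow \Z^t$ under projection to the last $n - t$ coordinates), so the snake lemma embeds $\coker(\ker \tilde{c} \rightarrow \ker c)$ into the torsion-free group $\coker \Phi$; since both kernels are free of rank $2$ by Proposition \ref{toricsystemrank} and the induced map is injective, that cokernel is finite, hence zero, and $\Phi$ really does identify $\ker \tilde{c}$ with $\ker c$. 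With this supplement your proof is complete and arguably cleaner than the paper's; what the paper's computational proof buys in exchange is the explicit manipulation of the matrices $c$ and $c'$, which is reused later (cf.\ the proof of Proposition \ref{mutationlocal}).
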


So we can naturally identify $M = M'$ and $N = N'$, respectively, and consider $\Psi^T$ as the
identity map. Then
both $L^T$ and $\tilde{L}^T$ give rise to almost the same set of column vectors:
the column vectors of $\tilde{L}^T$ coincide with the last $n - t$ column vectors of $L^T$ and the first $t$
column vectors of $L^T$ are multiples of column vectors of $\tilde{L}^T$ by some factors $(1 + 2\delta_i)$.
We will see later that the columns of $\tilde{L}^T$ appear with multiplicity $1$ and, once Theorem \ref{deltatheorem}
is established, we can assume that every column vector $l_{t + j}$ of $L^T$ occurs (up to sign)
with multiplicity $1 + |\{i \mid \phi(i) = j\}|$.

\begin{example}\label{p2blowupexample2}
In Example \ref{p2blowupexample} a toric system was given with ($-1$)-divisor $E$ and
$A_1, A_2, A_3$ = $\frac{1}{2} H, 2H$, $\frac{1}{2} H - E$ such that $E \cdot (\frac{1}{2} H - E) = 1$.
For the Gale dual, we obtain vectors $l_1, l_2, l_3, l_4$ which for a suitable choice
of basis can be represented as $l_1 = l_4 = (1, 0)$, $l_2 = (-1, 4)$, $l_3 = (0, -1)$, i.e.
up to latter multiplicity, the $l_i$ generate fan of $\mathbb{P}(1, 1, 4)$.
The mutated toric system $A_1', A_2', A_3', A_4'$ =
$\frac{1}{4} E, \frac{1}{2} H - \frac{1}{4} E, 2H, \frac{1}{2} H - E$ has Gale duals $l_1, l_2, l_3,
l_4'$ with $l_4' = (3, 4)$ which can be interpreted to generate the fan of a weighted blow-up of
$\mathbb{P}(1, 1, 4)$
with two singular points of order $4$ and $16$, respectively. The corresponding fans are shown
in figures \ref{p2pic2} and \ref{p2pic3} in the introduction. Note that with our
current terminology, the enumeration of the $l_i$ in figure \ref{p2pic2} is that of the
$\tilde{l}_i$ rather than the $l_i$.

In both cases the $l_i$ are primitive lattice vectors and generate the fan of a complete
toric surface. It is easy to see that the singularities are $T$-singularities. We will show that
this and the observation that the
multiplicity $l_1 = l_4$ translates into a weighted blow-up via mutation
are general properties of toric systems.
\end{example}

\section{Moving around objects of rank zero}\label{rankzeromoving}

Let $\mathbf{E} = \sh{E}_1, \dots, \sh{E}_n$ be an exceptional sequence and denote
$E_1, \dots, E_t, A_1, \dots, A_{n - t}$ its associated toric system and $\tilde{A}_1, \dots,
\tilde{A}_{n - t}$ its contraction. Via Gale duality, we have extracted certain collections of integer vectors
from both data in terms of rows of certain matrices $L$ and $\tilde{L}$, respectively. By Proposition
\ref{multiplicity}, the rows of $\tilde{L}$ coincide with the last $n - t$ rows of $L$ and
for $1 \leq i \leq t$, the $i$-th row $l_i$ coincides with $(1 + 2\delta_i) l_{t + \phi(i)}$.
It will be the subject of the subsequent sections to show that the vectors $\tilde{l}_1, \dots,
\tilde{l}_{n - t}$ are cyclically ordered and generate the fan associated to a complete toric
surface. This section is devoted to the first $t$ columns of $L$ and their behaviour under
mutation.

\begin{sub}\label{zerojumpsprep}
Consider an exceptional triple $\sh{E}, \sh{Z}, \sh{F}$ with $e, f \neq 0$
and $z = 0$. By moving $\sh{Z}$
to the left or right via mutating $\sh{E}$ or $\sh{F}$, respectively, we obtain exceptional triples
$\sh{Z}, R_\sh{Z}\sh{E}, \sh{F}$ and
$\sh{E}, L_\sh{Z} \sh{F}, \sh{Z}$, respectively. With \ref{mutationrank} and Lemma \ref{chirank}
we see that $s(R_\sh{Z} \sh{E}) = s(\sh{E}) + (1 + 2 \delta_\sh{Z}) c_1(\sh{Z})$ and
$s(L_\sh{Z} \sh{F}) = s(\sh{F}) - (1 + 2 \delta_\sh{Z}) c_1(\sh{Z})$. Thus we get
$s(R_\sh{Z} \sh{E}, \sh{F}) = s(\sh{E}, L_\sh{Z} \sh{F}) = s(\sh{E}, \sh{F}) - (1 + 2 \delta_\sh{Z})
c_1(\sh{Z})$. If we can extend our exceptional triple, say, to the left, i.e.
we have an exceptional sequence $\sh{D}, \sh{E}, \sh{Z}, \sh{F}$ with $d \neq 0$, then we get
furthermore that $s(\sh{D}, L_\sh{Z} \sh{F}) = s(\sh{D}, \sh{F}) + (1 + 2 \delta_\sh{Z}) c_1(\sh{Z})$.
In the following proposition we apply this simple modification of Chern classes to toric systems.
\end{sub}

\begin{proposition}\label{zerojumps}
Let $\mathbf{E} = \sh{E}_1, \dots, \sh{E}_n$ be an exceptional sequence with associated toric system
$E_1, \dots, E_t$, $A_1, \dots, A_{n - t}, \phi$.
Assume that $\sh{E}_k$ has rank zero for some $1 \leq k \leq n$ and let $1 \leq i \leq t$ such that
$E_i = c_1(\sh{E}_k)$. Consider the mutations
$L_k\mathbf{E}$ and $R_{k - 1}\mathbf{E}$. Then the corresponding toric systems are given by
$E'_1, \dots, E'_t, A_1', \dots, A_{n - t}', \phi'$ (for $L_k\mathbf{E}$) and $E''_1, \dots, E''_t,
A_1'', \dots, A_{n - t}'', \phi''$ (for $R_{k - 1}\mathbf{E}$), where
\begin{enumerate}[(i)]
\item\label{zerojumpsi} If $e_{k + 1} = 0$ (resp. $e_{k - 1} = 0$), then $E'_i = -E_{i + 1}$,
$E'_{i + 1} = E_i$ and
$E'_j = E_j$ otherwise, $A_j' = A_j$ for all $j$, and $\phi' = \phi$
(resp. $E''_{i - 1} = E_i$, $E''_i = -E_{i - 1}$ and
$E''_j = E_j$ otherwise, $A_j'' = A_j$ for all $j$ and $\phi'' = \phi$).
\item\label{zerojumpsii} If $e_{k + 1} \neq 0$ then $E'_j = E_j$ for all $j$,
$A_{\phi(i)}' = A_{\phi(i)} - (1 + 2 \delta_i) E_i$,
$A_{\phi(i) + 1}' = A_{\phi(i) + 1} + (1 + 2 \delta_i)E_i$,
and $A_j' = A_j$ otherwise. Moreover, $\phi'(j) = \phi(j)$ for $j \neq i$ and $\phi'(i) = \phi(i) + 1$.
\item\label{zerojumpsiii} If $e_{k - 1} \neq 0$ then $E''_j = E_j$ for all $j$,
$A_{\phi(i) - 1}'' = A_{\phi(i) - 1} + (1 + 2 \delta_i) E_i$, $A_{\phi(i)}'' =
A_{\phi(i)} - (1 + 2 \delta_i)E_i$, and $A_j'' = A_j$. Moreover, $\phi''(j) = \phi(j)$ for $j \neq i$
and $\phi''(i) = \phi(i) - 1$.
\end{enumerate}
\end{proposition}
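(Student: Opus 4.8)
The plan is to compute how the Chern class data of the toric system transforms under the mutations $L_k\mathbf{E}$ and $R_{k-1}\mathbf{E}$, and then translate these changes into the Gale dual lattice picture via Propositions \ref{multiplicity} and \ref{toricsystemconstruction}. The essential input is already assembled in paragraph \ref{zerojumpsprep}: for an exceptional triple $\sh{E}, \sh{Z}, \sh{F}$ with $z = 0$, we have $s(R_\sh{Z}\sh{E}) = s(\sh{E}) - c_1(\sh{Z})$ and $s(L_\sh{Z}\sh{F}) = s(\sh{F}) + c_1(\sh{Z})$, so that the relevant differences $s(\sh{D}, L_\sh{Z}\sh{F})$ and $s(R_\sh{Z}\sh{E}, \sh{F})$ shift by $\pm c_1(\sh{Z}) = \pm E_i$. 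First I would split into the three cases according to whether the neighbour of $\sh{E}_k$ that gets swapped past is itself of rank zero or of nonzero rank, since the construction of the toric system in \ref{toricsystemconstruction} treats rank-zero and nonzero-rank members completely differently (the former contribute the $E_i$, the latter the $A_j$).

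For case (\ref{zerojumpsi}), where the neighbour also has rank zero, the mutation merely interchanges two rank-zero objects in the sequence. Since both contribute only to the list $E_1, \dots, E_t$ and not to any of the differences defining the $A_j$, I would argue directly from \ref{toricsystemconstruction} that the only effect is to transpose the corresponding $E_i$ and $E_{i\pm 1}$, leaving all $A_j$ and the function $\phi$ untouched; here one must check that no $A_j$ involves $\sh{E}_k$ or its neighbour as an endpoint, which holds precisely because the indices $j_k$ enumerating $J$ skip over rank-zero positions. For cases (\ref{zerojumpsii}) and (\ref{zerojumpsiii}), the neighbour $\sh{E}_{k\pm 1}$ has nonzero rank, so it is some $\sh{E}_{j_m}$ and sits as an endpoint of exactly the two $A$'s adjacent to the position $\phi(i)$ where $E_i$ attaches. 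Using $s(R_\sh{Z}\sh{E}, \sh{F}) = s(\sh{E}, \sh{F}) + E_i$ together with the fact that the complementary difference on the other side of $\sh{E}_{k+1}$ loses a copy of $E_i$ (as recorded by $s(\sh{D}, L_\sh{Z}\sh{F}) = s(\sh{D}, \sh{E}) - E_i$ in \ref{zerojumpsprep}), I would read off directly that $A_{\phi(i)}$ gains $E_i$ while $A_{\phi(i)+1}$ (resp.\ $A_{\phi(i)-1}$) loses $E_i$, with all other $A_j$ and all $E_j$ unchanged. The update of $\phi$ then follows because, after the shift, $E_i$ now intersects $A_{\phi(i)+1}$ (resp.\ $A_{\phi(i)-1}$) nontrivially rather than $A_{\phi(i)}$.

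The one point requiring genuine care, and the main obstacle, is bookkeeping the indexing correctly across the boundary of the cyclic sequence and verifying that the intersection conditions (\ref{toricsystemdef3ii})--(\ref{toricsystemdef3iv}) of Definition \ref{toricsystemdef} are preserved by these shifts. In particular I must confirm that the orthogonality $E_i \cdot A_j = 0$ for $j \neq \phi'(i)$ and the normalization $E_i \cdot A_{\phi'(i)} = 1$ both survive the modification $A_{\phi(i)} \mapsto A_{\phi(i)} + E_i$, which follows from $E_i^2 = -1$ and $E_i \cdot A_{\phi(i)} = 1$ (so $E_i \cdot (A_{\phi(i)} + E_i) = 0$ and $E_i \cdot (A_{\phi(i)+1} - E_i) = 1$), exactly matching the claimed $\phi'$. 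I would also verify that the telescoping sum $\sum_j A_j = -K_X$ is unaffected, which is immediate since the added and subtracted copies of $E_i$ cancel. Once these consistency checks are in place, the stated formulas for $E'_j, A'_j, \phi'$ and their primed analogues follow, and the geometric interpretation as a ``hopping'' of the multiplicity attached to the ray $l_k$ is then a direct consequence of Proposition \ref{multiplicity}.
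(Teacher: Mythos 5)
Your proposal is correct and follows essentially the same route as the paper's own proof: case (i) by observing that mutating a rank-zero pair merely swaps the numerical data, and cases (ii)--(iii) by reading off the shifts $A_{\phi(i)} \mapsto A_{\phi(i)} + E_i$, $A_{\phi(i)\pm 1} \mapsto A_{\phi(i)\pm 1} - E_i$ from the formulas of \ref{zerojumpsprep}. Your additional verification that the toric-system axioms (orthogonality with $E_i$, the value $E_i \cdot A_{\phi'(i)} = 1$, and $\sum_j A_j = -K_X$) survive the modification is not needed --- the mutated sequence is again exceptional, so its associated data is automatically a toric system --- but it is a harmless and correct cross-check.
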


\begin{proof}
(\ref{zerojumpsi}) By \ref{eulersym}  (\ref{eulersymiv}) , we have for any exceptional pair
$\sh{E}, \sh{F}$ with $e = f = 0$ that 
$[L_\sh{E}\sh{F}] = -[\sh{F}]$ and $[R_\sh{F}\sh{E}] = -[\sh{E}]$ in $\knum(X)$, so that
on the level of toric systems $E_i$ and $E_{i + 1}$ (respectively $E_{i - 1}$ and $E_i$)
get replaced by $-E_{i + 1}$ and $E_i$ (respectively $E_i$ and $-E_{i - 1}$).
The assertion correspondingly just reflects the reshuffling of data.

(\ref{zerojumpsii}) We have already seen in \ref{zerojumpsprep} that the $A_j$ behave in the
described way (in particular, the $A_j$ for $j \neq \phi(i), \phi(i + 1)$ remain the same).
Also, as the sequence of $E_j$'s remains constant, the function $\phi(i)$ changes as described.

(\ref{zerojumpsiii}) follows completely analogously to (\ref{zerojumpsii}).
\end{proof}

By Proposition \ref{multiplicity}, the translation of above modifications into
the Gale dual picture can be described as follows.

\begin{corollary}\label{zerojumpcorollary}
With the notation of Proposition \ref{zerojumps} we denote $L'$ and $L''$ (respectively $\tilde{L}'$
and $\tilde{L}''$) the Gale dual matrices corresponding to $L_k\mathbf{E}$ and $R_{k - 1} \mathbf{E}$,
respectively. Then:
\begin{enumerate}[(i)]
\item\label{zerojumpcorollaryi} $\tilde{L}' = \tilde{L}'' = \tilde{L}$.
\item\label{zerojumpcorollaryii}
If $e_{k + 1} = 0$ (resp. $e_{k - 1} = 0)$ then $l'_i = -l_{i + 1} =
-(1 + 2\delta_{i + 1})l_{t + \phi(i + 1)}$, $l'_{i + 1} = l_i$
(resp. $l''_{i - 1} = l_i$ and $l'_i = -(1 + 2 \delta_{i - 1})l_{i - 1}$), and $l_j = l_j'$ otherwise.
\item\label{zerojumpcorollaryiii}
If $e_{k + 1} \neq 0$ then $l'_i = -(1 + 2 \delta_i) l_{i + \phi'(i)}$ and $l'_j = l_j$ for all
$j \neq i$.
\item\label{zerojumpcorollaryiv}
If $e_{k - 1} \neq 0$ then $l'_i = -(1 + 2 \delta_i) l_{i + \phi''(i)}$ and $l''_j = l_j$ for all
$j \neq i$.
\end{enumerate}
\end{corollary}

\begin{sub}\label{zerojumpremark}
Whenever we have an exceptional sequence of maximal length which may contain objects of rank zero,
it follows from our discussion in paragraph \ref{spread} that it as well contains objects of nonzero rank
wich we can use to mutate this sequence into a sequence which does not contain any object of rank zero.
Conversely, if we start with a sequence which does not contain any rank zero objects, we will always
have to take into account the possibility that a series of mutations will eventually (or
intermediately) create a rank zero object. Therefore it will be important in the subsequent
sections that we are able to handle such objects flexibly. The statements in this section
imply that we have such a flexibility where we can ``move around'' rank zero objects via mutation
without essentially changing the combinatorial data associated to it. To exemplify this, consider
the exceptional sequence $\sh{O}_E(E), b^*\sh{O}(2), b^*\sh{O}(3), b^*\sh{O}(4)$ from the introduction.
In suitable coordinates, one computes the matrices as:
\begin{equation*}
L = 
\begin{pmatrix}
l_1\\ l_2 \\ l_3 \\ l_4
\end{pmatrix}
=
\begin{pmatrix}
1 & 0 \\
-1 & 1 \\
0 & -1 \\
1 & 0
\end{pmatrix}
\qquad \text{ and } \qquad \tilde{L} = 
\begin{pmatrix}
\tilde{l}_1\\ \tilde{l}_2 \\ \tilde{l}_3
\end{pmatrix}
=
\begin{pmatrix}
-1 & 1 \\
0 & -1 \\
1 & 0
\end{pmatrix}.
\end{equation*}
The corresponding toric system is given by data $E_1 = E, A_1, A_2, A_3, \phi$, with $\phi(1) = 3$.
In anticipation of Theorem \ref{maintheorem2}, Figure \ref{p2pic5} depicts the $\tilde{l}_i$ as primitive generators
for the fan of $\mathbb{P}^2$, where the presence of a second copy of $\tilde{l}_3$ in $L$ is indicated by a double arrow.
Here we express the fact that the exceptional sequence contains an object of rank zero at its
leftmost position by attaching the multiplicity $2$ to $l_3$.
\begin{figure}[ht]
\centering
\begin{minipage}{.35\linewidth}
\centering
\begin{tikzpicture}
\draw[style=help lines,dashed,very thin] (-1.3,-1.3) grid[step=.7cm] (1.3, 1.3);

\draw[->, thick, double] (0, 0) -- (.7, 0);
\draw[->, thick] (0, 0) -- (0, -.7);
\draw[->, thick] (0, 0) -- (-.7, .7);

\draw (1, -.3) node { $\tilde{l}_3 = l_1 = l_4$ };
\draw (-.25, -1) node { $\tilde{l}_2$ };
\draw (-.9, .9) node { $\tilde{l}_1$ };
\end{tikzpicture}
\caption{The fan for
$\sh{O}_E(E),$ $b^*\sh{O}(2), b^*\sh{O}(3), b^*\sh{O}(4)$.}\label{p2pic5}
\end{minipage}
\begin{minipage}{.2\linewidth}
\end{minipage}
\hspace{2cm}
\begin{minipage}{.42\linewidth}
\centering
\begin{tikzpicture}
\draw[style=help lines,dashed,very thin] (-1.3,-1.3) grid[step=.7cm] (1.3, 1.3);

\draw[->, thick] (0, 0) -- (.7, 0);
\draw[->, thick] (0, 0) -- (0, -.7);
\draw[->, thick, double] (0, 0) -- (-.7, .7);

\draw (1, -.3) node { $\tilde{l}_3$ };
\draw (-.25, -1) node { $\tilde{l}_2$ };
\draw (-.9, .9) node { $\tilde{l}_1 = \hat{l}_1 = \hat{l}_2$ };
\end{tikzpicture}
\caption{The fan for
$L_{\sh{O}_E(E)} b^*\sh{O}(2),$ $\sh{O}_E(E)[1], b^*\sh{O}(3), b^*\sh{O}(4)$.}\label{p2pic6}
\end{minipage}
\end{figure}
Now, if we move $\sh{O}_E(E)$ to the right by mutation, we obtain the sequence
$L_{\sh{O}_E(E)} b^*\sh{O}(2), \sh{O}_E(E), b^*\sh{O}(3), b^*\sh{O}(4)$ and, by applying
a shift to $\sh{O}_E(E)$, we get $L_{\sh{O}_E(E)} b^*\sh{O}(2), \sh{O}_E(E)[1], b^*\sh{O}(3), b^*\sh{O}(4)$.
If we denote the corresponding matrices by $L'$ and $\hat{L}$, respectively, we get:
\begin{equation*}
(L')^T =
\begin{pmatrix}
1 & -1 \\
-1 & 1 \\
0 & -1 \\
1 & 0
\end{pmatrix}
\qquad \text{ and }  \qquad
\hat{L}^T =
\begin{pmatrix}
\hat{l}_1\\ \hat{l}_2 \\ \hat{l}_3 \\ \hat{l}_4
\end{pmatrix}
=
\begin{pmatrix}
-1 & 1 \\
-1 & 1 \\
0 & -1 \\
1 & 0
\end{pmatrix},
\end{equation*}
where the first equality follows from Corollary \ref{zerojumpcorollary} and the application of the shift
functor translates to the second equality. Moreover, we have $\tilde{L}' = \tilde{\hat{L}} = \tilde{L}$,
so, up to multiplicities, the fan we associate to each of the three cases is the same. Figure \ref{p2pic6}
shows the effect that the multiplicity $2$ shifts counterclockwise from $\tilde{l}_3$ to $\tilde{l}_1$.
We may refer to this effect colloquially as ``hopping'' of multiplicities.

In general, if we apply, say, $L_k\mathbf{E}$ to $\mathbf{E} = \dots, \sh{E}_{k - 1}, \sh{E}_k, \sh{E}_{k + 1}, \dots$
with $e_k = 0$ and $e_{k + 1} \neq 0$, and we get $\dots, \sh{E}_{k - 1}, L_{\sh{E}_k} \sh{E}_{k + 1},
\sh{E}_k, \dots$, then by Corollary \ref{zerojumpcorollary} (\ref{zerojumpcorollaryi}) the matrices
$\tilde{L}$ and $\tilde{L}'$ (equivalently, the respective last $n - t$ rows of $L$ and $L'$) coincide.
The only difference between $L$ and $L'$ then is that the $i$-th row (where $1 \leq i \leq t$ such
that $E_i = c_1(\sh{E}_k)$) flips from $(1 + 2\delta_i)\tilde{l}_{\phi(i)}$ to $-(1 + 2 \delta_i)
\tilde{l}_{\phi(i) + 1}$. In other terms, one could think of the $\tilde{l}_j$ to be endowed with
``multiplicities'', i.e. if $A_j = s(\sh{E}_k, \sh{E}_{k'})$ for $k < k'$ and $e_k, e_{k'} \neq 0$
then $|\phi^{-1}(j)|$ is the number of rank zero objects in the exceptional sequence between
$\sh{E}_k$ and $\sh{E}_{k'}$ and correspondingly, $L$ contains as many many additional rows which are
collinear to $\tilde{l}_j = l_{t + j}$. If we disregard the factors $(1 + 2 \delta_i)$, we
should think of these additional rows as copies of $l_{t + j}$. The latter will be fully justified
once we have proven Theorem \ref{deltatheorem} which implies $1 + 2\delta_i = \pm 1$. Then by Corollary
\ref{deltaz} there will be no loss of generality to assume that $1 + 2\delta_i = 1$ for every $i$.
Ultimately, we can view the vectors $\tilde{l}_1, \dots, \tilde{l}_{n - t}$ as the essential
data associated to a toric system of an exceptional sequence, where every $\tilde{l}_i$
comes with a multiplicity for bookkeeping of the rank zero objects in the sequence. In particular,
by Corollary \ref{zerojumpcorollary}, moving rank zero objects around via mutations then is
reflected simply by a ``hopping'' of the corresponding multiplicities. So, as far as the
$\tilde{l}_i$ are concerned, there will in many situations be no harm to pretend that they have
multiplicity one. E.g. we can without loss of generality assume that $\phi$ is constant so
that the rank zero objects form an uninterrupted
sub-sequence anywhere in the sequence.
\end{sub}

\section{Local constellations}\label{localconstellations}

In order to simplify notation and to reduce the number of trivial caveats, in this section we will
make the assumption that  our exceptional sequence $\mathbf{E} = \sh{E}_1, \dots, \sh{E}_n$ contains
no objects of rank zero (and hence $L = \tilde{L}$). In the spirit
of \ref{zerojumpremark}, ``up to multiplicities'' the results extend trivially to the general case.

\begin{sub}\label{circuitintro}
We denote $a_i := e_i^2 e_{i + 1}^2 A_i^2 \in \Z$.
The intersection product in $\chnum^1(X)$ induces a bilinear form on $A$. For any $A_i$, we denote
$A_i^\bot$ the orthogonal complement of $A_i$ in $A$ with respect to this form. Clearly, $A_i^\bot$
contains all $A_j$ for $j \notin \{i - 1, i, i + 1\}$ and therefore the quotient
$A / A_i^\bot$ is isomorphic to $\Z$ and $A / \langle A_j \in A_i^\bot \rangle_\Z \simeq \Z
\oplus F_i$ where $F_i \simeq A_i^\bot / \langle A_j \in A_i^\bot \rangle_\Z$. As we have seen in
Section \ref{galedualsection} (see in particular the proof of Proposition \ref{toricsystemrank}),
the set $\{A_j \mid j \neq i, i + 1\}$ is linearly independent in $A$, hence the set
$\langle A_j \mid j \neq i - 1, i, i + 1\rangle \subseteq A_i^\bot$ has finite index in $A_i^\bot$
and thus $F_i$ is finite. Moreover, by the general properties of Gale duality, this implies that
$l_i$ and $l_{i + 1}$ are linearly independent in $N$.
So we have the following exact commutative diagram:
$$
\xymatrix{
0 \ar[r] & M \ar[rr]^L \ar@{=}[d] && \Z^n \ar[rr]^c \ar@{>>}[d] && A \ar[r] \ar@{>>}[d]  & 0\\
0 \ar[r] & M \ar[rr]^{L_{i - 1, i, i + 1}} && \Z^3 \ar[rr]^{\bar{c}_i} && \Z \oplus F_i \ar[r] & 0.
}
$$
Dualizing the lower row, we get the exact sequence:
$$
\xymatrix{
0 \ar[r] &  \Z \ar^{\bar{c}^T_i}[rr] & & \Z^3 \ar^{L^T_{i - 1, i, i + 1}}[rr] & &
N \ar[r] & F_i \ar[r] & 0
}
$$
where by slight abuse of notation we denote $\bar{c}^T_i$ the dual of $\bar{c}_i$ and we
identify $\Ext^1(\Z \oplus F_i, \Z) \simeq F_i$.
An elementary computation shows
$$ \mathbf{d}_i := (\det(l_i, l_{i + 1}), \det(l_{i + 1}, l_{i - 1}), \det(l_{i - 1}, l_i))^T
\in \ker(L^T_{i - 1, i, i + 1}).$$
and
$\ker(L^T_{i - 1, i, i + 1})$ is generated by $\frac{1}{g_i}\mathbf{d}_i$, where
$g_i = \gcd\{\det(l_i, l_{i + 1}), \det(l_{i + 1}, l_{i - 1}), \det(l_{i - 1}, l_i)\}$.
We leave it as an exercise for the reader to show that $L^T_{i - 1, i, i + 1}$ is surjective iff
$g_i = 1$. Applying this exercise to the exact sequence
$$0 \longrightarrow \Z \xrightarrow{\ \ \ \bar{c}_i^T \ \ \ } \Z^3 \xrightarrow{L^T_{i - 1, i, i + 1}} \im L^T_{i - 1, i, i + 1} \longrightarrow 0$$
we obtain by composition
with the inclusion $\im L^T_{i - 1, i, i + 1} \hookrightarrow N$ that $$g_i = |F_i|.$$
Now we want to determine $\mathbf{d}_i$.
\end{sub}

\begin{proposition}\label{nonflatcircuit}
Up to a choice of orientation in $N$, we have $\mathbf{d}_i = h(e_{i + 1}^2, a_i, e_i^2)$ for every $i$ and some $h \in \Z$ with $h > 0$.
In particular,
$$
\det(l_{i - 1}, l_i) = he_i^2 \quad \text{ and } \quad \det(l_{i + 1}, l_{i - 1}) = h a_i
$$
for every $i$.
\end{proposition}

\begin{proof}
We first fix some $i$.
The projection $A \rightarrow \Z \oplus F_i$ induces a $\Q$-valued linear form on $\Z \oplus F_i$
which is given by $( A_i \cdot - )$ and which vanishes on $F_i$.
We project further and obtain a $\Q$-valued linear form on $\Z$ which is completely determined
by a number $q \in \Q$ such that $A_i \cdot A_j = q \bar{A}_j$ for $j = i - 1, i, i + 1$,
where we denote $\bar{A}_j$ the image of $A_j$ in $\Z$. In particular, we have $q \bar{A}_{i - 1} = 1 / e_i^2$,
$q \bar{A}_i = a_i / e_i^2 e_{i + 1}^2$, and $q \bar{A}_{i + 1} = 1 / e_{i + 1}^2$ that we
can simply interpret as equations of rational numbers.

Now we set  $q =: x / e_i^2 e_{i + 1}^2$ for some $x \in \Q$ and obtain by rearranging the equations:
$$\bar{A}_{i - 1} = e_{i + 1}^2/x, \quad \bar{A}_i = a_i/x, \quad \bar{A}_{i + 1} = e_i^2/x.$$
As the $\bar{A}_j$'s are integral and generate $\Z$, we get $x = \pm g_i'$, where
$g_i' := \gcd\{e_{i + 1}^2, a_i, e_i^2\}$.
Up to the choice of a generator of $A / A_i^\bot \simeq \Z$, we can assume $x = g_i'$. By construction, the short
exact sequence corresponding to the surjection $\Z^3 \twoheadrightarrow \Z = \langle \bar{A}_{i - 1}, \bar{A}_i, \bar{A}_{i + 1} \rangle$
is dual to the short exact sequence $\Z \rightarrowtail \Z^3 \twoheadrightarrow \im L^T_{i - 1, i, i + 1}$ from  \ref{circuitintro},
hence from the discussion there it follows that $\bar{c}^T_i = \frac{1}{g_i'}(e_{i + 1}^2, a_i, e_i^2)^T$.
Morover, it follows that $\mathbf{d}_i = h_i(e_{i + 1}^2, a_i, e_i^2)$, where we
denote $h_i := g_i / g_i' = \det(l_{i - 1}, l_i) / e_i^2 = \det(l_i, l_{i + 1}) / e_{i + 1}^2 \in \Q$.

This implies $h_i = h_{i + 1}$ for every $i$, hence by induction there
exists $h \in \Q$ such that $h_i = h$ for every $i$. Moreover, because for an exceptional sequence of maximal
length the gcd of the $e_i$ is $1$, it follows that $h \in \Z$.
\end{proof}

\begin{corollary}\label{localfan}
\begin{enumerate}[(i)]
\item\label{localfani} We can choose an orientation of $N$ such that for every $i$ the pair $l_i, l_{i + 1}$ is positively oriented,
i.e. $\det(l_i, l_{i + 1}) > 0$.
\item\label{localfanii} For every $i$, the pair  $l_i, l_{i + 1}$ generates a strictly convex polyhedral cone in $N_\Q$.
\item\label{localfaniii} Every triple $l_{i - 1}, l_i, l_{i + 1}$ satisfies the relation $$e_{i + 1}^2 l_{i - 1} + a_i l_i + e_i^2 l_{i + 1} = 0$$
and generates a fan which contains two 2-dimensional cones which intersect in the common facet $\Q_{\geq 0} l_i$.
\end{enumerate}
\end{corollary}

\begin{proof}
(\ref{localfani}) Obvious. We will assume the choice of this orientation for the rest of the proof.

(\ref{localfanii}) Follows from $\det(l_i, l_{i + 1}) = h e_i^2 > 0$ for every $i$.

(\ref{localfaniii}) Because  $\det(l_i, l_{i + 1}) > 0$ and $\det(l_{i - 1}, l_i) > 0$,
the lattice vectors $l_{i - 1}$ and $l_{i + 1}$ lie in the opposite interiors of the half spaces which are bounded by the line
$\Q l_i$. Hence the cones generated by $l_{i - 1}, l_i$ and $l_i, l_{i + 1}$, respectively,
intersect at the common facet $\Q_{\geq 0} l_i$ and therefore form a fan.
\end{proof}

Note that for the rest of the paper we will always implicitly assume that we have chosen an orientation of $N$
which conforms to Corollary \ref{localfan} (\ref{localfani}) and will mention it no further.

The following lemma shows that the case $a_i = 0$ corresponds to a very special configuration.

\begin{lemma}\label{opposite1}
If $a_i = 0$ then $e_i^2 = e_{i + 1}^2$ and $l_{i - 1} = -l_{i + 1}$.
\end{lemma}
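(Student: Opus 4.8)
The plan is to extract the equality $e_i^2 = e_{i+1}^2$ from the integrality of an Euler characteristic, and then read off $l_{i-1} = -l_{i+1}$ from the kernel relation of the circuit set up in \ref{circuitintro}.

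First I would record what the hypothesis $a_i = 0$ says about the pair $\sh{E}_i, \sh{E}_{i+1}$. Since $A_i = s(\sh{E}_i, \sh{E}_{i+1}) = c_1(\sh{E}_i, \sh{E}_{i+1})/(e_i e_{i+1})$ by \ref{nonzerotoricsystem} and \ref{difflemma}, we have $a_i = e_i^2 e_{i+1}^2 A_i^2 = c_1(\sh{E}_i, \sh{E}_{i+1})^2$, so $a_i = 0$ is exactly the vanishing of the quantity denoted $a$ in \ref{mutlemma}. Because $\sh{E}_i, \sh{E}_{i+1}$ is an exceptional pair, the formula of \ref{mutlemma} (coming from \ref{eulersym} (\ref{eulersymii}) together with $\chi(\sh{E}_{i+1}, \sh{E}_i) = 0$) gives $\chi(\sh{E}_i, \sh{E}_{i+1}) = \frac{1}{e_i e_{i+1}}(a_i + e_i^2 + e_{i+1}^2)$, which upon setting $a_i = 0$ becomes $\chi(\sh{E}_i, \sh{E}_{i+1}) = \frac{e_i^2 + e_{i+1}^2}{e_i e_{i+1}}$.

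The key step, and the main obstacle, is that this value must be an integer, being a value of the integer-valued Euler pairing on $K_0(X)$. Writing $e_i = d\alpha$ and $e_{i+1} = d\beta$ with $d = \gcd(e_i, e_{i+1})$ and $\gcd(\alpha, \beta) = 1$, integrality of $\frac{e_i^2 + e_{i+1}^2}{e_i e_{i+1}} = \frac{\alpha^2 + \beta^2}{\alpha\beta}$ forces $\alpha \mid \beta^2$ and $\beta \mid \alpha^2$, hence $\alpha = \beta = 1$ by coprimality. Therefore $e_i^2 = e_{i+1}^2$. It is worth emphasizing that the Gale-dual data alone cannot give this: Proposition \ref{nonflatcircuit} with $a_i = \det(l_{i+1}, l_{i-1}) = 0$ only tells us that $l_{i-1}$ and $l_{i+1}$ are proportional, and it is precisely the arithmetic of the Euler form that pins down the ratio.

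It then remains to deduce $l_{i-1} = -l_{i+1}$. By \ref{circuitintro} and Proposition \ref{nonflatcircuit}, the kernel of $L^T_{i-1, i, i+1}$ is generated by $c^T_{i-1, i, i+1} = (e_i^2, a_i, e_{i+1}^2)$, which yields the linear relation $e_i^2 l_{i-1} + a_i l_i + e_{i+1}^2 l_{i+1} = 0$ among the three rays. Substituting $a_i = 0$ and $e_i^2 = e_{i+1}^2 =: m \neq 0$, this collapses to $m(l_{i-1} + l_{i+1}) = 0$, whence $l_{i-1} = -l_{i+1}$, completing the proof.
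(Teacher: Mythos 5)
Your proof is correct and is essentially the paper's own argument: the paper likewise deduces $\chi(\sh{E}_i, \sh{E}_{i+1}) = \frac{e_i^2 + e_{i+1}^2}{e_i e_{i+1}}$ from \ref{eulersym} (\ref{eulersymii}), applies the same $\gcd$/coprimality divisibility argument to get $e_i^2 = e_{i+1}^2$, and then reads off $l_{i-1} = -l_{i+1}$ from the linear relation among $l_{i-1}, l_i, l_{i+1}$ (your version of that relation has the coefficients $e_i^2$ and $e_{i+1}^2$ attached to the opposite rays compared to the paper's, reflecting an index-convention ambiguity between Proposition \ref{nonflatcircuit} and Section \ref{mutationsection}, but since the two ranks squared are equal this is immaterial). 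The only cosmetic difference is that you justify the relation explicitly via the kernel sequence of \ref{circuitintro}, where the paper simply asserts it.
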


\begin{proof}
By \ref{eulersym} (\ref{eulersymii}) we have $\chi(\sh{E}_i, \sh{E}_{i + 1}) =
\frac{e_i^2 + e_{i + 1}^2}{e_i e_{i + 1}}$. If we denote $g := \gcd\{e_i, e_{i + 1}\}$ and
$e'_i := e_i / g$, $e'_{i + 1} := e_{i + 1} / g$, then  we get immediately
$\chi(\sh{E}_i, \sh{E}_{i + 1}) = \frac{(e'_i)^2 + (e'_{i + 1})^2}{e'_i e'_{i + 1}}$.
But then $e'_i$ divides $(e'_{i + 1})^2$ and $e'_{i + 1}$ divides $(e'_i)^2$. But because
$\gcd\{e'_i, e'_{i + 1}\} = 1$, this implies $(e'_i)^2 = (e'_{i + 1})^2 = 1$,
hence $e_i^2 = e_{i + 1}^2 = g^2$.
For the second assertion, observe that the relation $e_{i + 1}^2 l_{i - 1} + e_i^2 l_{i + 1} = 0$ holds.
\end{proof}

We want now describe what happens to the vectors $l_i$ if we perform a mutation of the sequence
$\mathbf{E}$. If we apply a mutation $L_i \mathbf{E}$ or $R_i \mathbf{E}$, then we can construct
by above procedure a sequence of vectors $l'_1, \dots, l'_n \in N'$, where $N'$ is the dual of
the kernel of
the structural morphism $c'$ corresponding to the new toric system. The following lemma shows
that in terms of the $l_i$, the effect of mutation is local.

\begin{proposition}\label{mutationlocal}
With above notation, we can naturally identify $N$ with $N'$ such that $l'_j = l_j$ for all $j \neq i$.
\end{proposition}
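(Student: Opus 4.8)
The plan is to show that mutation $L_i\mathbf{E}$ (the case $R_i\mathbf{E}$ being entirely analogous) changes the toric system only in the data attached to the indices $i$ and $i+1$, and that this local change in $\Ch^1(X)_\Q$ dualizes to a local change of the lattice vectors $l_j$. First I would invoke the reduction in \ref{zerojumpremark}: by moving rank zero objects out of the way via Proposition \ref{zerojumps}, I may assume there are no rank zero objects, so that the toric system is just $A_1,\dots,A_n$ with $A_i = s(\sh{E}_i,\sh{E}_{i+1})$ and $l_j = \tilde l_j$. Then I would read off from \ref{mutlemma} exactly how the relevant $A_j$ transform. For the left mutation $L_i\mathbf{E}$, which replaces the pair $\sh{E}_i,\sh{E}_{i+1}$ by $L_{\sh{E}_i}\sh{E}_{i+1},\sh{E}_i$, the formulas give $s(L_{\sh{E}_i}\sh{E}_{i+1},\sh{E}_i) = s(\sh{E}_i,\sh{E}_{i+1}) = A_i$ for the divisor sitting in position $i$, while the two neighbouring differences recombine: the new $A'_{i-1}$ and $A'_{i+1}$ are obtained from $A_{i-1},A_i,A_{i+1}$ by the substitution dictated by $c_1(\sh{E}_{i-1},L_{\sh{E}_i}\sh{E}_{i+1})$ and $c_1(\sh{E}_i,\sh{E}_{i+2})$, and every $A_j$ with $j\notin\{i-1,i,i+1\}$ is unchanged.

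The key point is then that the \emph{contractions}, equivalently the structural data defining $N$, are preserved away from position $i$. Concretely, I would compare the two structural surjections $c\colon\Z^n\twoheadrightarrow A$ and $c'\colon\Z^n\twoheadrightarrow A'$ and their duals $L^T,\,(L')^T$ from \ref{dualitydef}. Because the new toric system differs from the old one by an elementary transformation concentrated in rows $i-1,i,i+1$ — and, crucially, because $A'_i = A_i$ as a class while only the \emph{labelling} of the span changes — the column vectors $l'_j$ produced by $(L')^T$ agree with $l_j$ for $j\neq i$. The cleanest way to see this is to observe that $L$ and $L'$ have the same kernel lattice $M\cong\Z^2$ (both equal the relations among the fixed ambient classes in $\Ch^1(X)_\Q$), so there is a canonical identification $N = M^* = N'$; under this identification one checks directly from the transformation formulas of \ref{mutlemma} that $l'_j = l_j$ for all $j\neq i$, the only genuinely new vector being $l'_i$.

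The main obstacle is bookkeeping the identification of $N$ with $N'$ rigorously rather than just formally. The subtlety is that $L$ and $\tilde L$, hence $N$, are defined only up to the choice of splitting in the exact sequences of \ref{dualitydef}, so ``$l'_j = l_j$'' only makes sense after pinning down a natural isomorphism $N\cong N'$. I would resolve this exactly as in the proof of Proposition \ref{multiplicity}: fix a basis of $M$ coming from the relations among the unchanged classes $\{A_j : j\neq i-1,i,i+1\}$ together with one relation involving the stable combination, and note that mutation at $i$ acts on $\Z^n$ by an invertible integral matrix supported on the three coordinates $i-1,i,i+1$ that restricts to the identity on $M$. Dualizing this matrix gives the required canonical isomorphism $N\to N'$, and the support condition forces $l'_j=l_j$ for $j\neq i$. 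The remaining work — verifying that the elementary matrix is indeed supported on only three rows and restricts to the identity on $M$ — is the routine calculation I would relegate to the formulas of \ref{mutationrank} and \ref{mutlemma}.
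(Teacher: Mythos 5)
Your proposal has two genuine gaps, and both occur exactly where the real content of the proposition lies. First, the claim that the class in position $i$ is unchanged (``$A'_i = A_i$ as a class'') is false. By \ref{mutlemma} the \emph{unnormalized} class is preserved, $c_1(L_{\sh{E}_i}\sh{E}_{i+1}, \sh{E}_i) = c_1(\sh{E}_i, \sh{E}_{i+1})$, but the toric system is built from the rank-normalized classes $s(-,-)$, and since the rank changes from $e_{i+1}$ to $R := \rk L_{\sh{E}_i}\sh{E}_{i+1}$ one gets $A'_i = \frac{e_{i+1}}{R} A_i$, together with $A'_{i-1} = A_{i-1} - \frac{e_{i+1}}{R} A_i$ and $A'_{i+1} = A_i + A_{i+1}$. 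Consequently the spans $A = \langle A_1,\dots,A_n\rangle_\Z$ and $A' = \langle A'_1,\dots,A'_n\rangle_\Z$ genuinely differ (compare Example \ref{p2blowupexample2}, where $\frac14 E$ enters the span after mutation), the transformation relating the two systems is rational with determinant $e_{i+1}/R$, not an element of $\operatorname{GL}_n(\Z)$, and the relation lattices $\ker c$ and $\ker c'$ are \emph{different} sublattices of $\Z^n$. A concrete counterexample to your ``same kernel lattice'' claim: for $\sh{O}, \sh{O}(1), \sh{O}(2)$ on $\mathbb{P}^2$ the kernel is $\{m \mid m_1 + m_2 + m_3 = 0\}$, while after $L_1$ (toric system $\frac12 H, 2H, \frac12 H$) it is $\{m \mid m_1 + 4m_2 + m_3 = 0\}$. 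So the identification $N \cong N'$ does not come for free from a shared $M$; this is precisely what must be proved. The paper does it by expressing both $c$ and $c'$ as explicit matrices in the common $\Q$-basis $\{A_j : j \neq i, i+1\}$ (which lies in both spans, since $A_{i-1} = A'_{i-1} + A'_i$), and then verifying by direct computation that the Gale duals can be represented by $l_1, \dots, l_{i-1}, l'_i, l_{i+1}, \dots, l_n$, treating $a_i \neq 0$ and $a_i = 0$ separately.

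Second, you never address the case $R = \rk L_{\sh{E}_i}\sh{E}_{i+1} = 0$. Your opening reduction via \ref{zerojumpremark} removes rank-zero objects from the \emph{original} sequence (which is anyway the standing assumption of this section), but mutation can \emph{create} a rank-zero object, and then every formula above involving $\frac{e_{i+1}}{R}$ is meaningless. In that case the mutated sequence has a toric system of a different shape: only $n-1$ classes $A'_j$, one exceptional class $E_1 = c_1(L_{\sh{E}_i}\sh{E}_{i+1})$, and a function $\phi$ with $\phi(1) = i - 1$; the paper handles this as a separate second case, arguing as in Proposition \ref{multiplicity} that the effect on the Gale dual side is that $l_i$ ``hops'' onto $l_{i-1}$ (becoming a multiplicity), which again gives $l'_j = l_j$ for $j \neq i$. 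Without this case the proposition is unproved precisely in the situation needed later (e.g.\ in the proofs of Theorem \ref{rankone} and Theorem \ref{maintheorem1}, where rank-zero objects are created and destroyed by mutation).
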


\begin{proof}
Without loss of generality, we only consider left mutations $L_i \mathbf{E}$;
the case of right mutations then follows analogously. On the level of toric systems, the effects of
such a mutation
can be described by the formulas of \ref{additivity} and \ref{mutlemma}. For this, we distinguish
two cases, depending on whether $R := \rk L_{\sh{E}_i} \sh{E}_{i + 1}$ is nonzero or not.

In the first case, we obtain a toric system $A_1', \dots, A_n'$, where $A_{i - 1}' = A_{i - 1} -
\frac{e_{i + 1}}{R} A_i$, $A_i' = \frac{e_{i + 1}}{R} A_i$, $A_{i + 1}' = A_i + A_{i + 1}$,
and $A_j' = A_j$ otherwise. In particular, $\langle A'_1, \dots, A'_n \rangle_\Z$ contains
all $A_j$ with $j \neq i, i + 1$. By Proposition \ref{toricsystemrank},
these $A_j$ are linearly independent and therefore form a $\Q$-basis of $\Ch^1(X)_\Q$. By rescaling
these basis vectors by a factor $\frac{1}{e_{i + 1}^2}$, we can represent the $\Q$-linear extension
of the maps $c$ and $c'$ in \ref{dualitydef} by the following matrices:
\begin{equation*}
c =
\begin{pmatrix}
e_3^2 & a_2 & e_2^2 & 0 & \cdots & 0 \\
0 & x_2 & y_2 & e_3^2 & & 0 \\
\vdots & \vdots & \vdots & & \ddots \\
0 & x_{n - 2} & y_{n - 2} & 0 & & e_3^2
\end{pmatrix}
\text{ and }
c' =
\begin{pmatrix}
e_3^2 - \frac{e_3}{R} a_2 & \frac{e_3}{R} a_2 & a_2 + e_2^2 & 0 & \cdots & 0 \\
-\frac{e_3}{R} x_2  & \frac{e_3}{R} x_2 & x_2 + y_2 & e_3^2 & & 0 \\
\vdots & \vdots & \vdots & & \ddots \\
-\frac{e_3}{R} x_{n - 2} & \frac{e_3}{R} x_{n - 2} & x_{n - 2} + y_{n - 2} & 0 & & e_3^2
\end{pmatrix}
\end{equation*}
where by cyclic renumbering we assume without loss of generality that $i = 2$. The
$x_j, y_j$ are determined by the relations $x_j l_2 + y_j l_3 + e_3^2 l_{j + 2} = 0$, in particular
we have $h x_j = \det(l_3, l_{j + 2})$ and $h y_j = \det(l_{j + 1}, l_2)$ for every $j$. Consider first
the case $a_2 \neq 0$. Then $l'_2 = \frac{-1}{a_2}(e_2^2 l'_1 + R^2 l'_3)$. Now it
follows from a direct calculation that we can represent the Gale transforms $l'_1, \dots, l'_n$
by $l_1, l'_2, l_3, \dots, l_n$, in particular, we have $l'_2 = \frac{-1}{a_2}(e_2^2 l_1 + R^2 l_3)$.
In the case $a_2 = 0$, we use any row with $x_j \neq 0$ in order to find the representation
$l'_2 = l_2 + 2 l_1$. As before, we check that we can represent $l'_1, \dots, l'_n$ by
$l_1, l_2', l_3, \dots, l_n$.

In the second case, our toric system is given by $E_1, A_1', \dots, A_{n - 1}', \phi$, where
$A_j' = A_j$ for $j < i$, $A'_i = A_i + A_{i + 1}$ $A_j' = A_{j - 1}$ for $j > i$, and
$\phi(1) = i - 1$. By similar arguments as in the proof of Proposition \ref{multiplicity}, we can
conclude that the effect of the mutation is that the vector $l_i$ ``hops'' onto $l_{i - 1}$.
\end{proof}

The following statement shows that we can get rid of the factor $h$ in Proposition \ref{nonflatcircuit}.

\begin{proposition}\label{primitivity}
\begin{enumerate}[(i)]
\item\label{primitivityi} Let $h$ be as in Proposition \ref{nonflatcircuit}. Then $h = 1$ and thus
$$
\det(l_{i - 1}, l_i) = e_i^2 \quad \text{ and } \quad \det(l_{i + 1}, l_{i - 1}) = a_i
$$
for every $i$.
\item\label{primitivityii} The $l_i$ are primitive lattice vectors.
\end{enumerate}
\end{proposition}

\begin{proof}
(\ref{primitivityi})
In Proposition \ref{nonflatcircuit} we have shown that $h$ divides $\det(l_i, l_{i + 1})$ and $\det(l_{i - 1}, l_{i + 1})$ for every $i$.
We will show that $h \neq 1$ implies that the $l_i$ generate a proper sublattice of $N$, which is a contradiction.
Obviously, for $n \leq 5$ we immediately have that $h$ divides $\det(l_i, l_j)$ for every $i \neq j$, hence
there is nothing to show. Soo without loss of generality, we can assume $n > 5$.
We start with the following claim: for every $i < j$
with $2 < j - i < n$, $h$ divides $\det(l_i, l_j) \cdot e_{i + 2}^2 \cdots e_{j - 1}^2$. To see this, we start with the
equality
$$
e_j^2 l_{j - 2} + a_{j - 1} l_{j - 1} + e_{j - 1}^2 l_j = 0.
$$
Applying $\det(l_i, -)$ to this equation, we get:
$$
e_j^2 \det(l_i, l_{j - 2}) + a_{j - 1} \det(l_i, l_{j - 1}) + e_{j - 1}^2 \det(l_i, l_j) = 0.
$$
Now the claim follows by induction starting with $j = i + 3$. As a corollary of this claim we conclude that
for any $i, j$ with $| i - j | > 2$, $h$ divides
$$
\det(l_i, l_j) \cdot \gcd\{e_{i + 1}^2 \cdots e_{j - 1}^2, e_{j + 2}^2 \cdots e_{i - 1}^2\}.
$$
Now assume $p$ is a prime factor of $h$, which is not a prime factor of any of the $e_i$. Then above
equation shows that $p$ divides $\det(l_i, l_j)$ for every $i \neq j$, hence the sublattice generated
by the $l_i$ in $N$ has an index which is a multiple of $p$, which is absurd, as the $l_i$ generate $N$.
Hence, such a prime factor cannot exist and thus every prime factor of $h$ must show up as a
prime factor of at least one of the $e_i$. To complete our argument, it suffices to show that for a prime
factor $p$ of $h$, $p$ divides $\det(l_i, l_j)$ for any pair $i \neq j$. As before, this is absurd, hence
such a prime factor cannot exist.

We now observe that by Proposition \ref{mutationlocal}, $h$ remains invariant under mutation.
There exists always at least one $e_i$ which is not divisible by $p$ and, by renumbering
we can always arrange that $p$ does not divide $e_3, \dots, e_k$ for some $k > 2$.
Then it follows from our claim above that $p$ divides $\det(l_i, l_j)$ for all $1 \leq i < j \leq k + 1$. If
$k > n - 3$, we are done. Otherwise, we will show that, possibly after mutation, $p$ divides
$\det(l_i, l_j)$ for all $1 \leq i < j \leq k + 2$. The statement is trivial if $p$ does not divide $e_{k + 1}$.
If $p$ is a divisor of $e_{k + 1}$, we perform a right mutation of the pair $\sh{E}_k, \sh{E}_{k + 1}$, resulting
in the pair $\sh{E}_{k + 1}, R_{\sh{E}_{k + 1}} \sh{E}_k$ and moving $l_k$ to $l_k'$ by Proposition \ref{mutationlocal}. Using
our claim on $l_1, \dots, l_{k - 1}, l_k'$ and the properties stated in the beginning of the proof, we see that
$p$ divides $\det(l_i, l_k')$ for every $1 \leq i \leq k + 1$, $i \neq k$, hence $l_1, \dots, l_{k - 1}, l_k', l_{k + 1}$ still
generate a sublattice of index divisible by $p$ in $N$. Now, because $\rk R_{\sh{E}_{k + 1}} \sh{E}_k =
\chi(\sh{E}_k, \sh{E}_{k + 1}) e_{k + 1} - e_k$, where $p$ divides $e_{k +1 }$ and does not divide $e_k$,
it follows that $p$ does not divide $\rk R_{\sh{E}_{k + 1}} \sh{E}_k$. Hence, as we did in the induction step
for above claim, we conclude that $p$ divides $\det(l_1, l_{k + 2})$. Similarly, we conclude that
$p$ divides $\det(l_j, l_{k + 2})$ for $1 < j < k + 2$. By induction it follows that $p$ divides $\det(l_i, l_j)$
for every $i \neq j$, which implies that the $l_i$ cannot generate $N$, which is a contradiction.
Therefore $h$ has no prime factors, i.e. $h = 1$, which completes our proof.

(\ref{primitivityii})
Assume there is one $l_i$ which is not primitive. Without loss of generality, we cyclically renumber
the sequence such that $i = 1$. Then $l_1 = p \tilde{l}_1$ for
some $p > 1$ and $\tilde{l}_1$ is a primitive lattice vector. Then $p$ divides both $e_1^2 =
\det(l_n l_1)$ and $e_2^2 = \det(l_1 l_2)$. Now for every $3 < i < n$ with $e_i \neq 0$, we can
perform right-mutations in order to move $\sh{E}_i$ to the left:
\begin{equation*}
\sh{E}_1, \sh{E}_i, R_{\sh{E}_i} \sh{E}_2, \dots, R_{\sh{E}_i} \sh{E}_{i - 1}, \sh{E}_{i + 1},
\dots, \sh{E}_n.
\end{equation*}
As these mutations do not alter $l_n$ and $l_1$, the exceptional pair $\sh{E}_1, \sh{E}_i$
corresponds to rays $l_n, l_1, l_2'$ and $e_i^2 = \det(l_1, l_2')$. Therefore $p$ divides
$e_i^2$ as well, and hence we get $\gcd\{e_1^2, \dots, e_n^2\} \neq 1$ and thus $\gcd\{e_1, \dots,
e_n\} \neq 1$. But this contradicts the fact that the rank morphism from $\knum(X)$ to $\Z$ is surjective.
\end{proof}

Another special configuration arises if $L_{\sh{E}_i} \sh{E}_{i + 1}$ or $R_{\sh{E}_{i + 1}} \sh{E}_i$ has rank zero.
Recall that in each case we have defect terms $\delta$ and $\delta'$, respectively, in the sense of Definition \ref{defectdef}.

\begin{lemma}\label{zeromutationconfiguration}
If $\rk L_{\sh{E}_i} \sh{E}_{i + 1} = 0$ then
$e_i^4 (1 + 2\delta)^2 l_{i - 1} - e_i^2 l_i + e_i^2 l_{i + 1} = 0$.
If $\rk R_{\sh{E}_{i + 1}} \sh{E}_i = 0$ then
$e_i^2 (1 + 2\delta')^2 l_{i - 1} - e_i^2 l_i + e_i^4 l_{i + 1} = 0$.
\end{lemma}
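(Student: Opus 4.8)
The plan is to read the relation off the three-term ``circuit'' identity satisfied by three consecutive Gale dual vectors, feeding into it the value of $a_i$ forced by the rank-zero hypothesis together with one extra arithmetic consequence of that hypothesis. Recall from Proposition \ref{nonflatcircuit} the Plücker-type relation among $l_{i-1}, l_i, l_{i+1}$; with the indices normalized so that the coefficient of $l_{i+1}$ is $e_i^2$, it reads
\begin{equation*}
e_{i+1}^2\, l_{i-1} + a_i\, l_i + e_i^2\, l_{i+1} = 0 .
\end{equation*}
The whole proof is then a matter of substituting the correct values of $a_i$ and of the leading coefficient.

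First I would treat the case $\rk L_{\sh{E}_i}\sh{E}_{i+1} = 0$. By the specialized rank formula of \ref{mutlemma} we have $\rk L_{\sh{E}_i}\sh{E}_{i+1} = (a_i + e_i^2)/e_{i+1}$, so the hypothesis gives $a_i = -e_i^2$ at once. The remaining point is to identify the coefficient of $l_{i-1}$, and for this I would show $e_{i+1}^2 = e_i^4$. Put $\sh{Z} := L_{\sh{E}_i}\sh{E}_{i+1}$, an exceptional object of rank zero. On one hand \ref{mutlemma} gives $\chi(\sh{Z}, \sh{E}_i) = \chi(\sh{E}_i, \sh{E}_{i+1})$. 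On the other hand $\chi(\sh{E}_i, \sh{Z}) = 0$ since $\sh{E}_i, \sh{E}_{i+1}$ is exceptional, while Proposition \ref{deltazero} yields $\delta_\sh{Z} = 0$ and $c_1(\sh{Z})\, c_1(\sh{E}_i) = -e_i\, c_2(\sh{Z})$; substituting both into the symmetrized formula \ref{eulersym}\,(\ref{eulersymiii}) collapses everything to $\chi(\sh{Z}, \sh{E}_i) = -e_i$. Hence $\chi(\sh{E}_i, \sh{E}_{i+1}) = -e_i$, and since rank is additive along the mutation triangle (\ref{mutationrank}), $e_{i+1} = \chi(\sh{E}_i, \sh{E}_{i+1})\, e_i = -e_i^2$, so that $e_{i+1}^2 = e_i^4$. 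Feeding $a_i = -e_i^2$ and $e_{i+1}^2 = e_i^4$ into the circuit relation produces exactly $e_i^4 l_{i-1} - e_i^2 l_i + e_i^2 l_{i+1} = 0$.

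The case $\rk R_{\sh{E}_{i+1}}\sh{E}_i = 0$ is the mirror image, and I would run the same recipe. The rank formula of \ref{mutlemma} now gives $a_i = -e_{i+1}^2$, and applying the rank-zero argument to $\sh{Z}' := R_{\sh{E}_{i+1}}\sh{E}_i$ — using the right-hand analogues of Proposition \ref{deltazero} and of \ref{eulersym}\,(\ref{eulersymiv}), i.e. Serre duality applied to the reversed exceptional sequence — pins down the remaining rank, after which substitution into the circuit relation yields the stated identity. In effect this is just the $L$-case identity transported under reversal of the sequence, which interchanges left and right mutations together with the roles of $l_{i-1}$ and $l_{i+1}$.

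The hard part is the middle step of the $L$-case: proving $\chi(\sh{E}_i, \sh{E}_{i+1}) = -e_i$, equivalently $e_{i+1} = -e_i^2$. This is exactly where the rank-zero machinery of Section \ref{rankzerosection} is indispensable — one must know that $\sh{E}_i$ sits in the left-orthogonal complement $\mathbf{E}$ of $\sh{Z}$ and that the defect $\delta_\sh{Z}$ vanishes, so that $\chi(\sh{Z}, -)$ genuinely restricts to (minus) the rank function there. Everything after that is bookkeeping, though some care is needed to match the orientation conventions of Proposition \ref{nonflatcircuit} so that the coefficients of $l_{i-1}$ and $l_{i+1}$ land on the correct sides; a convenient sanity check is that, by primitivity of the $l_j$ (Proposition \ref{primitivity}), after dividing by $e_i^2$ the relation must read $l_{i+1} = l_i - e_i^2\, l_{i-1}$.
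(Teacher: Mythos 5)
Your proposal is correct and follows essentially the same route as the paper's own proof: both read off $a_i = -e_i^2$ from the rank formula of \ref{mutlemma}, identify $\chi(\sh{E}_i,\sh{E}_{i+1}) = \chi(L_{\sh{E}_i}\sh{E}_{i+1},\sh{E}_i) = -e_i$ via the rank-zero machinery (the paper cites Corollary \ref{rankzerorank} directly, while you re-derive it from Proposition \ref{deltazero} and \ref{eulersym}\,(\ref{eulersymiii}) --- which is exactly how that corollary is proved), conclude $e_{i+1} = -e_i^2$, and substitute into the three-term circuit relation among $l_{i-1}, l_i, l_{i+1}$. The paper likewise proves only the first equation and leaves the right-mutation case to the mirror argument, just as you do.
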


Note that once Theorem \ref{deltatheorem} is proved we can assume
$(1 + 2 \delta)^2 = 1$ and $(1 + 2 \delta')^2 = 1$, respectively.

\begin{proof}
We only prove the first equation. Observe that $0 = \rk L_{\sh{E}_i} \sh{E}_{i + 1} =
\chi(\sh{E}_i, \sh{E}_{i + 1}) e_i - e_{i + 1}$ by \ref{mutationrank} and
$\chi(\sh{E}_i, \sh{E}_{i + 1}) = \chi(L_{\sh{E}_i} \sh{E}_{i + 1}, \sh{E}_i) = -e_i(1 + 2\delta)$ by
Proposition \ref{chirank} (\ref{chirankiii}), hence $e_{i + 1} = -e_i^2(1 + 2\delta)$ and
with $a_i = \det(l_{i + 1}, l_{i - 1}) = -e_i^2$ the statement follows.
\end{proof}

\begin{remark}
As remarked at the beginning of this section, in order to simplify the presentation we considered only
the case where our exceptional sequence contains only objects of nonzero rank. Given an arbitrary
sequence of maximal length, we can always produce such a sequence by mutation. More precisely, by \ref{zerojumpremark},
we can assume that $e_i = 0$ for $1 \leq i \leq t$ for some $0 \leq t \leq n - 3$ and
$e_i \neq 0$ for $t < i \leq n$. Then by right mutation, we can produce a sequence
\begin{equation*}
\sh{E}_{t + 1}, R_{\sh{E}_{t + 1}} \sh{E}_1, \dots, R_{\sh{E}_{t + 1}} \sh{E}_t, \sh{E}_{t + 2},
\dots, \sh{E}_n
\end{equation*}
with $\rk R_{\sh{E}_{t + 1}} \sh{E}_i = -e_{t + 1}^2 (1 + \delta) \neq 0$ for $1 \leq i \leq t$. The local
configuration of the new $l_i$ then arises iteratively from Lemma \ref{zeromutationconfiguration}.
With Theorem \ref{deltatheorem}, in the case $e_i = \pm 1$ this corresponds (at least locally, so far)
to a series of smooth toric blow-ups.
\end{remark}

For any exceptional pair $\sh{E}_0, \sh{E}_1$, we define inductively
$\sh{E}_{i + 2} = R_{\sh{E}_{i + 1}} \sh{E}_i$ for $i \geq 0$. The Chern classes $s(\sh{E}_i, \sh{E}_{i + 1})$
are all collinear to $c_1(\sh{E}_0, \sh{E}_1)$ in $\Ch^1(X)_\Q$, where the proportionality
is successively given by the quotients of ranks $e_i / e_{i + 1}$. These ranks are determined
in the following
proposition which generalizes a similar statement by Rudakov \cite[\S 4]{Rudakov89}. It is not needed
in the remainder of this paper but it might be of some general interest.

\begin{proposition}\label{rankgeneration}
Let $\sh{E}_0, \sh{E}_1$ be an exceptional pair with $\chi(\sh{E}_0, \sh{E}_1)^2 \neq 4$ and
for $i \geq 0$ define inductively
$\sh{E}_{i + 2} = R_{\sh{E}_{i + 1}} \sh{E}_i$.
Moreover, denote $\alpha_\pm = \frac{1}{2}(\chi(\sh{E}_0, \sh{E}_1) \pm
\sqrt{\chi(\sh{E}_0, \sh{E}_1)^2 - 4})$ the roots of the polynomial
$x^2 - \chi(\sh{E}_0, \sh{E}_1) x + 1$. Then for $i \geq 2$ we get:
\begin{equation*}
e_i =
\frac{\alpha_+^{i + 1} - \alpha_-^{i + 1}}{\alpha_+ - \alpha_-} e_0 -
\frac{\alpha_+^i - \alpha_-^i}{\alpha_+ - \alpha_-} (\chi(\sh{E}_0, \sh{E}_1) e_0 - e_1).
\end{equation*}
\end{proposition}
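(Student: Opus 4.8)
The plan is to reduce the statement to a clean linear recurrence for the ranks $e_i$ and then solve it by standard methods. First I would observe that by \ref{mutationrank} the rank of a right mutation satisfies
\begin{equation*}
\rk R_\sh{F}\sh{E} = \chi(\sh{E}, \sh{F}) f - e.
\end{equation*}
Applying this to the inductive definition $\sh{E}_{i + 2} = R_{\sh{E}_{i + 1}} \sh{E}_i$, I need the quantity $\chi(\sh{E}_i, \sh{E}_{i + 1})$, so the first real task is to show that this Euler characteristic is constant along the sequence, i.e.\ $\chi(\sh{E}_i, \sh{E}_{i + 1}) = \chi(\sh{E}_0, \sh{E}_1) =: m$ for all $i \geq 0$. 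This is exactly the content of the first mutation formula in \ref{mutlemma}, which gives $\chi(\sh{F}, R_\sh{F}\sh{E}) = \chi(\sh{E}, \sh{F})$; iterating it along the pairs $(\sh{E}_i, \sh{E}_{i+1})$ yields the invariance of $m$. Once this is in hand, the rank formula collapses to the homogeneous second-order linear recurrence
\begin{equation*}
e_{i + 2} = m\, e_{i + 1} - e_i.
\end{equation*}

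**Solving the recurrence.**
With the recurrence established, the argument becomes purely algebraic. The characteristic polynomial is precisely $x^2 - m x + 1$, whose roots are the $\alpha_\pm$ defined in the statement; the hypothesis $m^2 = \chi(\sh{E}_0, \sh{E}_1)^2 > 4$ guarantees $\alpha_+ \neq \alpha_-$, so the general solution is $e_i = c_+ \alpha_+^{i} + c_- \alpha_-^{i}$ for constants $c_\pm$ determined by the initial data $e_0, e_1$. I would then simply match the closed-form expression in the statement against this general solution. Writing $U_i := \frac{\alpha_+^i - \alpha_-^i}{\alpha_+ - \alpha_-}$ for the normalized solution with $U_0 = 0$, $U_1 = 1$, the claimed formula reads $e_i = U_{i+1}\, e_0 - U_i\,(m e_0 - e_1)$, and it suffices to check that the right-hand side satisfies the recurrence and takes the correct values at two consecutive indices.

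**The verification step.**
For the base case I would verify the formula directly at $i = 0$ and $i = 1$ (or equivalently confirm $U_0 = 0$, $U_1 = 1$, $U_2 = m$). At $i = 0$ the formula gives $U_1 e_0 - U_0(m e_0 - e_1) = e_0$, and at $i = 1$ it gives $U_2 e_0 - U_1(m e_0 - e_1) = m e_0 - (m e_0 - e_1) = e_1$, as required. Since both $e_i$ and the proposed closed form satisfy the same recurrence and agree at two consecutive indices, they coincide for all $i$; the hypothesis $m^2 > 4$ is used only to ensure $\alpha_+ \neq \alpha_-$ so that $U_i$ is well-defined (the degenerate case $m^2 = 4$ would require the confluent solution $i \alpha^{i-1}$ and is excluded).

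**Main obstacle.**
I expect the only genuinely substantive point to be the invariance of $\chi(\sh{E}_i, \sh{E}_{i + 1})$; everything after that is bookkeeping. It is worth being careful that the mutation formula from \ref{mutlemma} is stated for an exceptional triple, whereas here we mutate an exceptional pair. The resolution is that each pair $\sh{E}_i, \sh{E}_{i+1}$ is exceptional, so $\sh{E}_{i+2} = R_{\sh{E}_{i+1}}\sh{E}_i$ together with $\sh{E}_{i+1}$ forms an exceptional pair again, and the scalar identity $\chi(\sh{E}_{i+1}, R_{\sh{E}_{i+1}}\sh{E}_i) = \chi(\sh{E}_i, \sh{E}_{i+1})$ is precisely the pairwise shadow of that formula. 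Thus the constancy of $m$ propagates cleanly, and the remainder is the elementary solution of a Fibonacci-type recurrence.
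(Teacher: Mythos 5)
Your proof is correct. Both you and the paper hinge on the same key step: reducing the statement to the linear recurrence $e_{i+2} = \chi e_{i+1} - e_i$ with $\chi := \chi(\sh{E}_0,\sh{E}_1)$, whose characteristic polynomial $x^2 - \chi x + 1$ has the roots $\alpha_\pm$. Where you diverge is in both directions around that step. Upstream, the paper simply asserts the recurrence, whereas you justify it: the rank formula of \ref{mutationrank} together with the pairwise consequence $\chi(\sh{F}, R_\sh{F}\sh{E}) = \chi(\sh{E},\sh{F})$ of \ref{mutlemma} (valid because each mutated pair $\sh{E}_{i+1}, \sh{E}_{i+2}$ is again exceptional) gives the constancy of $\chi$ along the sequence and hence the recurrence; this is a genuine, if small, gap in the paper's exposition that your argument fills. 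Downstream, the paper solves the recurrence by forming the generating function $\sum_i e_i x^i = \frac{(e_1 - \chi e_0)x + e_0}{x^2 - \chi x + 1}$, expanding in partial fractions, and reading off coefficients of the geometric series, while you instead verify that the claimed closed form satisfies the same recurrence (since $U_i = \frac{\alpha_+^i - \alpha_-^i}{\alpha_+ - \alpha_-}$ does) and matches the initial data $e_0, e_1$, then invoke uniqueness of solutions of a second-order recurrence with fixed initial conditions. The two solution methods are equivalent in content; yours is more elementary and shorter, the paper's has the mild advantage of producing the closed form rather than merely certifying it. Your remark that the hypothesis $\chi^2 > 4$ is used only to ensure $\alpha_+ \neq \alpha_-$ is also accurate and matches the paper's treatment of the degenerate cases in Remark \ref{rankgeneration2}.
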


\begin{proof}
Follows from standard arguments for solving the reccurence relation
\begin{equation*}
e_{i + 2} = \chi(\sh{E}_0, \sh{E}_1) e_{i + 1} - e_i.
\end{equation*}
for $i \geq 0$.
\end{proof}

\begin{remark}\label{rankgeneration2}
For the two cases with $\chi(\sh{E}_0, \sh{E}_1)^2 = 4$ we can directly use the first
equation in the proof of Proposition \ref{rankgeneration} end get by induction:
\begin{align*}
e_i & = i e_1 + e_0 (1 - i) & \text{ if } \chi(\sh{E}_0, \sh{E}_1) = 2,\\
e_i & = (-1)^{i + 1} (i e_1 - e_0 (1 - i)) & \text{ if } \chi(\sh{E}_0, \sh{E}_1) = -2.
\end{align*}
Moreover, note the periodic behaviour for the cases $\chi(\sh{E}_0, \sh{E}_1)^2 \leq 1$:
\begin{enumerate}[(i)]
\item If $\chi(\sh{E}_0, \sh{E}_1) = 0$ then $e_i =
\begin{cases}
(-1)^{i/2} e_0 & \text{ for $i$ even},\\
(-1)^{(i - 1)/2} e_1 & \text{ for $i$ odd.}
\end{cases}
$
\item If $\chi(\sh{E}_0, \sh{E}_1)^2 = 1$ then $e_i =
\begin{cases}
(-\chi(\sh{E}_0, \sh{E}_1))^{i/3} e_0 & \text{ for } i \equiv 0 (3), \\
(-\chi(\sh{E}_0, \sh{E}_1))^{(i - 1) / 3} e_1 & \text{ for } i \equiv 1 (3), \\
(-\chi(\sh{E}_0, \sh{E}_1))^{(i - 2) / 3} (\chi(\sh{E}_0, \sh{E}_1) e_1 - e_0) & \text{ for } i \equiv 2 (3).
\end{cases}
$
\end{enumerate}
\end{remark}

\section{Mutations}\label{mutationsection}

In this section we consider an exceptional pair $\sh{E}, \sh{F}$ with $e, f \neq 0$ and $a :=
c_1(\sh{E}, \sh{F})^2$. We assume that this pair can be extended to an exceptional sequence of
length $n$, $\sh{E}, \sh{F}, \sh{E}_3, \dots, \sh{E}_n$. Then the set of Gale duals of the associated
toric system contains primitive lattice vectors  $l_e, l, l_f \in N$ such that the following
relation holds:
\begin{equation*}
f^2 l_e + a l + e^2 l_f = 0.
\end{equation*}
Note that by Proposition \ref{primitivity} and Lemma \ref{triplelemma}, $l_e, l, l_f$ are essentially
uniquely determined by the integers $e^2, a, f^2$.

\begin{sub}\label{circumferenceformulas}
As in Definition \ref{circumferencesegmentdef}, we have circumference segments $p_e := l - l_e$, $p_f :=
l_f - l$ and it is convenient to define
\begin{equation*}
w_e := \frac{1}{e} p_e, \quad \text{ and } \quad w_f := \frac{1}{f} p_f.
\end{equation*}
Using \ref{eulersym} and \ref{mutlemma}, we immediately get the following formulas:
\begin{align*}
\det(w_e, w_f) & = \frac{1}{ef}(a + e^2 + f^2) = \chi(\sh{E}, \sh{F}),\\
\det(w_f, l_e) & = \frac{a + e^2}{f} = \rk L_\sh{E} \sh{F},\\
\det(w_e, l_f) & = \frac{a + f^2}{e} = \rk R_\sh{F} \sh{E}.
\end{align*}
Note that mutation can change the orientation of the $w$'s with respect to the $p$'s.
\end{sub}

In our subsequent analysis, the circumference segments will play a crucial role. We start with the
following observation.

\begin{lemma}\label{peri1}
Both $w_e$ and $w_f$ are integral.
\end{lemma}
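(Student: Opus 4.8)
The plan is to work entirely in the rank--two lattice $N$ and to reduce the integrality of $w_e$ and $w_f$ to two elementary congruences, which I would then extract from the Riemann--Roch arithmetic of the pair. First I would fix a $\Z$-basis of $N$ adapted to $l_e$: since $l_e$ is primitive by Proposition \ref{primitivity}, choose $m \in N$ with $\det(l_e, m) = 1$. The defining relation $f^2 l_e + a l + e^2 l_f = 0$ together with Proposition \ref{nonflatcircuit}, applied to the consecutive triple $l_e, l, l_f$, records the determinants $\det(l_e, l) = e^2$, $\det(l, l_f) = f^2$ and $\det(l_e, l_f) = -a$. Hence $l = \alpha l_e + e^2 m$ and $l_f = \gamma l_e - a m$ for integers $\alpha, \gamma$, and primitivity of $l$ and $l_f$ gives $\gcd(\alpha, e) = \gcd(\gamma, a) = 1$. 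Substituting into the formulas of \ref{circumferenceformulas} yields
\[
w_e = \frac{l - l_e}{e} = \frac{\alpha - 1}{e}\, l_e + e\, m, \qquad w_f = \frac{l_f - l}{f} = \frac{\gamma - \alpha}{f}\, l_e - \rho\, m,
\]
where $\rho = \rk L_\sh{E}\sh{F} = (a + e^2)/f \in \Z$ by \ref{mutlemma}. Since the $m$-coefficients $e$ and $-\rho$ are integers, $w_e, w_f \in N$ if and only if $e \mid \alpha - 1$ and $f \mid \gamma - \alpha$.

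Next I would feed in the ``first--order'' integralities. In these coordinates the defining relation reads $a\alpha + e^2\gamma = -f^2$. By \ref{mutlemma} both $\rk R_\sh{F}\sh{E} = (a+f^2)/e$ and $\rk L_\sh{E}\sh{F} = (a+e^2)/f$ are integers, hence $a \equiv -f^2 \pmod e$ and $a \equiv -e^2 \pmod f$. Reducing the relation modulo $e$ gives $a\alpha \equiv -f^2 \pmod e$, and substituting $a \equiv -f^2$ turns this into $f^2(\alpha - 1) \equiv 0 \pmod e$; reducing modulo $f$ gives symmetrically $e^2(\gamma - \alpha) \equiv 0 \pmod f$. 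When $\gcd(e,f) = 1$ we have $\gcd(f^2, e) = \gcd(e^2, f) = 1$, so both congruences collapse at once to $e \mid \alpha - 1$ and $f \mid \gamma - \alpha$; this already settles the lemma in the coprime case.

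The remaining and genuinely delicate case is $d := \gcd(e, f) > 1$, where the divisibilities above are too weak: at a prime $p \mid d$ the factor $f^2$ (resp. $e^2$) absorbs part of $e$ (resp. $f$), and $e \mid \alpha - 1$ does not follow formally. Indeed one can exhibit integers $e, f, a$ satisfying all of the above yet with $\alpha \not\equiv 1 \pmod e$, so such data cannot arise from an exceptional pair, and excluding it is exactly where extra input is needed. I would use Riemann--Roch for the individual objects: since $\sh{E}, \sh{F}$ is exceptional we have $\chi(\sh{F}, \sh{E}) = 0$, so \ref{eulersym}~(\ref{eulersymi}) gives $\chi(\sh{E}, \sh{F}) = -K_X\, c_1(\sh{E}, \sh{F})$; as $c_1(\sh{E}, \sh{F}) = e\, c_1(\sh{F}) - f\, c_1(\sh{E})$ is divisible by $d$, this forces $d \mid \chi(\sh{E}, \sh{F})$. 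Combining this with the constraints on $c_1(\sh{E})^2$ and $c_1(\sh{F})^2$ imposed by the integrality of $c_2(\sh{E})$, $c_2(\sh{F})$ in \ref{eulerchar1}~(\ref{eulerchar1ii}), and expanding $a = c_1(\sh{E}, \sh{F})^2 = e^2 c_1(\sh{F})^2 - 2ef\, c_1(\sh{E})c_1(\sh{F}) + f^2 c_1(\sh{E})^2$, refines the congruence $a\alpha \equiv -f^2 \pmod{e^2}$ enough to promote $f^2(\alpha - 1) \equiv 0 \pmod e$ to the full $\alpha \equiv 1 \pmod e$. I expect this prime--by--prime bookkeeping at the primes dividing $d$ --- converting the quadratic Riemann--Roch congruences into the linear divisibility $e \mid \alpha - 1$ --- to be the crux of the argument; the statement for $w_f$ then follows by interchanging the roles of $\sh{E}$ and $\sh{F}$. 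The degenerate configuration $a = 0$ (where $e^2 = f^2$ and $l_e, l_f$ are antipodal, cf. \ref{opposite1}) should be checked separately and directly.
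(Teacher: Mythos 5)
Your setup, the reduction of the lemma to the two congruences $e \mid \alpha - 1$ and $f \mid \gamma - \alpha$, and the complete treatment of the coprime case are all correct, and they reproduce the opening moves of the paper's own proof (which, in coordinates $l_e = (1,0)$, $l = (x, e^2)$, derives exactly $e \mid a(x-1)$ and then $e \mid f^2(x-1)$ from integrality of $\chi(\sh{E},\sh{F})$ and of $\rk R_\sh{F}\sh{E} = (a+f^2)/e$). But the case $\gcd(e,f) > 1$, which you yourself identify as the crux, is not proved: you only state an expectation that the quadratic Riemann--Roch congruences (integrality of $c_2$ via \ref{eulerchar1}, $d \mid \chi(\sh{E},\sh{F})$, the expansion of $a$) can be bootstrapped into the linear divisibility. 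That expectation is where the proof is missing, and it is not a repairable bookkeeping issue: it cannot work from pair-level data alone. The invariants $e, f, a, c_1(\sh{E})^2, c_1(\sh{F})^2, c_1(\sh{E})c_1(\sh{F}), \chi$ never mention $\alpha$; the only constraints tying $\alpha$ to them are the relation $a\alpha + e^2\gamma = -f^2$ and primitivity. Whenever every prime of $e$ divides $f$ and $e^2 \mid a$ --- which happens automatically when $e = \pm f$ (so $a = e^2(c_1(\sh{F})/f - c_1(\sh{E})/e)^2 \cdot f^2/f^2$, in particular in the flat/antipodal configuration of Lemma \ref{opposite1}), and also for genuinely occurring pairs such as the ranks $(2,-4)$ pair of Examples \ref{p2blowupexample}--\ref{p2blowupexample2} --- the relation is vacuous modulo $e$, and primitivity only forces $\alpha$ to be a unit mod $e$. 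For $|e| = |f| \geq 3$ every unit is then admissible for an abstract configuration with the given numerics, so no congruence manipulation of the pair's Riemann--Roch data can single out $\alpha \equiv 1 \pmod e$. The same objection applies to your plan to check the degenerate case $a = 0$ ``directly'': there the relation $f^2 l_e + e^2 l_f = 0$ does not involve $l$ at all.

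The missing ingredient is global, and this is exactly how the paper closes the argument. The standing assumption of the section is that the pair extends to a numerically exceptional sequence $\sh{E}, \sh{F}, \sh{E}_3, \dots, \sh{E}_n$ of full length $n = \rk K_0(X)$. Having shown $e \mid \gcd\{a, e^2, f^2\}\, p_e$, the paper then moves any other member $\sh{E}_i$ with $e_i \neq 0$ next to $\sh{E}$ by right mutations (which transport $\sh{E}_i$ unchanged, and which by Proposition \ref{mutationlocal} leave $l_e$, $l$ and hence $p_e$ untouched), obtaining $e \mid \gcd\{c_1(\sh{E},\sh{E}_i)^2, e^2, e_i^2\}\, p_e$ for every $i$. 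Since the rank map $K_0(X) \to \Z$ is surjective, $\gcd\{e, f, e_i \mid e_i \neq 0\} = 1$, so combining these divisibilities over all $i$ gives $e \mid p_e$, i.e. $w_e \in N$, and symmetrically $w_f \in N$. Note that this global input is not optional decoration: it is the same mechanism that underlies Proposition \ref{primitivity} (which you invoke for primitivity of $l_e$), and without it the statement you are trying to prove is simply not determined by the pair. To complete your proof you would need to import exactly this argument --- the other $n-2$ objects, the locality of mutations, and the gcd-of-ranks-equals-one fact --- at which point your proof becomes the paper's.
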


\begin{proof}
After a choice of coordinates we can
assume without loss of generality that $l_e = (1, 0)$, $l = (x, e^2)$,
$l_f = (y, -a)$ for some integers $x, y$. Then $p_e = (x - 1, e^2)$ and
$p_f = (y - x, -a - e^2)$ and $ef \chi(\sh{E}, \sh{F}) = \det(p_e, p_f)
= e^2(1 - y) - a(x - 1)$. So, in particular, $e$ divides $a(x - 1)$ and
therefore $e$ divides $a \cdot p_e$. Moreover, as $e$ divides $a + f^2$,
it also divides $(a + f^2) p_e$ and therefore also $f^2 \cdot p_e$. Hence,
$e$ divides $\gcd\{a, e^2, f^2\} \cdot p_e$. Now, via mutation, we can
replace $\sh{F}$ by any $\sh{E}_i$ with $e_i \neq 0$. In the fan, this
leaves $l_e$, $l$ and $p_e$ unchanged, and we obtain analogously that
$e$ divides $\gcd\{c_1(\sh{E}, \sh{E}_i)^2, e^2, e_i^2\} \cdot p_e$.
So we get that
$e$ divides $\gcd\{e^2, f^2, e_i^2 \mid e_i \neq 0\} \cdot p_e$. But
$\gcd\{e^2, f^2, e_i^2 \mid e_i \neq 0\} = 1$ and the assertion follows for
$w_e$ and, by exchanging the roles of $\sh{E}$ and $\sh{F}$, also
for $w_f$.
\end{proof}

By Proposition \ref{mutationlocal}, the effect of mutation is local in the sense that
if we apply a mutation to the pair $\sh{E}, \sh{F}$, say, then the triple $l_e, l, l_f$ gets transformed
to a triple $l_e, l', l_f$ and the other $l_i$ remain constant. The transformation of $l$ to $l'$ can
be described nicely with help of the circumference segments.
We consider the mutated pairs $\sh{F}, R_\sh{F} \sh{E}$ and $L_\sh{E} \sh{F}, \sh{E}$, by which
$l$ gets transformed to some $l'$ and $l^{\prime\prime}$, respectively, which satisfy the following relations:
\begin{equation*}
\left(\frac{a + f^2}{e}\right)^2 l_e + a l' + f^2 l_f = 0 \quad \text{ and } \quad
e^2 l_e + a l^{\prime\prime} + \left(\frac{a + e^2}{f}\right)^2 l_f = 0.
\end{equation*}
We then get the following transformation formulas.

\begin{lemma}\label{peri2}
We have $l' = l_f + \frac{a + f^2}{e} w_e = l_e + f (\det(w_e, w_f) w_e + w_f)$ and
$l^{\prime\prime} = l_e - \frac{a + e^2}{f} w_f = l_f - e(\det(w_e, w_f) w_f + w_e)$.
\end{lemma}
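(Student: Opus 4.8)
The plan is to verify the two displayed expressions for $l'$ and $l''$ directly from the defining relations and the formulas already collected in \ref{circumferenceformulas}. I would treat the two cases symmetrically, focusing on $l'$ (the right-mutation of the pair $\sh{E},\sh{F}$), since $l''$ follows by the same argument with the roles of $\sh{E}$ and $\sh{F}$ interchanged and signs adjusted accordingly. The starting point is the relation defining $l'$, namely
\begin{equation*}
\left(\frac{a + f^2}{e}\right)^2 l_e + a\, l' + f^2 l_f = 0,
\end{equation*}
together with the original relation $f^2 l_e + a\,l + e^2 l_f = 0$. My aim is to solve for $l'$ and rewrite the result in terms of the circumference segments $p_e = l - l_e$, $p_f = l_f - l$ and the integral vectors $w_e = e q_e = p_e/e$, $w_f = f q_f = p_f/f$.

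\textbf{First form of the identity.} For the equality $l' = l_f + \frac{a+f^2}{e} w_e$, I would argue as follows. Since $\rk R_\sh{F}\sh{E} = (a+f^2)/e$ by \ref{mutlemma} and $\det(w_e, l_f) = (a+f^2)/e$ by \ref{circumferenceformulas}, the claim is essentially that $l' - l_f$ is the vector of length $\rk R_\sh{F}\sh{E}$ in the direction of $w_e$. Both $l'$ and $l_f$ are genuine lattice vectors, and $l_e, l', l_f$ satisfy a linear relation with the same middle coefficient $a$ as the triple $l_e, l, l_f$; hence $l'$ lies on the line through $l$ parallel to the fixed direction $p_e \parallel w_e$ (this is the collinearity already exploited in Proposition \ref{mutationlocal}). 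I would then pin down the scalar by taking the determinant $\det(w_e, l' - l_f)$ against $w_e$ and matching it with the defining relation, or equivalently by substituting $l' = l_f + \lambda w_e$ into the defining relation and solving the resulting scalar equation for $\lambda$; the formulas of \ref{circumferenceformulas} force $\lambda = (a+f^2)/e$.

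\textbf{Second form and the symmetric case.} For the alternative expression $l' = l_e + f(\det(w_e, w_f) w_e + w_f)$, I would substitute $\det(w_e, w_f) = \chi(\sh{E},\sh{F}) = \frac{1}{ef}(a + e^2 + f^2)$ from \ref{circumferenceformulas} and expand. Writing $l' - l_e = (l' - l_f) + (l_f - l_e) = \frac{a+f^2}{e} w_e + (p_e + p_f) = \frac{a+f^2}{e} w_e + e w_e + f w_f$, I collect the $w_e$-coefficient as $\frac{a+f^2}{e} + e = \frac{a + e^2 + f^2}{e} = f \det(w_e,w_f)$, which matches $f\det(w_e,w_f) w_e + f w_f$ exactly. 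The case of $l''$ is then handled identically: using $\rk L_\sh{E}\sh{F} = (a+e^2)/f$ and $\det(w_f, l_e) = (a+e^2)/f$, one shows $l'' = l_e - \frac{a+e^2}{f} w_f$ and then rewrites $l'' - l_f = -(l_e - l'') - (l_f - l_e)$ in the same manner. I expect the only mild obstacle to be bookkeeping of signs and orientations—the remark in \ref{circumferenceformulas} that mutation can flip the orientation of the $w$'s relative to the $p$'s and $q$'s means I must check the sign conventions carefully rather than assume them—but no genuine difficulty arises, since every quantity in sight is already expressed through the determinant formulas of \ref{circumferenceformulas}.
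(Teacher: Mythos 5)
Your computations are sound and both displayed equalities are reached, but your route is genuinely different from the paper's. The paper never solves the defining relation: it invokes the uniqueness statement of Lemma \ref{triplelemma} --- $l'$ is pinned down by the three pairwise volumes $\det(l_e,l')=f^2$, $\det(l',l_f)=\left(\frac{a+f^2}{e}\right)^2$, $\det(l_f,l_e)=a$ --- and then merely checks that the candidate $l_f+\frac{a+f^2}{e}w_e$ has the right two determinants, both being immediate from \ref{circumferenceformulas}; the second equality is left as ``a simple rearrangement of terms'', which is precisely the computation you carry out explicitly (collecting $\frac{a+f^2}{e}+e=\frac{a+e^2+f^2}{e}=f\det(w_e,w_f)$). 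You instead treat the linear relation $\left(\frac{a+f^2}{e}\right)^2 l_e + a\,l' + f^2 l_f = 0$ as determining $l'$ and solve it with the ansatz $l'=l_f+\lambda w_e$. This buys directness and avoids \ref{triplelemma} altogether, at the price of needing $a\neq 0$ (so that $l_e,l_f$ form a $\Q$-basis and the relation has a unique solution); the paper's verification-plus-uniqueness route keeps the computation to two one-line determinant evaluations.

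Two soft spots you should repair. First, your collinearity justification is garbled: the formula asserts that $l'$ lies on the line through $l_f$ (not through $l$) in the direction $w_e$, and this parallelism does not follow ``immediately'' from the two relations sharing the middle coefficient $a$ --- it is itself a short computation, essentially the one you are trying to shortcut. Likewise, pinning the scalar via $\det(w_e,\,l'-l_f)$ cannot work: once $l'-l_f$ is parallel to $w_e$, that determinant vanishes identically; you must pair against $l_e$ or $l_f$, e.g. $\det(l_e,\,l'-l_f)=\lambda e$ together with $\det(l_e,l')=f^2$ and $\det(l_e,l_f)=-a$ gives $\lambda=\frac{a+f^2}{e}$. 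Your alternative --- substitute the ansatz into the relation, use linear independence of $l_e,l_f$, and read off $\lambda$ from the $l_f$-coefficient --- is correct and is what actually carries the proof; lead with that. Second, the flat case $a=0$ is outside your argument: there, by Lemma \ref{opposite1}, $e^2=f^2$ and $l_e=-l_f$, the defining relation no longer involves $l'$, and no ansatz can be ``solved''; one must fall back on the explicit Gale-dual computation in the proof of Proposition \ref{mutationlocal} (its case $a_2=0$), whose output is consistent with the stated formulas. In fairness, the paper's own proof glosses over this degenerate case in exactly the same way.
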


\begin{proof}
We only prove the formulas for $l'$; the case $l^{\prime\prime}$ then follows analogously.
By \ref{triplelemma}, $l'$ is completely determined by $a$ and the volumes $f^2$ and
$\left(\frac{a + f^2}{e}\right)^2$ relative to $l_f$ and $l_e$, respectively.
Then the first equality follows from
\begin{equation*}
\det(l_f + \frac{a + f^2}{e} w_e, l_f) = \left(\frac{a + e^2}{f}\right)^2
\quad \text{ and } \quad
\det(l_e, l_f + \frac{a + f^2}{e} w_e) = f^2,
\end{equation*}
which both are immediate consequences of formulas \ref{circumferenceformulas}.
The second equality follows from a simple rearrangement of terms.
\end{proof}

For $\chi(\sh{E}, \sh{F})^2 \leq 1$ we see that the transformations of $w_e$ and $w_f$ reflect the
periodic behaviour under subsequent permutations which we have observed in Remark \ref{rankgeneration2}.
For $\chi(\sh{E}, \sh{F})^2 > 0$ we observe the following.

\begin{corollary}\label{mutationtransform}
With above notation, consider the pair $\sh{F}, R_\sh{F} \sh{E}$ and assume that
$\chi(\sh{E}, \sh{F}) \neq 0$. If $\rk R_\sh{F} \sh{E} \neq 0$, then $w_e$ and $w_f$
transform in the sublattice which they generate by the matrix
$\left(
\begin{smallmatrix}
\chi(\sh{E}, \sh{F}) & -1 \\
1 & 0 
\end{smallmatrix}
\right)$. If $\rk L_\sh{E} \sh{F} \neq 0$, $w_e$ and $w_f$ transform by
$\left(
\begin{smallmatrix}
0 & 1 \\
-1 & \chi(\sh{E}, \sh{F}) 
\end{smallmatrix}
\right)$
\end{corollary}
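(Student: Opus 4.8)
The plan is to read the transformation directly off Lemma \ref{peri2}, which already expresses the mutated middle ray $l'$ (resp.\ $l''$) in terms of $l_e, l_f$ and the weighted circumference segments $w_e, w_f$. First I would record what the circumference data of the mutated pair actually is. For the right mutation the new pair is $\sh{F}, R_\sh{F}\sh{E}$, of ranks $f$ and $R := \rk R_\sh{F}\sh{E} = \frac{a + f^2}{e}$; by Proposition \ref{mutationlocal} the outer rays $l_e$ and $l_f$ are unchanged and only $l$ is replaced by $l'$. Hence the new weighted segments are
\begin{equation*}
w_e' = \frac{l' - l_e}{f}, \qquad w_f' = \frac{l_f - l'}{R},
\end{equation*}
where dividing by $R$ is legitimate precisely because of the hypothesis $\rk R_\sh{F}\sh{E} \neq 0$.

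Next I would substitute the two expressions for $l'$ from Lemma \ref{peri2}. From $l' = l_e + f\big(\det(w_e, w_f) w_e + w_f\big)$ together with $\det(w_e, w_f) = \chi(\sh{E}, \sh{F}) =: \chi$ one obtains $w_e' = \chi w_e + w_f$, while from $l' = l_f + \frac{a + f^2}{e} w_e = l_f + R w_e$ one obtains $w_f' = -w_e$. Writing the segments as the columns of $(w_e \mid w_f)$, this reads
\begin{equation*}
(w_e' \mid w_f') = (w_e \mid w_f) \begin{pmatrix} \chi & -1 \\ 1 & 0 \end{pmatrix},
\end{equation*}
which is the asserted matrix. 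The hypothesis $\chi(\sh{E}, \sh{F}) \neq 0$ enters here to guarantee that $\det(w_e, w_f) = \chi \neq 0$, so that $w_e, w_f$ span a rank-two sublattice and the $2 \times 2$ matrix statement is meaningful. The left mutation $L_\sh{E}\sh{F}, \sh{E}$ is handled identically from the second formula of Lemma \ref{peri2}, yielding $w_e'' = -w_f$ and $w_f'' = \chi w_f + w_e$, and hence the matrix $\begin{pmatrix} 0 & 1 \\ -1 & \chi \end{pmatrix}$.

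The one point that needs genuine care --- and the likely source of error --- is the bookkeeping of orientations and signs: as noted after \ref{circumferenceformulas}, mutation can reverse the orientation of the $w$'s relative to the segments $p, q$, and one must also keep straight that for the mutated pair the role of ``$e$'' is played by $f$ and that of ``$f$'' by $R$ (with $R$ possibly negative, which flips the sign introduced by the division above). A convenient internal check that the signs have been tracked correctly is that both matrices lie in $SL_2(\Z)$: indeed $\det(w_e', w_f') = \det(w_e, w_f) = \chi$, in agreement with $\chi(\sh{F}, R_\sh{F}\sh{E}) = \chi(\sh{E}, \sh{F})$ from \ref{mutlemma}, which simultaneously confirms that the sublattice generated by the weighted segments is preserved by the mutation.
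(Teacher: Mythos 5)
Your proposal is correct and follows exactly the route the paper intends: the statement is presented as a direct consequence of Lemma \ref{peri2}, and your computation --- substituting both expressions for $l'$ (resp.\ $l''$) into the weighted circumference segments $\frac{l'-l_e}{f}$ and $\frac{l_f-l'}{R}$ of the mutated pair, using $\det(w_e,w_f)=\chi(\sh{E},\sh{F})$ from \ref{circumferenceformulas} --- is precisely that derivation, with the hypotheses $\rk R_\sh{F}\sh{E}\neq 0$ and $\chi(\sh{E},\sh{F})\neq 0$ correctly identified as what makes the division and the rank-two sublattice statement legitimate. Your $SL_2(\Z)$ consistency check via $\chi(\sh{F},R_\sh{F}\sh{E})=\chi(\sh{E},\sh{F})$ is a nice confirmation of the sign bookkeeping.
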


\begin{remark}
For $\chi(\sh{E}, \sh{F})^2 > 4$, Corollary \ref{mutationtransform} gives us another method for
computing the terms
$\frac{\alpha_+^i - \alpha_-^i}{\alpha_+ - \alpha_-}$ of Corollary \ref{mutationtransform} for
$\sh{E}_0 = \sh{E}$ and $\sh{E}_1 = \sh{F}$. It follows by induction that for $i \geq 1$,
$\frac{\alpha_+^i - \alpha_-^i}{\alpha_+ - \alpha_-}$ coincides
with the upper left entry of the $(i - 1)$-st power of the matrix $\left(
\begin{smallmatrix}
\chi(\sh{E}, \sh{F}) & -1 \\
1 & 0 
\end{smallmatrix}
\right)$
\end{remark}

The following statement is less nice but stronger, as it in particular implies that $w_e$ and $w_f$
do not change their lattice length in $N$ under mutation.

\begin{lemma}\label{peri3}
We have $\det(w_e, w_f) w_e + w_f = G w_f$ and $\det(w_e, w_f) w_f + w_e = G' w_e$ for
$G, G' \in \operatorname{GL}_\Z(N)$.
\end{lemma}

\begin{proof}
We consider only the first equality; the second equality follows analogously. By construction,
$\det(w_e, wf_f) w_e + w_f = \frac{1}{f}(l' - l_e)$ which is a lattice vector by Lemma \ref{peri1}.
We show that its lattice length coincides with that of $w_f$, which then implies the assertion.
For this, we consider the exceptional triples $\sh{E}_0, \sh{F}, R_\sh{F} \sh{E}$ and
$L_{\sh{E}_0} \sh{E}, \sh{E}_0, \sh{F}$ (for simplicity, we assume that $e_0 \neq 0$, which can
always be arranged). In the first case, the exceptional pair $\sh{E}_0, \sh{F}$ gives rise to
a relation $f^2 l_{n - 1} + b l_e + e_0^2 l' = 0$, and in the second case we get
$f^2 l_e' + b l + e_0^2 l_f = 0$, where $l_e'$ is the mutation of $l_e$ corresponding to
$L_{\sh{E}_0} \sh{E}$. As the triples $l_{n - 1}, l_e, l'$ and $l_e', l, l_f$ correspond to the
same triple of volumes $f^2, b, e_0^2$, it follows by Lemma \ref{triplelemma} that
$l' - l_e = G \cdot (l_f - l)$ for some $G \in \operatorname{GL}_\Z(N)$, which implies the assertion.
\end{proof}

\begin{sub}\label{convexpictures}
We can consider the value $ef \chi(\sh{E}, \sh{F}) = \det(p_e, p_f) = a + e^2 + f^2$ as
a lattice-geometric measure of the convexity of the configuration of lattice vectors $l_e, l, l_f$.
More precisely, if we consider the angle between $p_e$ and $p_f$ at $l$, then we have three possibilities:
\begin{align*}
a + e^2 + f^2 > 0 & & \text{(convex)}, \\
a + e^2 + f^2 < 0 & & \text{(concave)}, \\
a + e^2 + f^2 = 0 & & \text{(flat)}.
\end{align*}
Figure \ref{convexpic} schematically depicts these possibilities for the case $a < 0$.
\end{sub}

\begin{figure}[ht]
\centering
\begin{tikzpicture}

\draw[->, thick] (-5, 0) -- (-2, 1);
\draw[->, thick] (-5, 0) -- (-1.5, 3.5);
\draw[->, thick] (-5, 0) -- (-4, 3);

\draw[->, thick, red] (-2, 1) -- (-1.5, 3.5);
\draw[->, thick, red] (-1.5, 3.5) -- (-4, 3);

\draw (-1.8, .8) node { $l_e$ };
\draw (-1.3, 3.45) node { $l$ };
\draw (-4.3, 3) node { $l_f$};
\draw[red] (-1.51, 2.05) node { $p_e$ };
\draw[red] (-2.85, 3.5) node { $p_f$ };

\draw (-3, .2) node {convex};

\draw[->, thick] (0, 0) -- (3, 1);
\draw[->, thick] (0, 0) -- (2, 2);
\draw[->, thick] (0, 0) -- (1, 3);

\draw[->, thick, red] (3, 1) -- (2, 2);
\draw[->, thick, red] (2, 2) -- (1, 3);

\draw (3.2, .8) node { $l_e$ };
\draw (1.9, 1.65) node { $l$ };
\draw (.7, 3) node { $l_f$};
\draw[red] (2.85, 1.65) node { $p_e$ };
\draw[red] (1.85, 2.65) node { $p_f$ };

\draw (2, .2) node {flat};

\draw[->, thick] (5, 0) -- (8, 1);
\draw[->, thick] (5, 0) -- (6.5, 1.5);
\draw[->, thick] (5, 0) -- (6, 3);

\draw[->, thick, red] (8, 1) -- (6.5, 1.5);
\draw[->, thick, red] (6.5, 1.5) -- (6, 3);

\draw (8.2, .8) node { $l_e$ };
\draw (6.4, 1.15) node { $l$ };
\draw (5.7, 3) node { $l_f$};
\draw[red] (7.35, 1.55) node { $p_e$ };
\draw[red] (6.55, 2.35) node { $p_f$ };

\draw (7, .2) node {concave};

\end{tikzpicture}
\caption{Convex, flat, and concave configurations of $l_e$, $l$, $l_f$ for $a < 0$.}\label{convexpic}
\end{figure}
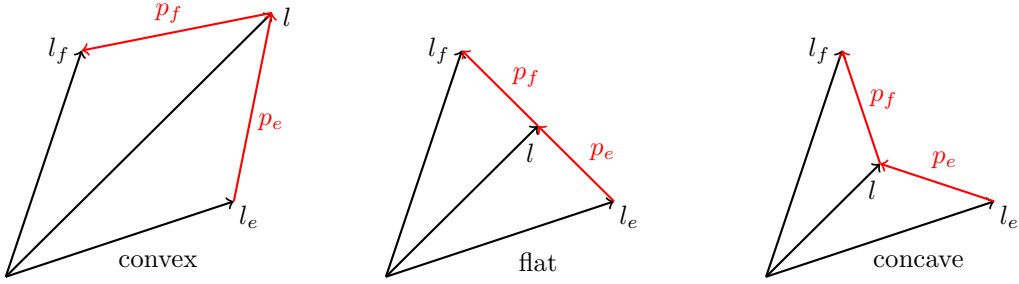

\begin{definition}
Corresponding to above inequalities, we call an exceptional pair $\sh{E}, \sh{F}$ with
$e, f \neq 0$ either
{\em convex}, {\em concave}, or {\em flat}. We call the value $a + e^2 + f^2$ the
{\em convexity} of the pair $\sh{E}, \sh{F}$.
\end{definition}

The following lemma shows that very often we can decrease the convexity of an exceptional pair
by mutation.

\begin{lemma}\label{concavemutation}
Let $\sh{E}, \sh{F}$ be a convex exceptional pair and assume that $a < 0$.
Then either $\max\{(\rk L_\sh{E} \sh{F})^2,$ $e^2\} < \max\{e^2, f^2\}$ or $\max\{(\rk R_\sh{F} \sh{E})^2, f^2\} < \max\{e^2, f^2\}$.
\end{lemma}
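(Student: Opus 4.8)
The plan is to reduce everything to the single number $\chi := \chi(\sh{E}, \sh{F})$ together with the two bounds that convexity and the hypothesis $a < 0$ impose on it. First I would normalize. Since $a = c_1(\sh{E}, \sh{F})^2$, the convexity $a + e^2 + f^2$, and the squared ranks $(\rk L_\sh{E}\sh{F})^2$, $(\rk R_\sh{F}\sh{E})^2$ are all invariant under the shifts $\sh{E} \mapsto \sh{E}[1]$ and $\sh{F} \mapsto \sh{F}[1]$, there is no loss of generality in assuming $e, f > 0$. Moreover the whole statement is invariant under interchanging the roles of $e$ and $f$ in the formulas of \ref{mutationrank} and \ref{mutlemma}: this swaps $\rk L_\sh{E}\sh{F} = \chi e - f$ with $\rk R_\sh{F}\sh{E} = \chi f - e$ and leaves both the convexity $a + e^2 + f^2$ and $\max\{e^2, f^2\}$ fixed. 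Hence I may also assume $f \geq e$, so that $\max\{e^2, f^2\} = f^2$, and it then suffices to show that the left mutation already does the job, namely $(\rk L_\sh{E}\sh{F})^2 < f^2$.

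Next I would extract the two bounds on $\chi$. By \ref{mutlemma} we have $\chi = \frac{1}{ef}(a + e^2 + f^2)$. Convexity $a + e^2 + f^2 > 0$ together with $ef > 0$ gives $\chi > 0$. On the other hand, the hypothesis $a < 0$ rewrites as $ef\,\chi = a + e^2 + f^2 < e^2 + f^2$, so
\[
0 < \chi < \frac{e^2 + f^2}{ef} = \frac{e}{f} + \frac{f}{e}.
\]
This is the heart of the argument: convexity pins $\chi$ away from zero on one side, while negativity of $a$ caps it by the symmetric quantity $\tfrac{e}{f} + \tfrac{f}{e}$ on the other.

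Finally I would compare this cap with $2f/e$. Since $e \leq f$, clearing denominators shows $\frac{e}{f} + \frac{f}{e} \leq \frac{2f}{e}$ (equivalently $e^2 \leq f^2$), so the displayed inequality yields $0 < \chi < \frac{2f}{e}$, that is $0 < \chi e < 2f$. Subtracting $f$ gives $-f < \chi e - f < f$, whence $(\chi e - f)^2 < f^2$. Since $\rk L_\sh{E}\sh{F} = \chi e - f$ by \ref{mutationrank}, this is precisely $(\rk L_\sh{E}\sh{F})^2 < f^2 = \max\{e^2, f^2\}$, as required. The case $e \geq f$ is handled by the symmetric interchange and then produces the analogous inequality for $\rk R_\sh{F}\sh{E}$ instead.

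I do not anticipate a genuine obstacle here: once the normalizations are in place the argument is purely the arithmetic of the two bounds on $\chi$. The only points demanding care are at the very start—checking that passing to positive ranks and interchanging the roles of $\sh{E}$ and $\sh{F}$ really leave $a$, the convexity, and the squared ranks unchanged—and making sure that the strictness of $a < 0$ (as opposed to $a \leq 0$) is exactly what upgrades the comparison to the strict inequality $(\rk L_\sh{E}\sh{F})^2 < f^2$.
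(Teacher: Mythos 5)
Your proof is correct and is essentially the paper's own argument in a lightly rearranged form: the paper also reduces (after the same WLOG $0 < e^2 \leq f^2$) to the two strict inequalities $\frac{a+e^2}{f} < f$ (from $a<0$) and $-\frac{a+e^2}{f} < f$ (from convexity), which are exactly your bounds $\chi e < 2f$ and $\chi e > 0$ written for $\rk L_\sh{E}\sh{F} = \chi e - f$. The only difference is expository: you make the shift-normalization and the $e \leftrightarrow f$ symmetry explicit, which the paper compresses into ``without loss of generality.''
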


\begin{proof}
We can assume without loss of generality that $0 < e^2 \leq f^2$. Then we have to show that
both $\frac{a + e^2}{f} < f$ and $-\frac{a + e^2}{f} < f$, which both follow trivially from
the assumptions.
\end{proof}

\begin{sub}\label{flatnesspreserved}
Observe that flatness implies $a < 0$ and that mutation preserves flatness. In particular,
for a flat pair
we have $\rk L_\sh{E} \sh{F} = -f$ and $\rk R_\sh{F} \sh{E} = -e$ and it can never occur that iterative
mutations of a non-flat pair can result in a flat pair.
\end{sub}

\begin{sub}\label{convexdecrease}
So, the lemma implies that for any convex pair with $a < 0$, we can obtain by mutation
a sequence of exceptional pairs where both the maximal rank as well as the convexity strictly decrease
while $a$ remains constant,
until either we arrive at a concave pair or one of the exceptional objects acquires ranks zero.
\end{sub}

\begin{corollary}\label{algorithm}
Let $\sh{E}_1, \dots, \sh{E}_n$ be an exceptional sequence, then we can produce by mutation an
exceptional sequence  $\sh{Z}_1, \dots, \sh{Z}_t$, $\sh{F}_1, \dots, \sh{F}_{n - t}$ such that
the $\sh{Z}_i$ have rank zero and for any $1 \leq i \leq n - t$, if $\sh{F}_i, \sh{F}_{i + 1}$ 
is not a flat or concave exceptional pair then $\tilde{A}_i^2 \geq 0$, where $\tilde{A}_i$ is the corresponding
element of the contracted toric system.
\end{corollary}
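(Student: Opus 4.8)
The plan is to give an explicit mutation algorithm together with a termination argument, and then to read the assertion off the contraction formula. First I would collect all rank zero objects at the front by Remark \ref{zerojumpremark}, obtaining a sequence $\sh{Z}_1, \dots, \sh{Z}_t, \sh{F}_1, \dots, \sh{F}_{n - t}$; since the contraction is unchanged by moving rank zero objects around, every $\sh{Z}_k$ is then attached to the wrap-around junction, so $\phi \equiv n - t$ and
\[
\tilde{A}_i = A_i \ \ (i < n - t), \qquad \tilde{A}_{n - t} = A_{n - t} + \sum_{k = 1}^t E_k, \qquad \tilde{A}_{n - t}^2 = A_{n - t}^2 + t,
\]
the last equality using $E_k^2 = -1$, $E_k \cdot E_l = 0$ for $k \neq l$ and $E_k \cdot A_{n - t} = 1$ from Definition \ref{toricsystemdef}. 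Writing $a_i = c_1(\sh{F}_i, \sh{F}_{i + 1})^2 = f_i^2 f_{i + 1}^2 A_i^2$ and recalling from \ref{circumferenceformulas} that the convexity satisfies $a_i + f_i^2 + f_{i + 1}^2 = f_i f_{i + 1}\, \chi(\sh{F}_i, \sh{F}_{i + 1})$, the claim at an internal junction $i < n - t$ reduces to the implication ``not concave $\Rightarrow a_i > 0$'', while at the wrap-around junction I have the extra summand $t$ at my disposal.

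The engine is Lemma \ref{concavemutation}. I would repeatedly locate a convex pair $\sh{F}_i, \sh{F}_{i + 1}$ with $a_i < 0$ and reduce it: normalizing $f_i^2 \leq f_{i + 1}^2$, that lemma shows the left mutation $L_i$ replaces $\sh{F}_{i + 1}$ by an object of rank $\rk L_{\sh{F}_i}\sh{F}_{i + 1}$ with $(\rk L_{\sh{F}_i}\sh{F}_{i + 1})^2 < f_{i + 1}^2$, and by the paragraph following the lemma, iterating drives the convexity and the maximal rank of the pair down until the pair becomes concave or one member acquires rank zero. For termination I would attach to the sequence the multiset $\{f_i^2\}$ of squared ranks, sorted in decreasing order: each such reduction replaces one entry by a strictly smaller one and hence strictly decreases this multiset in the (well-ordered) lexicographic order; whenever a rank becomes zero I sweep the new object to the front by Remark \ref{zerojumpremark}, decreasing the number of nonzero objects and leaving the contraction of each internal junction intact. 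When this phase halts, no convex pair with $a_i < 0$ remains, so every internal non-concave junction either has $a_i > 0$, giving $\tilde{A}_i^2 = A_i^2 > 0$ as required, or is a pair with $a_i \leq 0$ that is flat or convex with $a_i = 0$.

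The hard part is this residue. A flat pair ($a_i + f_i^2 + f_{i + 1}^2 = 0$) and the antipodal case $a_i = 0$ of Lemma \ref{opposite1} are both rank preserving under mutation of the pair itself, so Lemma \ref{concavemutation} cannot touch them, yet each is non-concave with $\tilde{A}_i^2 = A_i^2 \leq 0$ at an internal junction. My plan is to eliminate such a pair not by mutating it but by mutating a neighbour: a left mutation of $\sh{F}_{i - 1}, \sh{F}_i$ makes $\sh{F}_i$ and $\sh{F}_{i + 1}$ non-adjacent, so the defect ceases to be a consecutive junction, and the stability of flatness quoted after Lemma \ref{concavemutation} only forbids removing it by mutating the pair in place. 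The difficulty, and the step I expect to require the most care, is to interleave these reshufflings with the reduction above so that the rank multiset still does not increase and the process terminates, while arranging that every surviving internal non-concave junction has $a_i > 0$; any genuinely unavoidable degenerate junction is then rotated into the wrap-around position, where the summand $t$ from the collected rank zero objects forces $\tilde{A}_{n - t}^2 = A_{n - t}^2 + t > 0$. Making the bookkeeping between breaking up degenerate junctions, global termination, and the placement of the rank zero objects close up consistently is the main obstacle.
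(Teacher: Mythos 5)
Up to the point where you stop, your proposal coincides with the paper's proof. The paper's algorithm is exactly your first phase: repeatedly choose a convex pair $\sh{E}_i, \sh{E}_j$ of nonzero ranks (consecutive up to rank zero objects) with $c_1(\sh{E}_i, \sh{E}_j)^2 < 0$, move the intervening rank zero objects aside via \ref{zerojumpremark}, and apply Lemma \ref{concavemutation} until the pair becomes concave or some rank hits zero; termination is justified by the same rank-minimization idea that you formalize with your decreasing multiset (the paper spends one sentence on it), and at the end all rank zero objects are swept to one side. Whether one parks the rank zero objects at the front once and for all (you) or clears them on demand (the paper) is immaterial, again by \ref{zerojumpremark}.

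The residue you flag is a genuine gap --- but it is a gap in the paper itself, not something you failed to extract from it: the paper's proof ends exactly where your first phase ends and simply asserts the corollary. Moreover, your second phase cannot be carried out, because the statement as literally written is false. On $X = \mathbb{P}^1 \times \mathbb{P}^1$ the intersection form on $\Ch^1(X)$ is even, so no class has square $-1$, and by \ref{eulerchar1} (\ref{eulerchar1i}) there are no rank zero numerically exceptional objects at all; hence $t = 0$ for every sequence and your wrap-around bonus $+t$ vanishes identically. The sequence $\sh{O}, \sh{O}(P), \sh{O}(P+Q), \sh{O}(2P+Q)$ (toric system $P, Q, P, Q$) contains no convex pair with $a_i < 0$, so the algorithm returns it unchanged, yet all four junctions are convex with $\tilde{A}_i^2 = 0$; the sequence $\sh{O}, \sh{O}(P), \sh{O}(Q), \sh{O}(P+Q)$ (toric system $P,\, Q-P,\, P,\, P+Q$) is likewise returned unchanged, and its second junction is flat, hence non-concave, with $\tilde{A}_2^2 = -2$; and flatness of a pair survives every mutation of that pair, as the paper notes just before the corollary. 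In fact, combining the $n = 4$ identities $c_1(\sh{E}_i, \sh{E}_{i+1})^2 = -c_1(\sh{E}_{i+2}, \sh{E}_{i+3})^2$ of \ref{nequals4} with the Pl\"ucker relation among the $\det(l_i, l_j)$, one checks that no exceptional sequence on $\mathbb{P}^1 \times \mathbb{P}^1$ whatsoever satisfies the stated conclusion, so no amount of further mutation or bookkeeping can help. The correct repair is to weaken the conclusion to exactly what your first phase proves: after the mutations, $\tilde{A}_i^2 < 0$ occurs only at pairs which are concave \emph{or flat}. This weaker form is all that the proof of Proposition \ref{winding} actually uses, and flat pairs are harmless there, since collinear circumference segments still confine the corresponding vectors $k_j$ to a closed half-space, which is the only property the winding argument needs.
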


\begin{proof}
We iterate the following procedure.
\begin{enumerate}[1)]
\item Choose any pair $\sh{E}_i, \sh{E}_j$ with $1 \leq i < j \leq n + 1$ (recall that we are working
with the cyclically extended sequence) and $e_i, e_j \neq 0$ such that $e_k = 0$
for all $i < k  <j$. If $\sh{E}_i, \sh{E}_j$ is a convex pair and $c_1(\sh{E}_i, \sh{E}_{i + 1})^2 < 0$
then continue with steps 2) and 3).
\item Move all $\sh{E}_k$ for $i < k < j$, to the position left of $\sh{E}_i$ via right mutation.
We denote $R_{\sh{E}_{j - 1}} \cdots R_{\sh{E}_{i + 1}} \sh{E}_i =: \sh{E}_i'$. The resulting pair
$\sh{E}_i', \sh{E}_j$ is still convex and $(\rk \sh{E}_i')^2 = e_i^2$.
\item We iterate mutations as in \ref{convexdecrease} and successively minimize the maximal
rank of the resulting mutated pairs until either the pair becomes concave or a mutation results
in an object of rank zero.
\end{enumerate}
We repeat these steps until the resulting sequence does not contain any convex pair $\sh{E}_i,
\sh{E}_j$ with $c_1(\sh{E}_i$, $\sh{E}_j)^2$ $< 0$. As we minimize the absolute value of ranks
it is guaranteed that this iteration will terminate.
We then finalize this procedure by moving all rank zero objects by right mutations to the leftmost range.
\end{proof}

\section{The global picture}\label{globalsection}

So far, we have established that the Gale transforms of a toric system at least locally represent
the data of a complete toric surface. That is, by Corollary \ref{localfan} and Proposition
\ref{primitivity}, every triple $l_{i - 1}, l_i, l_{i + 1}$ generates
a fan with two maximal cones which intersect in a facet such that $l_{i - 1}, l_i, l_{i + 1}$
are the primitive vectors which generate the $1$-dimensional cones. In this section we want to
show that all the $l_i$ indeed form the set of primitive vectors which generate the fan of a
complete toric surface. For this, we can start with the two cones $\sigma_1, \sigma_2$, generated by,
say, $l_1, l_2, l_3$.
Then clearly, we can try to add a third cone $\sigma_3$ generated by $l_3$ and $l_4$. By construction,
this cone
lies in counterclockwise direction from the first two cones, and we know that $\sigma_2$ and $\sigma_3$
again form a fan with two maximal cones. However, so far we do not have any information on whether
$\sigma_1, \sigma_2, \sigma_3$ fit together to form a proper fan. For this, we would have to prove
that either $\sigma_1 \cap \sigma_3 = \{0\}$ or $\sigma_1 \cap \sigma_3 = \Q_{\geq 0} l_1$ (which
implies $l_1 = l_4$). Similarly, if we successively
add $\sigma_i = \langle l_i, l_{i + 1} \rangle_{\Q_{\geq 0}}$ in counterclockwise fashion, we have to
show that $\sigma_i$ obeys the correct intersection properties with the previously added cones.

The only possibility that this construction can violate these intersection properties is that
for some $3 \leq i < n$, the intersection $\sigma_1 \cap \sigma_i$ is a two-dimensional cone by itself.
Then $\sigma_1, \dots, \sigma_i$ cover all of $N_\Q$ for some $i < n$ without closing up to a
proper fan. Continuing this way, we would end up with a sequence of cones which in counter-clockwise
order cover $N_\Q$ several times by ``winding'' around the origin until finally $\sigma_n$ and
$\sigma_1$ close up these windings via the triple $l_n, l_1, l_2$. We are going to show that there
indeed can only be one winding.

\begin{sub}\label{zeroobjectwinding}
Recall that in Section \ref{localconstellations} we used the simplified assumption that $e_i \neq 0$
for all $i$. We cannot make this simplification in this section, but instead we will make use of
the fact that the number of windings does not depend on the existence of objects of rank
zero among the $\sh{E}_i$. Indeed, as already remarked in \ref{zerojumpremark}, if $e_i = 0$ for
some $i$ then we can use mutation to produce a
sequence which only contains objects of nonzero rank. By Lemmas
\ref{zeromutationconfiguration} and \ref{peri2}, this is a completely local operation which cannot
change the number of windings. This is also true for the converse process, where rank zero objects
are created by mutation. Our strategy will be to use Corollary \ref{algorithm}
in order to create a sequence of the form $\sh{Z}_1, \dots, \sh{Z}_t, \sh{E}_1, \dots, \sh{E}_{n - t}$
to show that the corresponding $\tilde{l}_1, \dots, \tilde{l}_{n - t}$ comprise only one winding.
\end{sub}

\begin{sub}\label{p2example}
We consider first the case $n - t = 3$, i.e. an exceptional sequence
$\sh{Z}_1, \dots, \sh{Z}_{n - 3}, \sh{E}_1, \sh{E}_2, \sh{E}_3$ with $z_i = 0$ and $e_j \neq 0$.
Without loss of generality we can assume that $e_i > 0$ for all $i$.
Then with the notation of the previous
section and Section \ref{galedualsection}, we have one single
relation $e_3^2 \tilde{l}_1 + e_1^2 \tilde{l}_2 + e_2^2 \tilde{l}_3 = 0$. The problem of additional
windings does not occur in this case and we obtain the fan of a weighted projective
space $\mathbb{P}(e_1^2, e_2^2, e_3^2)$ and we would like to determine the possible values for the
$e_i$.

The $E_i = c_1(\sh{Z}_i)$ form the
basis of a maximal negative definite subspace of $\chnum^1(X)$. As we have seen in Section
\ref{rankzerosection}, possibly after twisting the sequence with an appropriate line bundle
we can assume that $E_i \cdot c_1(\sh{E}_j) = 0$ for all $i, j$. Moreover, because the exceptional
sequence generates $\knum(X)$, the $E_i$ together with the $c_1(\sh{E}_j)$ form a generating set of
$\chnum^1(X)$. Now, as $E_1, \dots, E_{n - 3}$ together with $A_1, A_2, A_3$ generate $A \supseteq
\chnum^1(X)$,
so do $E_1, \dots, E_{n - 3}$ together with $\tilde{A}_1, \tilde{A}_2, \tilde{A}_3$. Moreover, the
$\Q$-span of $\tilde{A}_1, \tilde{A}_2, \tilde{A}_3$ is isomorphic to $\Q$ and contained in the orthogonal
complement of the $E_i$. If we denote $H  \in \langle \tilde{A}_1, \tilde{A}_2, \tilde{A}_3 \rangle_\Q
\cap \chnum^1(X)$ a minimal integral element, then by the unimodularity of the intersection pairing,
$E_1, \dots, E_n$ and $H$ necessarily form an
orthogonal basis of $\chnum^1(X)$, in particular we have $H^2 = 1$ and $\tilde{A}_i = \alpha_i H$ with
$\alpha_1, \alpha_2, \alpha_3 \in \Q$. Now we denote $J := -K_X
+ \sum_{i = 1}^{n - 3} (1 + 2 \delta_i) E_i = \tilde{A}_1 + \tilde{A}_2 + \tilde{A}_3 = \gamma H$
for some $\gamma \in \Z$. Then from
$$
\tilde{A}_{i - 1} \cdot \tilde{A}_i = \frac{1}{e_i^2}
$$
for $i = 1, 2, 3$ we compute
\begin{equation*}
\alpha_i^2 = \frac{e_{i + 2}^2}{e_i^2 e_{i + 1}^2}.
\end{equation*}
If $\alpha_i = -e_{i + 2}/(e_i e_{i + 1})$ for one $i$, it follows that $\alpha_i = -e_{i + 2} / (e_i e_{i + 1})$ for all $i$, hence,
after possibly exchanging $H$ with $-H$, we can assume without loss of generality that $\alpha_i = e_{i + 2}/(e_i e_{i + 1})$
for $i = 1, 2, 3$.
Now with the identities \ref{eulersym} (\ref{eulersymi}) \& (\ref{eulersymii}):
\begin{equation*}
\chi(\sh{E}_1, \sh{E}_2) = \frac{1}{e_1 e_2} (c_1(\sh{E}_1, \sh{E}_2)^2
+ e_1^2 + e_2^2) = \frac{1}{e_1 e_2} (e_3^2 + e_1^2 + e_2^2) =
 -K_X \cdot c_1(\sh{E}_1, \sh{E}_2) = J \cdot c_1(\sh{E}_1 \sh{E}_2) = \gamma e_3.
\end{equation*}
So, the $e_i$ must satisfy the following equation:
\begin{equation*}
e_1^2 + e_2^2 + e_3^2 = \gamma e_1 e_2 e_3.
\end{equation*}
It is well known that this equation admits integer solutions if and only if $\gamma \in \{1, 3\}$
(see e.g. \cite[\S 2.1]{Aigner13}) and $(e_1, e_2, e_3)$ is a solution for the case $\gamma = 1$
iff $(e_1 / 3, e_2 / 3, e_3 / 3)$ is a solution for the case $\gamma = 3$. As necessarily
$\gcd\{e_1, e_2, e_3\} = 1$, the result is that $\gamma = 3$ and the $e_i$ satisfy the {\em Markov
equation}:
$$
e_1^2 + e_2^2 + e_3^2 = 3 e_1 e_2 e_3.
$$
For the case $n = 3$ this reproduces a well-known result of Rudakov \cite{Rudakov89a} for
$\mathbb{P}^2$ from a purely combinatorial perspective. Also, as in the case for $\mathbb{P}^2$
we can use mutations such that after finitely many steps we obtain $e_1^2 = e_2^2 = e_3^2 = 1$.
Above observation is also a combinatorial variant of results of Hacking
\cite{HackingProkhorov10, Hacking13} which
gives a correspondence of the construction of exceptional sequences on $\mathbb{P}^2$ to
$\Q$-Gorenstein degenerations of $\mathbb{P}^2$ whose exceptional fiber is a weighted projective
spaces $\mathbb{P}(e_1^2, e_2^2, e_3^2)$, where the $e_i$ satisfy the Markov equation.
\end{sub}

We are now finally able to deal with the second Chern classes of rank zero objects. The following
result is an important step towards proving Theorem \ref{deltatheorem}.

\begin{proposition}\label{p2nodefects}
As in \ref{p2example}, let $\sh{Z}_1, \dots, \sh{Z}_{n - 3}, \sh{E}_1, \sh{E}_2, \sh{E}_3$ be an
exceptional sequence with $z_i = 0$ and $e_j \neq 0$. Then $\delta_i \in \{0, 1\}$
for every $i$.
\end{proposition}

\begin{proof}
With the notation of \ref{p2example} and our general assumption that $K_X^2 = 12 - n$, we have
\begin{align*}
12 - n = K_X^2 & = (J - \sum_{i = 1}^{n - 3} (1 + 2\delta_i)E_i)^2
= 9 - (n - 3) - 4 \sum_{i = 1}^{n - 3}\delta_i(\delta_i + 1)\\
& = 12 - n - 4 \sum_{i = 1}^{n - 3}\delta_i(\delta_i + 1),
\end{align*}
therefore $\sum_{i = 1}^{n - 3}\delta_i (\delta_i + 1) = 0$ which implies $\delta_i \in \{0, -1\}$
for every $i$.
\end{proof}

\begin{sub}\label{nequals4}
Another special case which we have to settle is $n - t = 4$. This case exhibits a nice symmetry with
$a_{i + 1} = \det(\tilde{l}_{i + 2}, \tilde{l}_i) = -\det(\tilde{l}_i, \tilde{l}_{i + 2}) = -a_{i - 1}$ for
every $i$.

If no opposing pair, i.e. an $\tilde{l}_i$ such that $\tilde{l}_{i + 2} = -\tilde{l}_i$ exists
then it is elementary to see that the $\tilde{l}_i$
can produce only one winding and for some $i$ we have $a_{i - 2}, a_{i - 1} > 0$
and $a_i, a_{i + 1} < 0$.
If we move the rank zero objects such that our exceptional sequence is of the form
$\sh{Z}_1, \dots, \sh{Z}_{n - 4}, \sh{E}_i, \sh{E}_{i + 1}, \sh{E}_{i + 2}, \sh{E}_{i + 3}$, then
the two pairs $\sh{E}_i, \sh{E}_{i + 1}$ and $\sh{E}_{i + 1}, \sh{E}_{i + 2}$ cannot be
both concave, because otherwise the $\Q_{\geq 0}$-span of the
$\tilde{l}_i$ could not generate $N_\Q$. This means that
whenever there is no opposing pair $\tilde{l}_{i + 2} = -\tilde{l}_i$, we can use Lemma
\ref{concavemutation} in order to decrease the maximal rank of a concave pair.
By iteration we will eventually end up in one of two
cases:
\begin{enumerate}[1)]
\item We produce a pair $\tilde{l}_{i + 2} = -\tilde{l}_i$
\item We produce a sequence of the form $\sh{Z}_1, \dots, \sh{Z}_t, \sh{Z}, \sh{F}_1, \sh{F}_2, \sh{F}_3$ with $z = 0$ and
$f_i^2 > 0$ for $i = 1, 2, 3$. Then it follows from \ref{p2example} that the $f_i$ must satisfy
the Markov equation $f_1^2 + f_2^2 + f_3^2 = 3 f_1 f_2 f_3$.
\end{enumerate}
\end{sub}

\begin{proposition}\label{hirzebruchnodefects}
Let $\sh{Z}_1, \dots, \sh{Z}_{n - 4}, \sh{E}_1, \sh{E}_2, \sh{E}_3, \sh{E}_4$ be an exceptional
sequence with $z_i = 0$ and $e_j \neq 0$ and assume that $\tilde{l}_j = -\tilde{l}_{j + 2}$ for some
$1 \leq j \leq 4$. Then $\delta_i \in \{0, -1\}$ for every $i$ and $e_j^2 = 1$ for every $j$.
\end{proposition}

\begin{proof}
If $\tilde{l}_j = -\tilde{l}_{j + 2}$, then $a_{j- 1} = a_{j + 1} = \det(\tilde{l}_j, \tilde{l}_{j + 2}) = 0$. So, by
Lemma \ref{opposite1} we get that $e_{j - 1}^2 = e_j^2$ and $e_{j + 1}^2 = e_{j + 2}^2$.
As in \ref{p2example}, we have an orthogonal decomposition
$\chnum^1(X)_\Q \supset A = \langle E_1, \dots, E_{n - 4}\rangle_\Z + \langle \tilde{A}_1, \dots
\tilde{A}_4\rangle_\Z$ with $\sum_{k = 1}^4 \tilde{A}_j = J = -K_X - \sum_{i = 1}^{n - 4}
(1 + 2 \delta_i)E_i$ and get:
\begin{align*}
K_X^2 = 12 - n = J^2 - (n - 4) - 4 \sum_{i = 1}^{n - 4} \delta_i (\delta_i + 1).
\end{align*}
Moreover, with $a_{j - 1} = a_{j + 1} = 0$, $a_j = -a_{j + 2}$, $e_{j - 1} = e_{j + 1}$, and $e_j = e_{j + 2}$, we
get $\tilde{A}_{j - 1}^2 = \tilde{A}_{j + 1}^2 = 0$, $\tilde{A}_j^2 = -\tilde{A}_{j + 2}^2$, and:
\begin{align*}
J^2 = \left(\sum_{k = 1}^4 \tilde{A}_k\right)^2 =
 4(\frac{1}{e_j^2} + \frac{1}{e_{j + 1}^2}).
\end{align*}
Substituting this into the previous formula, we get:
$$
\sum_{i = 1}^{n - 4} \delta_i (\delta_i + 1) = \frac{1}{e_j^2} + \frac{1}{e_{j + 1}^2} - 2,
$$
where the right hand side is integral iff $e_j^2 = e_{j + 1}^2 = 1$. So,
$$
\sum_{i = 1}^{n - 4}\delta_i (\delta_i + 1) = 0,
$$
hence $\delta_i \in \{0, -1\}$ for every $i$.
\end{proof}

By Proposition \ref{hirzebruchnodefects}, the $\tilde{l}_i$ generate the fan
of a Hirzebruch surface and we recover a special case of the corresponding result for exceptional
sequences of line bundles as was proven in \cite[Theorem 3.5]{HillePerling11}
We also have reproduced a result of Nogin \cite[\S 3]{Nogin91} which states that every exceptional
sequence on a Hirzebruch surface can be mutated to a sequence consisting of objects of rank one
(see also Example \ref{hirzebruchexample}).

The following Lemma helps to separate the cases $n - t \leq 4$ and $n - t > 4$.

\begin{lemma}\label{positiveselfintersection}
Let $n - t \geq 4$.
\begin{enumerate}[(i)]
\item\label{positiveselfintersectioni}
Assume that $\tilde{A}_i^2 \geq 0$ for some $i$, then there exists at most one other $\tilde{A}_j$
such that $\tilde{A}_j^2 > 0$. If so, then $j$ is either $i - 1$ or $i + 1$.
\item\label{positiveselfintersectionii}
If $i \neq j$, $\tilde{A}_i^2 = 0$, $\tilde{A}_j^2 = 0$, and $\tilde{A}_i \cdot \tilde{A}_j = 0$ then $j = i + 2$
and $n - t = 4$.
\item\label{positiveselfintersectioniii}
If $\tilde{A}_i^2 = \tilde{A}_{i + 1}^2 = 0$ for some $i$, then either $n - t = 4$ and
$\tilde{A}_j^2 = 0$ for all $j$, or $n - t > 4$ and $\tilde{A}_j^2 < 0$ for all $j \neq i, i + 1$.
\end{enumerate}
\end{lemma}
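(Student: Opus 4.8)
The plan is to work entirely with the quadratic data encoded in the toric system, exploiting the relations established in paragraphs \ref{nonzerotoricsystem} and the convexity/self-intersection interpretation from Section \ref{localconstellations}. Recall that we have a (cyclic) sequence $A_1, \dots, A_n$ of $\Q$-divisor classes in $\Ch^1(X)_\Q$ satisfying $A_i \cdot A_{i+1} = 1/r_{i+1}^2 > 0$, $A_i \cdot A_j = 0$ for non-adjacent $i, j$, and $\sum_i A_i = -K_X$. These span a subspace $A$ of rank $n-2$ inside $\Ch^1(X)_\Q$, whose intersection form has signature $(1, n-3)$ by the Hodge index theorem (since $\Ch^1(X)$ has signature $(1, n-3)$ and the $E_i$, being orthogonal $(-1)$-classes, are separated off). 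The fundamental tension I would exploit is that positivity of self-intersections is heavily constrained in a form of signature $(1, \ast)$: the positive-definite directions are at most one-dimensional.

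**Step (i): at most two positive self-intersections, and they are adjacent.**

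First I would prove (\ref{positiveselfintersectioni}). Suppose $A_i^2 \geq 0$. I would first argue that if $A_j^2 > 0$ for some $j$ not adjacent to $i$ (so $A_i \cdot A_j = 0$), then $A_i, A_j$ span a subspace on which the form is positive semi-definite of rank $2$ (genuinely positive-definite if $A_i^2 > 0$, and at least carrying one positive direction if $A_i^2 = 0$ since then $A_i^2 A_j^2 - (A_i\cdot A_j)^2 = 0$ but $A_j^2>0$ supplies a positive vector while keeping $A_i$ isotropic and orthogonal to it). In either configuration one produces two linearly independent classes spanning a subspace with no negative direction, contradicting that the ambient form has only a one-dimensional positive cone. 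Hence any second positive $A_j$ must satisfy $A_i \cdot A_j \neq 0$, forcing $j \in \{i-1, i+1\}$ by the orthogonality relation (\ref{toricsystemdef3iii}). To finish, I would rule out having both $A_{i-1}^2 > 0$ and $A_{i+1}^2 > 0$ simultaneously: $A_{i-1}$ and $A_{i+1}$ are non-adjacent (as $n \geq 4$ means $i-1$ and $i+1$ differ by $2 < n-1$), hence orthogonal, and each has positive square, which again yields a two-dimensional positive subspace — contradiction. This gives the claimed uniqueness.

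**Steps (ii) and (iii): the degenerate boundary cases.**

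For (\ref{positiveselfintersectionii}), assume $A_i^2 = A_j^2 = 0$ with $A_i \cdot A_j = 0$; these are two orthogonal isotropic vectors. Since the form has signature $(1, n-3)$, the maximal isotropic subspaces have dimension $\min(1, n-3)$; two orthogonal independent isotropic vectors can coexist only if the form is degenerate on their span in a way the signature forbids unless $n - 3 = 1$, i.e. $n = 4$. I would make this precise by noting that an orthogonal pair of isotropic vectors spans a totally isotropic plane, which is impossible in signature $(1, n-3)$ once $n > 4$; and when $n = 4$ the indices force $j = i+1$ cyclically (the only non-adjacent relation collapses). For (\ref{positiveselfintersectioniii}), assuming $A_i^2 = A_{i+1}^2 = 0$ for adjacent indices, I would use the defining relation $A_i \cdot A_{i+1} = 1/r_{i+1}^2 \neq 0$ together with $\sum_k A_k = -K_X$ and the known value $K_X^2$; intersecting $\sum_k A_k$ with $A_i$ and with $A_{i+1}$ isolates the neighboring contributions and pins down the sign pattern. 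The dichotomy ($n=4$ with all $A_j^2 = 0$, versus $n > 4$ forcing $A_j^2 < 0$ for all remaining $j$) then follows by applying part (\ref{positiveselfintersectioni}) to conclude no further non-negative square can appear among the $j \neq i, i+1$, combined with the signature bound ruling out a third isotropic direction.

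**Anticipated main obstacle.**

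The genuinely delicate point will be the borderline isotropic cases in (ii) and (iii), where self-intersections are exactly zero rather than strictly positive; here the clean "positive-definite subspace" contradiction degenerates and I must argue instead with the structure of totally isotropic subspaces under a signature $(1, n-3)$ form, being careful about whether $A_i$ and $A_{i+1}$ remain linearly independent and how the global relation $\sum A_k = -K_X$ interacts with $K_X^2$. The strictly-positive case in (i) is essentially immediate from signature, so I expect the subtlety to be entirely concentrated in tracking the isotropic configurations and in correctly extracting the cyclic index constraints (e.g. that $n=4$ is genuinely forced) from the adjacency structure.
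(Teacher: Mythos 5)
Your part (i) is correct and is essentially the paper's own argument: the paper realizes your ``no two-dimensional positive semi-definite subspace in signature $(1,n-3)$'' principle concretely, by writing $A_j = \alpha_j A_i + n_j$ with respect to the Hodge decomposition $C^+ \perp C^-$ and computing $A_j^2 = -\alpha_j^2 A_i^2 + n_j^2 \leq 0$ for every $j$ not adjacent to $i$.

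The genuine gap is in (ii), and it propagates to (iii). You assume the two isotropic classes $A_i, A_j$ are linearly independent and argue that the resulting totally isotropic plane is ``impossible in signature $(1,n-3)$ once $n>4$'', leaving $n=4$ as the allowed case. But by your own bound the maximal totally isotropic subspace has dimension $\min(1,n-3)=1$ for \emph{every} $n \geq 4$: a totally isotropic plane is just as impossible in signature $(1,1)$ as in signature $(1,n-3)$ for $n>4$. So your argument, carried out literally, proves that configuration (ii) never occurs at all --- contradicting the lemma and concrete examples: for the toric system $(-s-b)P+Q,\ P,\ (s-b)P+Q,\ P$ on a Hirzebruch surface (Example \ref{hirzebruchexample}) one has $A_2 = A_4 = P$ with $P^2 = 0$ and $A_2 \cdot A_4 = 0$. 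What you are missing is that the signature forces \emph{linear dependence}: exactly as in (i), writing $A_j = \alpha A_i + n_j$ with $n_j \in C^-$, the hypotheses give $A_j^2 = n_j^2 = 0$, hence $n_j = 0$ and $A_j = \alpha A_i$ --- this is the paper's route. Only then does the toric-system combinatorics enter: $0 \neq A_j \cdot A_{j\pm 1} = \alpha\, A_i \cdot A_{j\pm 1}$ forces both $j-1$ and $j+1$ to be cyclic neighbours of $i$ (neither can equal $i$, since $A_i^2 = 0$), whence $\{j-1,j+1\} = \{i-1,i+1\}$, so $j = i+2 \equiv i-2$ and $n = 4$. (Incidentally, the distance is $2$, not $1$; the ``$j = i+1$'' in the statement is a typo, and your attempt to force $j=i+1$ should have been a warning sign, since $A_i \cdot A_{i+1} = 1/r_{i+1}^2 \neq 0$ by definition contradicts the hypothesis $A_i \cdot A_j = 0$.) Part (iii) inherits the gap, because excluding a third isotropic class when $n>4$ is precisely an application of (ii). Your appeal to $\sum_k A_k = -K_X$ and $K_X^2$ for the $n=4$ branch is also vaguer than necessary: there the span of the $A_k$ has rank $n-2=2$, so the two independent isotropic classes $A_i, A_{i+1}$ (independent because $A_i \cdot A_{i+1} \neq 0$) form a basis, and the orthogonality relations force $A_{i+2} \in \Q A_i$ and $A_{i+3} \in \Q A_{i+1}$, hence all four squares vanish.
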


\begin{proof}
$\chnum^1(X)_\Q$ is a metric space with respect to the intersection product
which has an orthogonal decomposition $\chnum^1(X)_\Q \simeq C^+ \bot\ C^-$ into strictly positive and
strictly negative definitive part. This induces a decomposition of $\tilde{A}_\Q = \langle \tilde{A}_1,
\dots, \tilde{A}_{n - t} \rangle_\Q = \tilde{A}^+ \bot \tilde{A}^-$ where $\tilde{A}^- = \tilde{A} \cap
C^-$. By the Hodge Index Theorem we have $\dim_\Q \tilde{A}^+ \leq  1$.
Up to cyclic renumbering let us now assume that $\tilde{A}_1^2 \geq 0$. Then clearly $\tilde{A}_1$ and
$\tilde{A}^-$ generate $\tilde{A}_\Q$ as a $\Q$-vector space and for every $i$ we can write
$\tilde{A}_i = \alpha_i \tilde{A}_1 + n_i$ for some $\alpha_i
\in \Q$ and $n_i \in \tilde{A}^-$. Now, for any $2 < i < n$ we have $0 = \tilde{A}_1 \cdot \tilde{A}_i
= \tilde{A}_1 \cdot (\alpha_i A_1 + n_i) = \alpha_i \tilde{A}_1^2 + \tilde{A}_1 n_i$, hence
$\alpha_i \tilde{A}_i^2 = - \tilde{A}_1 n_i$. Now, for $2 < i < n$
we compute $\tilde{A}_i^2 = (\alpha_i \tilde{A}_1 + n_i)^2 = \alpha_i \tilde{A}_1 \cdot
(\alpha_i \tilde{A}_1 + 2 n_i) + n_i^2 =
\alpha_i \tilde{A}_1 \tilde{A}_i + \alpha_i \tilde{A}_1 n_i + n_i^2$
$= \alpha_i \tilde{A}_1 n_i + n_i^2 = -\alpha_i^2 \tilde{A}_i^2 + n_i^2 \leq 0$, as both
$-\alpha_i^2 \tilde{A}_i^2 \leq 0$
and $n_i^2 \leq 0$. If $\tilde{A}_1^2 > 0$ then this inequality is strict for every $i$. This
leaves only $\tilde{A}_2$ and $\tilde{A}_n$ which can have positive self-intersection number. But
because $n \geq 4$, we also have $\tilde{A}_2 \cdot \tilde{A}_n = 0$. So if one of $\tilde{A}_2,
\tilde{A}_n$ has positive self-intersection, then we can argue as for $\tilde{A}_1$ and conclude
that the other must have negative self-intersection and (\ref{positiveselfintersectioni}) follows.

(\ref{positiveselfintersectionii})
The same computation leads to $\tilde{A}_i^2 = n_i^2 = 0$, hence $n_i = 0$ and thus $\tilde{A}_i =
\alpha_i \tilde{A}_1$. Then $\tilde{A}_i \cdot \tilde{A}_2 \neq 0$ and $\tilde{A}_i \cdot A_n \neq 0$,
which is only possible if $n = 4$ and $i = 3$.

(\ref{positiveselfintersectioniii}) We know from (\ref{positiveselfintersectioni}) that there
cannot be any $\tilde{A}_j$ with $\tilde{A}_j^2 > 0$. If there exists a third $\tilde{A}_j$ with
$\tilde{A}_j^2 = 0$, then $\tilde{A}_j \cdot \tilde{A}_i = 0$ or $\tilde{A}_j \cdot \tilde{A}_{i + 1}
= 0$ and by (\ref{positiveselfintersectionii}) this implies $n - t = 4$ and $l_1 = -l_3$, $l_2 = -l_4$
by Lemma \ref{opposite1}. Note that $\det(l_{i - 1}, l_{i + 1}) = 0$ for any $i$ implies $a_i = 0$ and thus
$\tilde{A}_i^2 = 0$.
\end{proof}

\begin{proposition}\label{winding}
Let $\sh{E}_1, \dots, \sh{E}_n$ be a numerically exceptional sequence with contracted toric
system
$\tilde{A}_1, \dots$, $\tilde{A}_{n - t}$. Then the Gale duals $\tilde{l}_1, \dots, \tilde{l}_{n - t}$
generate the fan corresponding to a complete toric surface such that the $\tilde{l}_i$ are primitive
vectors of the rays in this fan and the maximal cones are generated by $\tilde{l}_i, \tilde{l}_{i + 1}$
for $1 \leq i < n - t$ and $\tilde{l}_{n - t}, \tilde{l}_1$.
\end{proposition}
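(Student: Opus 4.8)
The plan is to show that the primitive vectors $\tilde{l}_1, \dots, \tilde{l}_{n-t}$ sweep out exactly one full turn around the origin; combined with Corollary \ref{localfan} and Proposition \ref{primitivity} this is all that remains, since then the cones $\langle \tilde{l}_i, \tilde{l}_{i+1}\rangle$ patch together into the fan of a complete toric surface with the asserted rays and maximal cones. First I would reduce the setup. By \ref{zeroobjectwinding} the number of windings does not see the rank-zero objects, so I may assume $t = 0$ and argue with $A_1, \dots, A_n$; and by Proposition \ref{mutationlocal} a mutation moves only one $l_i$, which by Lemma \ref{peri2} stays in the region cut out by its neighbours, so the number of windings is invariant under mutation. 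Hence I am free to replace $\mathbf{E}$ by any mutation-equivalent sequence.

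The argument then proceeds by contracting rays one at a time until only three or four remain. Set $m := n - t$. The cases $m = 3$ and $m = 4$ are settled in \ref{p2example} (the fan of $\mathbb{P}(e_1^2, e_2^2, e_3^2)$) and \ref{nequals4}, where a single winding is produced. For $m \ge 5$ I would first pass to the normal form of Corollary \ref{algorithm}; if along the way a mutation creates a rank-zero object then $m$ drops and I invoke \ref{zeroobjectwinding}, so I may assume every pair $\sh{E}_i, \sh{E}_{i+1}$ is either concave or satisfies $\tilde{A}_i^2 > 0$. By Lemma \ref{positiveselfintersection} at most two of the $\tilde{A}_i^2$ can be non-negative, so for $m \ge 5$ at least three of the pairs are concave.

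The key point is that the circuit relation $e_i^2 \tilde{l}_{i-1} + a_i \tilde{l}_i + e_{i+1}^2 \tilde{l}_{i+1} = 0$ of Proposition \ref{nonflatcircuit}, evaluated at a concave pair (so $a_i < 0$), writes $\tilde{l}_i$ as a strictly positive combination of $\tilde{l}_{i-1}$ and $\tilde{l}_{i+1}$. Thus $\tilde{l}_i$ lies in the interior of the cone $\langle \tilde{l}_{i-1}, \tilde{l}_{i+1}\rangle$, whose opening angle is less than $\pi$. I would \emph{contract} this ray: delete $\tilde{l}_i$ and fuse the two cones meeting along it into $\langle \tilde{l}_{i-1}, \tilde{l}_{i+1}\rangle$. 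Since $\tilde{l}_i$ sits strictly inside, all remaining consecutive determinants stay positive and the total angle swept is unchanged, so this operation decreases $m$ by one while preserving the number of windings.

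It remains to guarantee that a removable (concave) ray exists at every stage above $m = 4$, and this is where the main work lies: I must check that the contracted family is again the reduced datum of a toric system, i.e. that the new classes retain the chain orthogonality (non-adjacent products zero, the new adjacent product $\bar{A}_{i-1}\cdot\bar{A}_{i+1} \neq 0$) while still lying in the signature-$(1,\ast)$ lattice $\Ch^1(X)_\Q$. Granting this, Lemma \ref{positiveselfintersection} applies unchanged to every intermediate configuration, so a configuration of $m \ge 4$ rays can never be entirely convex and the contraction can always be continued until $m \le 4$. The process therefore terminates either at $m = 3$, where the three swept angles lie in $(0,\pi)$ and their sum $2\pi W$ is below $3\pi$, forcing $W = 1$, or at an opposing configuration with $m = 4$ (two antipodal pairs $\tilde{l}_{i} = -\tilde{l}_{i+2}$), which visibly winds once; as contraction preserves windings, the original sequence has exactly one winding as well. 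The principal obstacle is precisely the verification that ray-contraction, transported through the Gale duality of Section \ref{galedualsection}, preserves the toric-system axioms; the angle bookkeeping and the base cases are then routine or already in hand.
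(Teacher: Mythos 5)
Your route is genuinely different from the paper's, so let me first record the difference. The paper, after mutating into the normal form of Corollary \ref{algorithm}, never deletes rays: it shows that along a maximal run of \emph{concave} pairs the Gale dual vectors are confined to a half plane bounded by the line through the vector preceding the run (here concavity, i.e.\ $k_j \in \operatorname{conv}(0, k_{j-1}, k_{j+1})$, and not merely $B_j^2 < 0$, is what makes the confinement propagate along the run), and since Lemma \ref{positiveselfintersection} leaves at most two adjacent $B_i$ with non-negative square, all maximal cones except two or three lie in a half plane; elementary geometry then excludes a second winding, and \ref{zeroobjectwinding} carries the conclusion back to the original sequence. You instead run a descending induction on the number of rays, deleting a ray that lies in the open cone spanned by its neighbours.

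The gap is the one you flag yourself: ``Granting this, Lemma \ref{positiveselfintersection} applies unchanged to every intermediate configuration.'' This cannot be granted, and it is the heart of the matter. Lemma \ref{positiveselfintersection} is not a statement about lattice vectors but about divisor classes in $\Ch^1(X)_\Q$; its proof is the Hodge index theorem. After one contraction the configuration is in general no longer the Gale dual of a toric system of an exceptional sequence: for instance the new adjacent determinant equals $-a_i = -e_i^2 e_{i+1}^2 \tilde{A}_i^2$, which need not be a perfect square, whereas adjacent determinants of honest toric systems are squares of ranks by Proposition \ref{nonflatcircuit}. So neither Lemma \ref{positiveselfintersection} nor Corollary \ref{algorithm} can be quoted at later stages, and the induction stops after its first step. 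The gap is fillable, along lines the paper itself hints at in the remark following Lemma \ref{positiveselfintersection} (the lemma only needs the positive-definite part of the span of the classes to be one-dimensional): since the contracted ray has $\tilde{A}_i^2 < 0$, orthogonal projection of the remaining classes onto $\tilde{A}_i^{\bot} \subset \Ch^1(X)_\Q$ preserves the cyclic chain pattern --- one computes $\tilde{A}_{i-1}^{\bot} \cdot \tilde{A}_{i+1}^{\bot} = -1/a_i > 0$ while all other non-adjacent products stay zero --- and $\tilde{A}_i^{\bot}$ again has one-dimensional positive part; one must then verify (say via Lemma \ref{triplelemma}) that the Gale dual of the projected classes is exactly the contracted configuration. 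With that verification your induction closes, and it is in one respect leaner than the paper's argument: contraction needs only $\tilde{A}_i^2 < 0$, so the concavity normal form of Corollary \ref{algorithm} becomes unnecessary, whereas the paper's half-plane argument genuinely needs concavity. As written, however, the central step of your proof is missing.
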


\begin{proof}
By Corollary \ref{algorithm}, we can produce by mutation an exceptional sequence $\sh{Z}_1, \dots,
\sh{Z}_s$, $\sh{F}_1, \dots, \sh{F}_{n - s}$ with $s \geq t$ and contracted toric system
$B_1, \dots, B_{n - s}$, such that any exceptional pair $\sh{F}_i,
\sh{F}_{i + 1}$ (where we read the indices cyclically modulo $n - s$) with
$B_i^2 < 0$ is concave or flat. We denote $k_1, \dots, k_{n - s}$
the Gale duals of the $B_i$.
The cases $n - s \leq 4$ have been covered in \ref{p2example} and \ref{nequals4}, so we assume
$n - s \geq 5$. It follows from Lemma \ref{positiveselfintersection} that there exist at most two
$B_i$ with $B_i^2 > 0$ which then must be adjacent.

Now consider any subsequent $B_j, B_{j + 1}, \dots, B_{j + r}$ such that $B_{j + j'}^2 < 0$ for
all $0 \leq j' \leq r$. Then the sequence of circumference segments $p_j, \dots, p_{j + r}$ is
non-convex with respect to the origin, i.e. for any $0 \leq j' < r$, the vector $k_{j + j'}$ is
contained in the convex hull of $0, k_{j + j' - 1}, k_{j + j' + 1}$. But this implies that
all the $k_{j + j'}$ for $-1 \leq j' \leq r$ are contained in the same half space whose boundary
is given by $\Q k_{j - 1}$. In particular, $k_{j - 1}$ is the only one which is contained in the
boundary.

With this observation it follows that there must be at least one $B_i$ with $B_i^2 > 0$, say $B_1$,
and $B_j, \dots, B_n$ is the maximal sequence with $B_i^2 < 0$, where $j = 2$ or $j = 3$. This implies
that all cones except for possibly two (if $j = 2$) or three (if $j = 3$) are contained in a
half space and it follows from elementary geometric arguments that we cannot produce more than
one winding this way.

Now, as we have argued \ref{zeroobjectwinding}, we can conclude that also the original contracted
toric system does not produce more than one winding and the assertion follows.
\end{proof}

\begin{corollary}\label{rankonezero}
Let $\sh{E}_1, \dots, \sh{E}_n$ and $\sh{Z}_1, \dots, \sh{Z}_t$, $\sh{F}_1, \dots, \sh{F}_{n - t}$
be numerically exceptional sequences as in Corollary \ref{algorithm}. Then either $t = n - 3$ and
$f_1, f_2, f_3$
satisfy the Markov equation of \ref{p2example} or $t = n - 4$ and the $\sh{F}_i$ are objects of ranks
$\pm 1$. In particular, any exceptional sequence can by mutation be transformed into an exceptional
sequence consisting only of objects of ranks $\pm 1$ and $0$.
\end{corollary}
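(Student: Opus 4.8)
The plan is to reduce everything to controlling the length $m := n - t$ of the rank-nonzero part of the sequence produced by Corollary \ref{algorithm}. Writing $\sh{Z}_1, \dots, \sh{Z}_t, \sh{F}_1, \dots, \sh{F}_m$ for that sequence and $\tilde{A}_1, \dots, \tilde{A}_m$ for its reduced toric system, Proposition \ref{winding} tells us that the Gale duals $\tilde{l}_1, \dots, \tilde{l}_m$ are the primitive generators of the rays of a complete toric surface, with two-dimensional cones $\langle \tilde{l}_i, \tilde{l}_{i + 1}\rangle$ in cyclic order. Since $t \leq n - 3$ by Lemma \ref{rankzerosequences}, we have $m \geq 3$. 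I claim that in fact $m \in \{3, 4\}$, and that these two values correspond exactly to the two alternatives in the statement: the case $m = 3$ is the weighted-projective-space situation of \ref{p2example}, which forces the Markov equation on $(f_1, f_2, f_3)$, and the case $m = 4$ is analysed in \ref{nequals4}.

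The heart of the argument is the bound $m \leq 4$, which I would obtain by a convexity count on the \emph{circumference} polygon with vertices $\tilde{l}_1, \dots, \tilde{l}_m$. Because consecutive rays span positively oriented cones (Corollary \ref{localfan}) and wind around the origin exactly once (Proposition \ref{winding}), this polygon is simple, contains the origin, and has total turning $2\pi$; a vertex contributes positive turning precisely when the corresponding pair $\sh{F}_i, \sh{F}_{i + 1}$ is convex and negative turning when it is concave. Now by Corollary \ref{algorithm} every pair is either concave or has $\tilde{A}_i^2 > 0$ --- a convex pair with $\tilde{A}_i^2 < 0$ would have been removed by the rank-reducing step of Lemma \ref{concavemutation}. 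By Lemma \ref{positiveselfintersection}(\ref{positiveselfintersectioni}) at most two of the $\tilde{A}_i$ satisfy $\tilde{A}_i^2 > 0$, and parts (\ref{positiveselfintersectionii}), (\ref{positiveselfintersectioniii}) confine the vanishing case $\tilde{A}_i^2 = 0$ to $m = 4$. Hence for $m \geq 5$ there are at most two convex vertices and at least $m - 2 \geq 3$ concave ones; as each convex turning is strictly less than $\pi$, the total turning is strictly less than $2\pi$, contradicting that it equals $2\pi$. Therefore $m \leq 4$.

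It then remains to read off the two cases. For $m = 3$ we are in \ref{p2example}: $\langle \tilde{A}_1, \tilde{A}_2, \tilde{A}_3\rangle \cong \Q$, the fan is that of $\mathbb{P}(f_1^2, f_2^2, f_3^2)$, and $(f_1, f_2, f_3)$ solves the Markov equation; this is the alternative $t = n - 3$. For $m = 4$, the analysis of \ref{nequals4} shows that iterated concave mutation either produces a pair of opposite rays $\tilde{l}_i = -\tilde{l}_{i + 2}$, whence $f_i^2 = 1$ for all $i$ by Lemma \ref{opposite1} (the alternative $t = n - 4$), or creates a rank-zero object and drops us into the $m = 3$ Markov case. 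For the concluding assertion I would note that both alternatives yield a sequence of rank $\pm 1$ objects after one further round of mutation: in the $m = 4$ branch this is automatic, and in the $m = 3$ branch the Markov triple $(f_1, f_2, f_3)$ descends to $(1, 1, 1)$ by the classical Markov reduction --- each non-minimal triple admits a mutation strictly decreasing the maximal rank via the formulas of \ref{mutlemma}, exactly as carried out at the end of \ref{nequals4}. Together with the rank-zero $\sh{Z}_i$, the resulting sequence consists only of objects of ranks $\pm 1$ and $0$.

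The step I expect to be most delicate is the convexity count giving $m \leq 4$. The subtlety is that the sign of $\tilde{A}_i^2$ (equivalently of $a_i = c_1(\sh{F}_i, \sh{F}_{i + 1})^2$) is \emph{not} the same as the convexity $a_i + f_i^2 + f_{i + 1}^2$ of the pair, so one must argue that after Corollary \ref{algorithm} the only vertices of nonnegative turning are the at most two controlled by Lemma \ref{positiveselfintersection}, carefully using parts (\ref{positiveselfintersectionii}) and (\ref{positiveselfintersectioniii}) to eliminate stray flat vertices when $m \geq 5$. An equivalent and perhaps cleaner formulation, closer to the proof of Proposition \ref{winding}, replaces the turning count by the observation that a run of $\geq m - 2$ consecutive concave pairs keeps all the corresponding rays inside a single half-plane, so that two convex corners cannot close the fan up around the origin.
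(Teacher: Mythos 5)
Your proposal is correct and follows essentially the same route as the paper: the paper's (very terse) proof likewise combines the termination property of Corollary \ref{algorithm} with Lemma \ref{positiveselfintersection} and the winding argument behind Proposition \ref{winding} to force $n - t \leq 4$, and then settles the two cases via \ref{p2example} and the last part of \ref{nequals4}. Your turning-angle count is just a repackaging of the paper's ``elementary observation'' that for $n - t > 4$ the sequence would have to contain a convex pair with $c_1(\sh{E}_i, \sh{E}_{i+1})^2 < 0$, which the output of the algorithm forbids --- an equivalence you yourself point out in your closing half-plane reformulation.
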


\begin{proof}
The assertions follow from Proposition \ref{winding} and the elementary observation that
a fan of a complete toric surface with at least $5$ rays always contains a configuration of
adjacent primitive vectors, $k_1, k_2, k_3$, say, with $\det(k_3, k_1) < 0$ and which are convex
in the sense of definition \ref{convexpictures}. Hence, the procedure of Corollary \ref{algorithm} always
results in an exceptional sequence with $n - t \leq 4$. Then as in the last part of \ref{nequals4},
if $n - t = 4$ then by Proposition \ref{hirzebruchnodefects} we know that the $\sh{F}_i$ have ranks
$\pm 1$. If $n - t = 3$, the $\sh{F}_i$ might have ranks different from $\pm 1$, but as in
the case of $\mathbb{P}^2$ we can conclude that by further mutation we can transform the
sequence to $\sh{Z}_1, \dots \sh{Z}_{n - 3}, \sh{F}_1', \sh{F}_2', \sh{F}_3'$ with $\rk \sh{F}_i' = \pm 1$
for $i = 1, 2, 3$.
\end{proof}

We are now ready to proof Theorem \ref{deltatheorem}:
\smallskip

\paragraph{\bf Proof of Theorem \ref{deltatheorem}}
Let $\sh{Z}, \sh{E}_2, \dots, \sh{E}_n$ be an exceptional sequence with $z = 0$.
If we follow the procedure described in the proof of Corollary \ref{algorithm}, we can produce
an exceptional sequence of the form $\sh{Z}_1 = \sh{Z}, \sh{Z}_2, \dots, \sh{Z}_{t}, \sh{E}_1, \dots,
\sh{E}_{n - t}$, where $z_i = 0$ and $e_j^2 \neq 0$. By Corollary \ref{rankonezero} we can even
assume that $n - t = 3$ or $n - t = 4$ and $e_j^2 = 1$ for all $j$. Then we can apply either
Proposition \ref{p2nodefects} or Proposition \ref{hirzebruchnodefects} to show that $\delta_i \in
\{0, -1\}$ for every $i$ and in particular $\delta_1 = \delta_\sh{Z} \in \{0, -1\}$.

\smallskip

The following theorem can also be considered as a corollary of Proposition \ref{winding} and
Corollary \ref{rankonezero}.

\begin{theorem}\label{rankone}
Let $X$ be a numerically rational surface. Then any numerically exceptional sequence on $X$ can be
transformed by mutation into a numerically exceptional sequence consisting only of objects of rank one.
\end{theorem}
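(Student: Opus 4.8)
The plan is to read the statement off from Corollary \ref{rankonezero} and then remove the leftover rank-zero objects by an explicit mutation. First I would apply Corollary \ref{rankonezero} to the given numerically exceptional sequence (of full length $n$, as is needed for the reduced toric system of Proposition \ref{winding}): by its ``in particular'' clause we may replace the sequence, up to mutation, by one of the shape $\sh{Z}_1, \dots, \sh{Z}_t, \sh{F}_1, \dots, \sh{F}_{n-t}$ in which every $\sh{Z}_i$ has rank zero and every $\sh{F}_j$ has rank $\pm 1$; by the final step of Corollary \ref{algorithm} the rank-zero objects may be taken to occupy the leftmost range. If $t = 0$ only the signs need adjusting, so I assume $t \geq 1$.

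The crucial observation is that a rank-zero object placed immediately to the \emph{left} of a rank-one object can be mutated into an object of rank $\pm 1$, thereby strictly decreasing the number of rank-zero members. Concretely, consider the rightmost rank-zero object $\sh{Z}_t$, whose right-hand neighbour is $\sh{F}_1$ with $f_1 = \pm 1$. Since $\sh{F}_1$ lies to the right of $\sh{Z}_t$, we have $\chi(\sh{F}_1, \sh{Z}_t) = 0$, so $\sh{F}_1$ lies in the left-orthogonal complement of $\sh{Z}_t$; by Lemma \ref{chirank} together with $\delta_{\sh{Z}_t} = 0$ (Proposition \ref{deltazero}) this gives $\chi(\sh{Z}_t, \sh{F}_1) = -f_1$. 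Performing the right mutation $R_t$ turns the pair $\sh{Z}_t, \sh{F}_1$ into $\sh{F}_1, R_{\sh{F}_1}\sh{Z}_t$, and the rank formula of \ref{mutationrank} yields
\begin{equation*}
\rk R_{\sh{F}_1}\sh{Z}_t = \chi(\sh{Z}_t, \sh{F}_1)\, f_1 - \rk \sh{Z}_t = -f_1^2 = -1 \neq 0 .
\end{equation*}
Hence this single mutation produces a numerically exceptional sequence with one fewer rank-zero object, all of whose other members still have rank $\pm 1$. The object $\sh{Z}_{t-1}$ is untouched and still has the rank-one right-hand neighbour $\sh{F}_1$, so the same argument applies verbatim; iterating from right to left over $\sh{Z}_{t-1}, \dots, \sh{Z}_1$ eliminates every rank-zero object and leaves a numerically exceptional sequence all of whose objects have rank $\pm 1$.

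It then remains to normalise the signs: for any member $\sh{E}$ of rank $-1$ I would replace it by its shift $\sh{E}[1]$, which has rank $+1$. Since shifting a single member of a numerically exceptional sequence again produces a numerically exceptional sequence, the result is a numerically exceptional sequence consisting only of rank-one objects, as claimed.

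The step that carries the real weight is the elimination of the rank-zero objects, and its validity rests entirely on the vanishing $\delta_{\sh{Z}} = 0$ of Proposition \ref{deltazero}: it is precisely this vanishing that forces $\chi(\sh{Z}, \sh{F}) = -\rk \sh{F}$ and hence $\rk R_{\sh{F}}\sh{Z} = -f^2$, so that the rank-zero object is genuinely converted into a rank-one object rather than merely ``hopping'' to a neighbouring ray as in Proposition \ref{zerojumps}. Everything else — the preliminary reduction to ranks $\pm 1$ and $0$, and the final sign normalisation — is either handed to us by Corollary \ref{rankonezero} or is a formal consequence of shift-invariance.
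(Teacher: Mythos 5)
Your proposal is correct and follows essentially the same route as the paper: reduce via Corollary \ref{rankonezero} to a sequence of ranks $0$ and $\pm 1$, then eliminate each rank-zero object by mutating it past a rank-$(\pm 1)$ neighbour, using $\chi(\sh{Z}, \sh{F}) = -\rk \sh{F}$ (Proposition \ref{deltazero}, Corollary \ref{rankzerorank}) to conclude that the mutated object has rank $-f^2 = -1 \neq 0$. The only differences are cosmetic: the paper dresses this elimination step in toric language (each such mutation is a blow-up of the associated fan, which serves its later remarks), whereas you argue purely numerically, and your explicit sign normalisation by shifts is left implicit in the paper's standing conventions.
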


\begin{proof}
By Corollary \ref{rankonezero}, every sequence can be transformed into an exceptional sequence of
objects of ranks one and zero. To this sequence is associated the fan of a toric surface which is
given by the Gale duals $\tilde{l}_1, \dots, \tilde{l}_{n - t}$ of the contracted toric system.
By assumption, we have $\det(\tilde{l}_i \tilde{l}_{i + 1}) = 1$ for all $i$, hence the toric surface
is smooth. If we instead consider the Gale duals of the uncontracted toric system, we obtain by
Proposition \ref{multiplicity} the same fan, but where the rays come with possible multiplicities, i.e.
if $\sh{E}_i, \sh{E}_{i + 1}, \dots, \sh{E}_j$ is a sub-sequence for $i < j$ such that $e_k = 0$ for
all $i < k < j$, then the multiplicity of the Gale dual of the contracted element
$s(\sh{E}_j) - s(\sh{E}_i) + \sum_{k = i + 1}^{j - 1} c_1(\sh{E}_k)$ is $j - i$. For any pair
$\sh{E}_i, \sh{E}_{i + 1}$ with $e_i = 0$, we have $\rk L_{\sh{E}_i} \sh{E}_{i + 1} = -e_i$
and $\rk R_{\sh{E}_{i + 1}} \sh{E}_i = -e_{i + 1}$ (and similarly if $e_{i + 1} = 0$).
We have seen in Section \ref{rankzeromoving} the mutation from $\sh{E}_i, \sh{E}_{i + 1}$ to
$L_{\sh{E}_i} \sh{E}_{i + 1}, \sh{E}_i$ leaves the associated fan unchanged, except for a possible
exchange of multiplicities. On the other hand, if $e_{i + 1} \neq 0$, then by Theorem
\ref{deltatheorem} and Lemma \ref{zeromutationconfiguration} the mutation to
$\sh{E}_{i + 1}, R_{\sh{E}_{i + 1}} \sh{E}_i$ yields a new object of rank $\pm 1$, and the
associated fan obtains a new primitive vector $l$, whose position is given by the relation
$e_{i + 1}^2 \tilde{l}_k - e_{i + 1}^2 l + e_{i + 1}^4 \tilde{l}_{k + 1} = 0$ for the corresponding
$1 \leq k \leq n - t$, and with $e_{i + 1}^2 = 1$ we get $l' = \tilde{l}_k + \tilde{l}_{k + 1}$.
That is, the right
mutation yields a bigger fan which corresponds to a toric blow-up of the original fan. Similarly, if
$e_i \neq 0$ and $e_{i + 1} = 0$, then left mutation results in a blow-up as well. Now we can iterate
the following two steps of mutations $\sh{E}_i, \sh{E}_{i + 1}$ to $L_{\sh{E}_i} \sh{E}_{i + 1},
\sh{E}_i$ (respectively $\sh{E}_{i + 1}, R_{\sh{E}_{i + 1}} \sh{E}_i$):
\begin{enumerate}[1)]
\item If $e_i = 0$ (respectively $e_{i + 1} = 0$), which only affect multiplicities of the primitive
vectors.
\item If $(e_i^2, e_{i + 1}^2) = (1, 0)$ (respectively $(e_i^2, e_{i + 1}^2) = (0, 1)$), which correspond
to smooth blow-ups.
\end{enumerate}
Iterating these steps at will we can realize every smooth toric surface which can be obtained from
the original $\tilde{l}_1, \dots, \tilde{l}_{n - t}$ by at most $t$ blow-ups.
\end{proof}

\begin{remark}\label{delayedremark}
By Theorem \ref{rankone}, any toric system coming from a numerically exceptional sequence indeed
can be constructed by mutation from a toric system associated to rank one objects as were considered
in \cite{HillePerling11} (see also Remark \ref{delayremark}).
Note that the relevant analysis of \cite[\S 2]{HillePerling11} is strictly on the numerical level
and therefore also applies to any numerical exceptional sequence whose elements' classes
in $\knum(X)$ coincide with that of invertible sheaves. Moreover, note that any numerically exceptional
object $\sh{L}$ of rank one has the same class in $\knum(X)$ as an invertible sheaf $\sh{O}(D)$,
where the class of $D$ in $\Ch^1(X)$ coincides with $c_1(\sh{L})$. This is easy to see from
$c_2(\sh{L}) = 0$ by \ref{eulerchar1} (\ref{eulerchar1ii}) and by the injectivity of the
Chern-isomorphism with $\ch(\sh{L}) = \ch(\sh{O}(D)) \in \Ch^1(X)_\Q$.
\end{remark}

\begin{example}\label{hirzebruchexample}
We construct examples of exceptional objects of rank one which are not isomorphic to
invertible sheaves.
Consider an even Hirzebruch surface $\mathbb{F}_a$, where $a = 2b$ for $b >= 0$ and denote
$P, Q$ the generators of its nef cone, where $P^2 = 0, Q^2 = a$ and $P \cdot Q = 1$. In
\cite[Proposition 5.2]{HillePerling11} it was shown that $\mathbb{F}_a$ admits two families
of toric systems for exceptional sequences consisting of objects of rank one, which are of
the following form:
\begin{align*}
F_1(s): & &  (- s - b) P + Q, P, (s - b) P + Q, P & \text{ for } s \in \Z,\\
F_2(s): & & -b P + Q, P + s(-b P + Q), -b P + Q, P - s(-b P + Q) & \text{ for } s \in \Z.
\end{align*}
Note that both families meet at $s = 0$. Any toric system of type $F_1(s)$ corresponds to an
exceptional sequence of invertible sheaves $\mathbf{L} = \sh{L}_1, \dots, \sh{L}_4$. Assume that,
say, $c_1(\sh{L}_2, \sh{L}_3) = P$, then $\rk L_{\sh{L}_2} \sh{L}_3 = \rk R_{\sh{L}_3} \sh{L}_2 = 1$
and both mutations are isomorphic to invertible sheaves, and
$c_1(L_{\sh{L}_2} \sh{L}_3, \sh{L}_2) = c_1(\sh{L}_3$, $R_{\sh{L}_3} \sh{L}_2) = P$.
Moreover, $c_1(\sh{L}_1, L_{\sh{L}_2} \sh{L}_3) = c_1(\sh{L}_1, \sh{L}_2) - 2P$ and
$c_1(R_{\sh{L}_3} \sh{L}_2) = c_1(\sh{L}_3, \sh{L}_4) - 2P$. That is, from a sequence of type
$F_1(s)$ we obtain by left mutation of the pair $\sh{L}_2, \sh{L}_3$ a sequence of type
$F_1(s - 1)$ and by right mutation a sequence of type $F_1(s + 1)$ and we can conclude that
the exceptional sequences of invertible sheaves of type $F_1(s)$ can be transformed into each other
via mutations (note 2 facts: 1. that this is strictly true only up to overall twist by an invertible
sheaf; 2. $Y(\mathbf{L}) \simeq \mathbf{F}_{2s}$, which due to the type of the intersection
form is the only allowed case).
We can argue similarly that exceptional sequences of type $F_2(s)$ can be transformed into each
other via mutations.

Both families intersect in the particular case $s = 0$, where
$Y(\mathbf{L}) \simeq \mathbf{P}^1 \times \mathbf{P}^1$. Then, starting with an exceptional sequence
of invertible sheaves representing $F_1(0) = F_2(0)$, we can produce both families by either mutating
the pair $\sh{L}_2, \sh{L}_3$ (to obtain $F_1(s)$) or $\sh{L}_1, \sh{L}_2$ (for $F_2(s)$). However,
by \cite[Proposition 5.2]{HillePerling11}, for $b > 0$ and $s \neq 0$, the resulting sequences of type
$F_2(s)$ cannot consist entirely of invertible sheaves.
\end{example}

\begin{remark}\label{relaxconditions}
Recent work by Vial \cite{Vial15} suggests that it should be possible to relax the conditions on our
surface $X$ by dropping the requirement that $K_X^2 = 12 - n$. We will give a brief outline on how this
condition can be dropped in many cases, though we are not able to remove it completely.

First, we point
out that Corollary \ref{rankonezero} can indeed be proved without $K_X^2 = 12 - n$. An analysis of the
proof shows that this condition enters only in Proposition \ref{hirzebruchnodefects} in order to
derive the integrality of $\frac{1}{e_j^2} + \frac{1}{e_{j + 1}^2}$ (and thus $e_j^2 = e_{j + 1}^2 = 1$).
Without the condition, we get only that $4\left(\frac{1}{e_j^2} + \frac{1}{e_{j + 1}^2}\right)$ is integral,
so that we have, up to order, the additional possibilities $e_j^2 = e_{j + 1}^2 = 4$ and $e_j^2 = 1, e_{j + 1}^2 = 4$.
We can exclude the first case right away, because necessarily $\gcd\{e_1, e_2, e_3, e_4\} = 1$.
For second case, we can assume without loss of generality that $e_1^2 = e_2^2 = 1, e_3^2 = e_4^2 = 4$
and $\tilde{l}_4 = -\tilde{l}_2$. Moreover, $\tilde{l}_1$ and $\tilde{l}_2$ form a basis
and we can assume without loss of generality that $\tilde{l}_4 = -4\tilde{l}_1 + (4k + 1) \tilde{l}_2$ for
some integer $k$. By successive mutation of the pair $\sh{E}_3, \sh{E}_4$, we can arrange that $k = 0$
(see Section \ref{mutationsection}). Then the vectors $\tilde{l}_1, \tilde{l}_2, \tilde{l}_3$ are in
a convex configuration with $a_2 < 0$ in the sense of Lemma \ref{concavemutation}. Then by mutating
the pair $\sh{E}_2, \sh{E}_3$, we obtain $\sh{Z}_1, \dots, \sh{Z}_{n - 4}, \sh{E}_1, L_{\sh{E}_2} \sh{E}_3, \sh{E}_2, \sh{E}_4$,
where we compute $\rk  L_{\sh{E}_2} \sh{E}_3 = 0$, which puts us into the position of Proposition
\ref{p2nodefects}. By the first displayed formula in the proof
of Proposition \ref{hirzebruchnodefects}, we have:
$$
K_X^2 = 9 - n - 4 \sum_{i = 1}^{n - 4} \delta_i (\delta_i + 1) \equiv 1 - n \quad (8).
$$
However, for the case $n - t = 3$ we have by the formula in the proof
of Proposition \ref{p2nodefects}:
$$
K_X^2 = 12 - n -  4 \sum_{i = 1}^{n - 3} \delta_i (\delta_i + 1) \equiv 4 - n  \quad(8).
$$
With this contradiction, we can exclude the case $e_j^2 = 1, e_{j + 1}^2 = 4$ and conclude that Corollary
\ref{rankonezero} indeed holds without the assumption $K_X^2 = 12 - n$.

Second, with the previous remarks we have established that
$$
4 \sum_i \delta_i (\delta_i + 1) = (10 - K_X^2) - \rk \chnum^1(X),
$$
in particular both sides of this equation are divisible by $8$ (this was pointed out to me by C. Vial).
Hence, unless $(10 - K_X^2) - \rk \chnum^1(X)$ is nonzero and divisible by $8$, the condition $\chi(\sh{O}_X) = 1$
and the existence of an exceptional sequence of maximal length already imply $K_X^2 = 12 - n$.
All our main results then remain true under these weaker assumptions.
\end{remark}

\section{The main theorem}\label{maintheorem}

We are now in possession of everything we need in order to show our main theorem.

\begin{theorem}\label{maintheorem1}
Let $X$ be a numerically rational surface with $\rk \knum(X) = n$ and
$\mathbf{E} = \sh{E}_1,\dots, \sh{E}_n$
a numerically exceptional sequence. Then to the maximal sub-sequence of objects of nonzero rank
$\sh{E}_{i_1}, \dots, \sh{E}_{i_{n - t}}$ there is associated in
a canonical way a complete toric surface $Y(\mathbf{E})$ with $n - t$ torus fixpoints. These fixpoints
are either smooth (if $e_{i_j}^2 = 1$) or $T$-singularities of type
$\frac{1}{e_{i_j}^2}(1, k_{i_j} e_{i_j} - 1)$ with $\gcd\{k_{i_j}, e_{i_j}\} = 1$.
\end{theorem}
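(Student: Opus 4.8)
The plan is to read the singularity types straight off the fan produced in Proposition \ref{winding}, using the local determinant computations of Section \ref{localconstellations} for the orders and the circumference segments of Section \ref{mutationsection} to pin down the precise type.

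First I would fix the bijection between fixpoints and objects. Passing to the maximal nonzero-rank subsequence $\sh{F}_1, \dots, \sh{F}_{n-t}$ (with ranks $f_1, \dots, f_{n-t}$) and its reduced toric system, Proposition \ref{winding} tells us that the Gale duals $\tilde{l}_1, \dots, \tilde{l}_{n-t}$ are the primitive ray generators of a complete toric fan $Y(\mathbf{E})$ whose maximal cones are spanned by cyclically consecutive pairs. This produces exactly $n - t$ torus fixpoints, one attached to each $\sh{F}_j$, namely the cone spanned by the two rays adjacent to the ray of the pair through $\sh{F}_j$. By Proposition \ref{nonflatcircuit} and Corollary \ref{localfan}, applied to the reduced system (which contains no rank-zero objects, cf. the remark after Lemma \ref{positiveselfintersection}), the lattice determinant of this cone equals $f_j^2$. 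Hence the fixpoint is smooth exactly when $f_j^2 = 1$, and otherwise is a cyclic quotient singularity of order $f_j^2$.

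Next I would determine the type when $f_j^2 > 1$. Writing the corresponding cone as $\langle l_-, l_0 \rangle$ with $\det(l_-, l_0) = f_j^2$, I consider the circumference segment $w := (l_0 - l_-)/f_j$ of the pair through $\sh{F}_j$, which by Lemma \ref{peri1} is an integral lattice vector. Choosing coordinates with $l_- = (1,0)$ and $l_0 = (x, f_j^2)$, the toric normal form reads off the type as $\frac{1}{f_j^2}(1, q)$ with $q \equiv -x \pmod{f_j^2}$; since $w = ((x-1)/f_j,\, f_j)$ is integral we get $x \equiv 1 \pmod{f_j}$, hence $q \equiv -1 \pmod{f_j}$, i.e. $q = k_j f_j - 1$ for some integer $k_j$. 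This already exhibits the fixpoint as a $T$-singularity of the asserted shape $\frac{1}{f_j^2}(1, k_j f_j - 1)$ with $f_j = \rk \sh{E}_{i_j}$.

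The hard part will be the coprimality $\gcd(k_j, f_j) = 1$, i.e. that the singularity is genuinely a Wahl singularity and not a $T$-singularity carrying a nontrivial factor. In the above coordinates this is exactly the assertion that $w$ is \emph{primitive}: one computes $\gcd((x-1)/f_j,\, f_j) = \gcd(k_j, f_j)$, so $w$ is primitive if and only if $\gcd(k_j, f_j) = 1$. I would establish primitivity by a mutation argument: by Lemma \ref{peri3} the lattice length of a circumference segment is invariant under mutation of its pair, and by Theorem \ref{rankone} (equivalently Corollary \ref{rankonezero}) the sequence can be mutated into one consisting of rank-one objects. Tracking $w$ through this reduction — individual mutations act on it either trivially or through the $\operatorname{GL}_\Z(N)$-transformations of Lemmas \ref{peri2} and \ref{peri3}, while rank-zero mutations only shuffle multiplicities by Proposition \ref{zerojumps} — its length never changes; at the rank-one endpoint the relevant cone has determinant $1$, so its segment $w$ satisfies $\det(l_-, w) = 1$, is part of a lattice basis, and is therefore primitive. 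Consequently $w$ was primitive all along and $\gcd(k_j, f_j) = 1$. The subtlety I expect to have to argue with care is the legitimacy of this tracking: the object $\sh{F}_j$ and its cone are destroyed and recreated along the reduction, so one must verify that the length preserved by Lemma \ref{peri3} at each intermediate step is indeed that of the segment governing our fixpoint, rather than one that could jump when the combinatorial type of the local configuration changes.
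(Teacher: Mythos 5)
Your proposal follows the same route as the paper's own proof: Proposition \ref{winding} supplies the complete fan, Proposition \ref{nonflatcircuit} gives the order $e_{i_j}^2$ of each cone, integrality of the segment $w$ (Lemma \ref{peri1}) forces the type $\frac{1}{e_{i_j}^2}(1, k_{i_j}e_{i_j}-1)$ in suitable coordinates, coprimality is reformulated as primitivity of $w$ (this is exactly Lemma \ref{peri0} in the paper, which you re-derive by hand), and primitivity is obtained by transporting $w$ along a mutation path to a rank-one sequence, using the length-invariance of Lemma \ref{peri3} together with Theorem \ref{rankone}.

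However, the subtlety you flag at the end is a genuine gap, not a formality, and your main argument as written does not cover it. The reduction of Theorem \ref{rankone} / Corollary \ref{rankonezero} passes through steps in which rank-zero objects are \emph{created} (a mutation of a pair of nonzero-rank objects can produce an object of rank zero, cf. the procedure of Corollary \ref{algorithm}) and later \emph{destroyed} (mutating a rank-zero object against a rank-one neighbour produces a rank $\pm 1$ object, the blow-up steps in the proof of Theorem \ref{rankone}). At such steps neither of your cited tools applies: Lemma \ref{peri3} is only valid when both the input pair and the mutated pair consist of objects of nonzero rank, and Proposition \ref{zerojumps} only governs moving an \emph{already existing} rank-zero object past its neighbours --- it says nothing about the step at which a ray of the reduced fan appears or disappears, which is precisely when circumference segments are destroyed or newly created, and precisely where a lattice length could in principle jump. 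The paper closes this gap with Lemma \ref{zeromutationconfiguration}, which you never invoke: if $\rk L_{\sh{E}_i}\sh{E}_{i+1} = 0$, the relation
\begin{equation*}
e_i^4\, l_{i-1} - e_i^2\, l_i + e_i^2\, l_{i+1} = 0, \quad \text{i.e.}\quad l_i = e_i^2\, l_{i-1} + l_{i+1},
\end{equation*}
exhibits the cones $\langle l_{i-1}, l_i\rangle$ and $\langle l_{i-1}, l_{i+1}\rangle$ as related by a shear of $N$ fixing $l_{i-1}$, hence isomorphic as affine toric surfaces; consequently $l_{i+1} - l_{i-1}$ has lattice length $e_i$ if and only if $l_i - l_{i-1}$ does, so primitivity of the reduced segments survives the creation and destruction of rank-zero objects. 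Inserting this lemma at the flagged point completes your argument; without it, the claim that the length ``never changes'' is unjustified exactly at the steps where the combinatorial type of the local configuration changes.
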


\begin{remark}
To justify ``canonical'', we should, strictly speaking, also incorporate the multiplicities of the
rays of $Y(\mathbf{E})$ coming from the rank zero objects. However, in light of our discussion in
Section \ref{rankzeromoving}, at least on the combinatorial level
there seems not much to be gained from this.
\end{remark}

In absence of rank zero objects, we can state Theorem \ref{maintheorem1} in a more convenient
form.

\begin{theorem}\label{maintheorem2}
Let $X$ be a numerically rational surface and let $\mathbf{E} = \sh{E}_1,\dots, \sh{E}_n$
be a numerically exceptional sequence of maximal length such that $(\rk \sh{E}_i)^2 = e_i^2 > 0$ for
every $i$. Then to this sequence there is associated in a canonical way a complete toric surface
$Y(\mathbf{E})$
with $n$ torus fixpoints which are either smooth (if $e_i^2 = 1$) or $T$-singularities of type
$\frac{1}{e_i^2}(1, k_i e_i - 1)$ with $\gcd\{k_i, e_i\} = 1$.
Moreover, this correspondence induces a natural isomorphism of Chow rings
$\Ch^*(Y(\mathbf{E}))_\Q \rightarrow \chnum^*(X)_\Q$ which maps $K_{Y(\mathbf{E})}$ to $K_X$.
\end{theorem}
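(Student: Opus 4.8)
The plan is to read off the fan of $Y(\mathbf{E})$ from the Gale duals, then identify each torus fixpoint by a local computation; the only genuinely new input beyond the existence of the fan is a primitivity statement for the circumference segments. First I would note that since every $e_i > 0$ there are no rank zero objects, so the reduced and unreduced toric systems coincide and Proposition \ref{winding} applies verbatim: the vectors $l_1, \dots, l_n$, primitive by Proposition \ref{primitivity}, span the rays of a complete fan whose maximal cones are $\sigma_i = \langle l_i, l_{i+1}\rangle$ read cyclically. This defines $Y(\mathbf{E})$ together with its $n$ torus fixpoints $x_i$ attached to the $\sigma_i$, and the whole construction depends only on the classes $[\sh{E}_i] \in K_0(X)$, which is the sense of ``canonical''. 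By Corollary \ref{localfan} and Proposition \ref{nonflatcircuit} one has $\det(l_i, l_{i+1}) = e_i^2$, so $x_i$ is smooth exactly when $e_i = 1$, and for $e_i > 1$ it is a cyclic quotient singularity of index $e_i^2$, i.e. of type $\frac{1}{e_i^2}(1, q_i)$ for a unique residue $q_i$ with $0 < q_i < e_i^2$ and $\gcd(q_i, e_i) = 1$.

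Next I would pin down $q_i$ using the circumference segment $p_i := l_{i+1} - l_i$ and the vector $w_i := p_i/e_i$ of \ref{circumferenceformulas}. Choosing an oriented lattice basis in which $l_i = (e_i^2, -q_i)$ and $l_{i+1} = (0,1)$ (legitimate since $\det(l_i, l_{i+1}) = e_i^2$), one computes $p_i = (-e_i^2, 1 + q_i)$, so that $w_i$ is integral if and only if $e_i \mid (1 + q_i)$, that is $q_i \equiv -1 \pmod{e_i}$. Integrality of $w_i$ is precisely Lemma \ref{peri1}, so we may write $q_i = k_i e_i - 1$, which already yields the claimed form $\frac{1}{e_i^2}(1, k_i e_i - 1)$. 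In the same coordinates $w_i = (-e_i, k_i)$, whose lattice length is $\gcd(k_i, e_i)$; hence $\gcd(k_i, e_i) = 1$ is equivalent to $w_i$ being a primitive lattice vector, and it is exactly this coprimality that distinguishes the Wahl $T$-singularities $\frac{1}{e_i^2}(1, k_i e_i - 1)$ (the case $d = 1$, $n = e_i$ in the notation $\frac{1}{dn^2}(1, dna - 1)$) from the spurious cyclic quotients of index $e_i^2$ arising for $d > 1$.

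The hard part, therefore, is the primitivity of $w_i$, which I would prove by a mutation-invariance argument in the spirit of Propositions \ref{primitivity} and \ref{mutationlocal}. By Lemma \ref{peri3} the lattice length of each $w_i$ is unchanged under mutation of the corresponding exceptional pair, and by Proposition \ref{mutationlocal} mutations act locally on the $l_j$, so the lattice length attached to a given fixpoint is a genuine invariant of the mutation orbit (the bookkeeping incurred when passing the rank zero objects that may be created along the way being harmless by \ref{zerojumpremark}). On the other hand, Theorem \ref{rankone}, via Corollary \ref{rankonezero}, lets us mutate $\mathbf{E}$ into a sequence consisting entirely of rank one objects, for which every cone satisfies $\det(l_j, l_{j+1}) = 1$; then $l_j, l_{j+1}$ form a lattice basis, whence $w_j = l_{j+1} - l_j$ is primitive because $l_j, l_{j+1} - l_j$ is again a basis.

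Comparing lattice lengths across the orbit then forces each original $w_i$ to have lattice length $1$, i.e. to be primitive, giving $\gcd(k_i, e_i) = 1$. This identifies each $x_i$ with the $T$-singularity $\frac{1}{e_i^2}(1, k_i e_i - 1)$, $\gcd(k_i, e_i) = 1$, and completes the proof. I expect the primitivity step to be the main obstacle: the index $e_i^2$ and the residue class $q_i \equiv -1 \pmod{e_i}$ are forced by the Riemann--Roch arithmetic alone, but excluding the $d > 1$ $T$-singularities of index $e_i^2$ genuinely requires the global reduction to rank one rather than any purely local estimate.
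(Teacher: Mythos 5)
Your proposal reproduces the paper's own proof almost step for step: Proposition \ref{winding} for the existence of the fan, the local normal form together with integrality of $w_i$ (Lemma \ref{peri1}) to force $q_i = k_i e_i - 1$, the observation (Lemma \ref{peri0}) that $\gcd\{k_i, e_i\} = 1$ is equivalent to primitivity of $w_i$, and finally Lemma \ref{peri3} plus Theorem \ref{rankone} to transport primitivity back from a sequence of rank one objects. All of that is correct and is exactly the route taken in the paper.

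There is, however, one genuine gap, and it sits precisely at the step you dismiss in parentheses. Lemma \ref{peri3} asserts invariance of the lattice length of $w_i$ only for mutations of a pair of nonzero-rank objects \emph{whose mutation again consists of nonzero-rank objects}; it says nothing about the steps at which a rank zero object is created or destroyed. But the mutation path provided by Corollary \ref{algorithm}, Corollary \ref{rankonezero} and Theorem \ref{rankone} unavoidably passes through such steps: the intermediate sequences have $t = n-3$ or $t = n-4$ rank zero members, which are then converted back into rank $\pm 1$ objects by blow-up-type mutations. At such a step the set of rays, and hence the set of circumference segments, changes (two cones merge, or a cone is subdivided), so ``the lattice length attached to a given fixpoint'' is not a priori well defined across it. The reference you invoke, \ref{zerojumpremark}, does not cover this situation: it only says that moving an \emph{already existing} rank zero object left or right makes its multiplicity hop along an unchanged fan; it is silent about creation and destruction. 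The paper closes exactly this gap with Lemma \ref{zeromutationconfiguration}: when $\rk L_{\sh{E}_i}\sh{E}_{i+1} = 0$ one has $l_i = e_i^2 l_{i-1} + l_{i+1}$, and the linear map sending $l_i \mapsto l_{i-1}$ while fixing $l_{i-1}$ identifies the affine toric surfaces of the cones $\langle l_{i-1}, l_i \rangle$ and $\langle l_{i-1}, l_{i+1} \rangle$, so that $l_{i+1} - l_{i-1}$ has lattice length $e_i$ if and only if $l_i - l_{i-1}$ does, while $l_{i+1} - l_i = e_i^2\, l_{i-1}$ is the circumference segment of a $T$-singularity of order $e_i^4$. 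Some argument of this kind, tracking lattice lengths through the rank-zero-creating and rank-zero-destroying mutations, is indispensable; without it the reduction to the rank one case does not yet yield primitivity of the original $w_i$, i.e. does not exclude the index-$e_i^2$ cyclic quotients with $d > 1$ that are not of class $T$.
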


\begin{remark}\label{divisorcorrespondence}
Note that the ring isomorphism exists by construction and is given by mapping the class of the
$i$-th torus invariant prime divisor $D_i$ of $Y(\mathbf{E})$ to $A_i$ of the associated toric system
(see \ref{completecase}).
It follows that $K_{Y(\mathbf{E})} = -\sum_{i = 1}^n D_i$ maps to $K_X = -\sum_{i = 1}^n A_i$.
\end{remark}

\begin{proof}[Proof of Theorem \ref{maintheorem1}]
By Proposition \ref{winding}, we have constructed our toric variety as the Gale duals of the
contracted toric system. It only remains to show that this surface indeed can only have
at most $T$-singularities. Consider the vectors $l_1, \dots, l_{n - t}$ generating the fan of
$Y(\mathbf{E})$ and their corresponding $n - t$ circumference segments $p_i = l_i - l_{i - 1}$. For
any given $i$ with $e_i^2 > 1$, with a convenient choice of coordinates we can represent the vectors
$l_{i - 1}, l_i$ as $l_{i - 1} = (1, 0)$ and $l_i = (-x, e_i^2)$ for some $0 < x < e_i^2$. We have
seen in Lemma \ref{peri1} that the vector $w_i = \frac{1}{e_i} p_i$ is integral, hence we get
$x = k_i e_i - 1$ for some $1 \leq k_i \leq e_i$. So, by Lemma \ref{peri0}, in order to show that
$\gcd\{k_i, e_i\} = 1$ it suffices to show that $w_i$ is primitive.
By Lemmas \ref{peri1} and \ref{peri3} we know that any left or right mutation
of a pair $\sh{E}_i, \sh{E}_{i + 1}$ with $e_i, e_{i + 1} \neq 0$ which results in a pair
$\sh{E}_i', \sh{E}_{i + 1}'$ such that $e_i', e_{i + 1}' \neq 0$
leaves the lattice lengths of the involved $w_i$ invariant. By Theorem \ref{rankone} we can
transform any sequence by mutation into a sequence of length $n$ of objects of rank one,
where obviously the $w_j$ are primitive for every $1 \leq j \leq n$. However, in this process we
might destroy or create new circumference segments by creating or destroying objects of rank zero,
and it remains to check whether we obtain non-primitive $w_j$ this way. By Theorem \ref{deltatheorem}
and Lemma
\ref{zeromutationconfiguration}, for every pair $\sh{E}_i, \sh{E}_{i + 1}$ with
$\rk L_{\sh{E}_i} \sh{E}_{i + 1} = 0$ we have the relation $e_i^4 l_{i - 1} - e_i^2 l_i +
e_i^2 l_{i + 1} = 0$, respectively we have $l_i = e_i^2 l_{i - 1} + l_{i + 1}$. Then mapping
$l_i$ to $l_{i - 1}$ extends to a linear map on $N$ which leaves $l_{i - 1}$ invariant and thus
the  pairs $l_{i - 1}, l_i$ and $l_{i - 1}, l_{i + 1}$ both correspond to cones whose associated
affine toric surfaces are isomorphic, hence $l_{i + 1} - l_{i - 1}$ has lattice length $e_i$ iff
$l_i - l_{i - 1}$ has. The difference $l_{i + 1} - l_i = e^2 l_{i - 1}$ clearly corresponds to
the circumference segment of a $T$-singularity of order $e_i^4$. This concludes the proof of the theorem.
\end{proof}

\begin{sub}
We conclude this section with an observation on cyclic strongly exceptional sequences. In
\cite[Theorems 5.13, 5.14]{HillePerling11} it was shown that any rational surface which admits a
cyclic strongly exceptional sequence necessarily has Picard-rank at most $7$ and that every del Pezzo
surface meeting this conditions indeed does admit a cyclic strongly exceptional sequence of invertible
sheaves. More generally, in \cite{vandenBergh02} van den Bergh constructed cyclic strongly exceptional
sequences for all del Pezzo surfaces, which for the Picard-ranks 8 and 9 cannot consist
of line bundles only. The crucial observation in \cite{HillePerling11} was the fact that for the rank
one case the associated toric surface must be a weak del Pezzo surface or,
equivalently, that the fan must be convex. This is easily seen to be true also in the general case,
as by \ref{circumferenceformulas}, $\chi(\sh{E}_i, \sh{E}_{i + 1}) \geq 0$ implies the convexity of the
triple $l_{i - 1}, l_i, l_{i + 1}$. The following implication for the Picard-rank then is quite
straightforward. Note that $K_X^2 > 0$ implies that $\rk \chnum^1(X) < 10$.
\end{sub}

\begin{theorem}
Let $X$ be a numerically rational surface.
Then the length of a cyclic strongly exceptional sequence on $X$ is at most $11$. In particular,
if $X$ admits a cyclic strongly exceptional sequence of maximal length, then
$K_X^2 > 0$.
\end{theorem}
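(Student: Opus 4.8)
The plan is to reduce both assertions to the single inequality $K_X^2 > 0$. Indeed, by Noether's formula for a surface with $p_g = q = 0$ one has $K_X^2 = 10 - \rk \pic(X)$, and since $\pic(X) \cong \Ch^1(X) \cong \Z^{n-2}$ this reads $K_X^2 = 12 - n$; hence $K_X^2 > 0$ is equivalent to $\rk\pic(X) = n-2 \leq 9$, i.e. to $n \leq 11$. So once strict positivity is established, the length bound and the ``in particular'' clause both follow. The only genuine input from \emph{strong} (as opposed to numerical) exceptionality that I would use is this: for every consecutive pair, $\chi(\sh{E}_i, \sh{E}_{i+1}) = \dim \Hom(\sh{E}_i, \sh{E}_{i+1}) \geq 0$, because all higher and lower Ext groups vanish by Definition \ref{exceptionaldef}; this persists across the cyclic boundary precisely because every winding is strongly exceptional.

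Normalizing so that all ranks $e_i$ are positive (a strongly exceptional collection is represented by sheaves, and the rank-zero members are absorbed into the reduced toric system of Section \ref{rankzeromoving}), the formulas of \ref{circumferenceformulas} give $\chi(\sh{E}_i, \sh{E}_{i+1}) = \det(w_{e_i}, w_{e_{i+1}}) = \frac{1}{e_i e_{i+1}}(a_i + e_i^2 + e_{i+1}^2)$, so $\chi(\sh{E}_i, \sh{E}_{i+1}) \geq 0$ means every consecutive triple $l_{i-1}, l_i, l_{i+1}$ is convex. I would then compute $K_X^2$ directly from the toric system. Writing $A_i = c_1(\sh{E}_i,\sh{E}_{i+1})/(e_i e_{i+1})$, using $-K_X = \sum_i A_i$ with the intersection relations $A_i \cdot A_{i+1} = 1/e_{i+1}^2$ and $A_i \cdot A_j = 0$ otherwise (\ref{nonzerotoricsystem}, Definition \ref{toricsystemdef}), and substituting $A_i^2 = a_i/(e_i^2 e_{i+1}^2)$ with $a_i = e_i e_{i+1}\,\chi(\sh{E}_i, \sh{E}_{i+1}) - e_i^2 - e_{i+1}^2$, the diagonal $1/e_i^2$ terms cancel against the off-diagonal contributions and one is left with the clean identity
\begin{equation*}
K_X^2 = \sum_i \frac{\chi(\sh{E}_i, \sh{E}_{i+1})}{e_i e_{i+1}}.
\end{equation*}
Every summand is non-negative, whence $K_X^2 \geq 0$ and therefore $n \leq 12$.

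To upgrade this to strict positivity in the full-length case I would argue by contradiction. If $K_X^2 = 0$ then every $\chi(\sh{E}_i, \sh{E}_{i+1})$ vanishes, so every consecutive pair is flat; flatness forces the reduced circumference segments $q_{e_i}, q_{e_{i+1}}$ to be parallel, so all $l_i$ lie on a single affine line and all rays fall into a half-space. This contradicts Proposition \ref{winding}, by which the $l_i$ of a length-$n$ sequence span the complete fan of a toric surface. Hence $K_X^2 > 0$ whenever the sequence generates $K_0(X)$, which is the second assertion. For the first, let $t$ denote the length: if $t < n$ then $t \leq n - 1 \leq 11$ because $n \leq 12$; if $t = n$ the sequence generates $K_0(X)$, so $K_X^2 > 0$ gives $n \leq 11$. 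Either way the length is at most $11$.

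The conceptual content is thus short; the main obstacle I anticipate is bookkeeping. First, one must justify the normalization $e_i > 0$ so that the summands in the displayed identity are truly non-negative rather than of mixed sign. Second, and more seriously, one must carry the rank-zero members correctly through the $K_X^2$ computation: there the differences $A_i$ skip over the rank-zero objects, whose classes contribute the orthogonal divisors $E_j$ with $E_j^2 = -1$, and I would need to verify that passing to the reduced toric system reproduces the same non-negative identity for $K_X^2$ (consistent with $K_X^2 = 12 - n$ being independent of how many members have rank zero).
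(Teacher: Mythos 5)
Your overall strategy coincides with the paper's (reduce everything to $K_X^2>0$ via $K_X^2=12-n$, get non-negativity of Euler characteristics from strong exceptionality, and invoke the associated fan for strictness), and your displayed identity is even stated at the beginning of Section \ref{singularitiessection}. But there is a genuine gap at the step you treat as harmless bookkeeping: the normalization that all ranks $e_i$ are positive. The justification you offer --- ``a strongly exceptional collection is represented by sheaves'' --- is not available in this paper, nor at this level of generality: members of a (cyclic) strongly exceptional sequence are arbitrary objects of $D^b(X)$, and nothing here forces them to be sheaves. Nor can you normalize by shifting: replacing $\sh{E}_i$ by $\sh{E}_i[1]$ flips the signs of $e_i$, of $\chi(\sh{E}_{i-1},\sh{E}_i)$ and of $\chi(\sh{E}_i,\sh{E}_{i+1})$ simultaneously, so each summand $\chi(\sh{E}_i,\sh{E}_{i+1})/(e_ie_{i+1})$ is shift-invariant and its sign is intrinsic, while strong exceptionality controls only the numerators for the \emph{given} representatives. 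Mixed-sign strongly exceptional pairs do exist: on $\mathbb{P}^1\times\mathbb{P}^1$ the pair $\sh{O},\ \sh{O}(1,-2)[1]$ is strongly exceptional with ranks $1$ and $-1$ and $\chi=2$. A consecutive pair with $e_ie_{i+1}<0$ contributes a non-positive summand, after which neither $K_X^2\geq 0$ nor your ``$K_X^2=0$ forces all pairs flat'' step survives. Worse, the sign condition you assume is essentially a consequence of the theorem itself: by Lemma \ref{paircommute}, $\chi(\sh{E}_i,\sh{E}_{i+1})+\chi(\sh{E}_{i+1},\sh{E}_i\otimes\omega_X^{-1})=e_ie_{i+1}K_X^2$ with both terms $\geq 0$, so consecutive nonzero ranks share a sign \emph{once} $K_X^2>0$ is known --- which is exactly what you are trying to prove.

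The paper's proof is engineered precisely to avoid this. It never needs all consecutive products to be positive, only \emph{one} pair $\sh{E}_i,\sh{E}_j$ with $e_ie_j>0$: by Lemma \ref{rankzerosequences} at most $n-3$ members have rank zero, so at least three have nonzero rank and two of them share a sign by pigeonhole. For that single pair, Lemma \ref{paircommute} gives the two-term identity
\[
\chi(\sh{E}_i,\sh{E}_j)+\chi(\sh{E}_j,\sh{E}_i\otimes\omega_X^{-1})=e_ie_jK_X^2,
\]
both terms are non-negative by cyclic strong exceptionality, and at least one is strictly positive because the fan of $Y(\mathbf{E})$ has a convex configuration of primitive vectors; hence $K_X^2>0$ with no hypothesis on the signs of the remaining ranks. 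This also disposes of your second, honestly flagged worry about the rank-zero members: the two-term identity makes no reference to the toric-system summation at all (though your $n$-term identity is in fact repairable there, picking up an extra $+t$ from $\tilde{A}_i^2=A_i^2+|\phi^{-1}(i)|$). If you replace your $n$-term decomposition by this two-term one for a single well-chosen pair, your sketch becomes the paper's proof.
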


\begin{proof}
By Proposition \ref{rankzerosequences}, we can choose $\sh{E}_i, \sh{E}_j$ with
$e_i e_j > 0$ and $i < j$. From strong cyclicity it follows that $\chi(\sh{E}_i, \sh{E}_j),
\chi(\sh{E}_j, \sh{E}_i \otimes \omega_X^{-1}) \geq 0$. 
From the fact that the fan for $Y(\mathbf{E})$ necessarily has a convex configuration of primitive vectors
it follows that at least one inequality is strict, where up to
cyclic renumbering we can assume without loss of generality that $\chi(\sh{E}_i, \sh{E}_j)
> 0$. By Lemma \ref{paircommute}, we have $\chi(\sh{E}_j, \sh{E}_i \otimes \omega_X^{-1}) =
e_i e_j K_X^2 - \chi(\sh{E}_i, \sh{E}_j) \geq 0$, hence
\begin{equation*}
e_i e_j K_X^2 \geq \chi(\sh{E}_i, \sh{E}_j) > 0.
\end{equation*}
from which our assertion follows.
\end{proof}

\section{Some remarks on the singularities}\label{singularitiessection}

Consider a numerically exceptional sequence $\mathbf{E} = \sh{E}_1, \dots, \sh{E}_n$, where for
simplicity we will assume that $e_i > 0$ for all $i$. Then by the classification of Section
\ref{maintheorem}, $Y(\mathbf{E})$ belongs to a very nice class of toric surfaces with the
following properties:
\begin{enumerate}[1)]
\item $\det(l_i, l_{i + 1}) = e_i^2$ for every $i$.
\item The circumference segments $p_i = l_i - l_{i - 1}$ have lattice length $e_i$ for every $i$.
\item $K_{Y(\mathbf{E})}^2 = K_X^2 = 12 - n$.
\end{enumerate}
For $K_{Y(\mathbf{E})}^2$, we have by \ref{ksquare}, \ref{eulersym}, and \ref{circumferenceformulas}
the formulas (where $q_i = p_i / e_i^2$ in the notation of \ref{ksquare}):
\begin{equation*}
K_{Y(\mathbf{E})}^2 = \sum_{i = 1}^n \det(q_i q_{i + 1}) = \sum_{i = 1}^n \frac{1}{e_i e_{i + 1}}
\chi(\sh{E}_i, \sh{E}_{i + 1}) = \sum_{i = 1}^n
\frac{c_1(\sh{E}_i, \sh{E}_{i + 1})^2 + e_i^2 + e_{i + 1}^2}{e_i^2 e_{i + 1}^2}.
\end{equation*}

As we have already stated in the introduction, this kind of surface can be
classified in terms of mutations, for instance by starting from the fan of a smooth toric surface.
However, a classification of such surfaces independent of exceptional sequences might be of interest
as well (e.g. one could relax property 3) above and just require that $K_{Y(\mathbf{E})}^2$ be
integral). This is beyond the scope of this article, but with this perspective we want
conclude with some general remarks on the singularities which occur in our setting.

Cyclic $T$-singularities of the form $\frac{de^2}{kde- 1}$, where $e, k, d > 0$ and $\gcd\{k, e\} = 1 $, have been
classified Kollar and Shepherd-Barron. Their statement is:

\begin{proposition}{\cite[Proposition 3.11]{KollarShepherdbarron88}}\label{tsing}
A cyclic singularity is of class $T$ if and only if its continued fraction expansion
$[b_1, \dots, b_r]$ is of one of the following forms:
\begin{enumerate}[(i)]
\item\label{tsingi} $[4]$ and $[3, 2, \dots, 2, 3]$ are of class T,
\item\label{tsingii} If $[b_1, \dots, b_r]$ is of class $T$ then so are
$[2, b_1, \dots, b_r + 1]$ and $[b_1 + 1, \dots, b_r, 2]$.
\item\label{tsingiii} Every singularity of class T that is not a rational double point
can be obtained by starting with one of the singularities described in (\ref{tsingi}) and
iterating the steps described in (\ref{tsingii}).
\end{enumerate}
\end{proposition}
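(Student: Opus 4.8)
The plan is to read the statement as a dictionary between the \emph{arithmetic} description of cyclic $T$-singularities and their \emph{combinatorial} description by Hirzebruch--Jung continued fractions, and then to run a descent on the combinatorial data. I take as the working definition that a cyclic quotient singularity is of class $T$ if and only if it is a rational double point of type $A$, with resolution string $[2,\dots,2]$, or it is of the form $\frac{1}{dn^2}(1, dna - 1)$ with $d \geq 1$, $n \geq 2$, $0 < a < n$ and $\gcd\{a,n\} = 1$, where $[b_1,\dots,b_r]$ denotes the Hirzebruch--Jung continued fraction of $\frac{dn^2}{dna-1}$. Since part (iii) explicitly excludes rational double points, I only need to match the remaining class $T$ singularities with the continued fractions produced by (i) and (ii). Throughout I encode such a singularity by the triple $(d,n,a)$, and I will show that the index $d$ is left invariant by both operations, while $n$ is the natural parameter for an induction.

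For the base cases in (i) a direct evaluation gives $[4] = \frac{4}{1}$, that is $\frac{1}{4}(1,1)$ with $(d,n,a) = (1,2,1)$, and $[3,2,\dots,2,3]$ with $k$ interior twos equals $\frac{4(k+2)}{2(k+2)-1}$, that is $\frac{1}{4d}(1,2d-1)$ with $d = k+2$ and $(n,a) = (2,1)$. Thus (i) realises precisely the data with $(n,a) = (2,1)$ for every index $d$. For the \textbf{if} direction it then remains to verify that the two operations of (ii) preserve class $T$; using the matrix encoding $[b_1,\dots,b_r] \leftrightarrow M(b_1)\cdots M(b_r)$ with $M(b) = \left(\begin{smallmatrix} b & -1 \\ 1 & 0 \end{smallmatrix}\right)$, a short computation shows that $[b_1,\dots,b_r] \mapsto [2, b_1,\dots,b_r+1]$ sends $(d,n,a)$ to $(d, 2n-a, n)$ and that $[b_1,\dots,b_r] \mapsto [b_1+1,\dots,b_r,2]$ sends $(d,n,a)$ to $(d, n+a, a)$. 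In both cases coprimality is preserved, since $\gcd\{n,2n-a\} = \gcd\{n+a,a\} = \gcd\{a,n\} = 1$, and the image is again of the shape $\frac{1}{dN^2}(1, dNA-1)$, hence of class $T$.

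For the \textbf{only if} direction I induct on $n$, the index $d$ being fixed. If $n = 2$ then necessarily $a = 1$ and the singularity appears in (i). So assume $n \geq 3$. The key lemma is that the continued fraction of $\frac{dn^2}{dna-1}$ then carries a $2$ at exactly one of its two ends. Indeed $b_1 = \lceil \frac{dn^2}{dna-1} \rceil = 2$ is equivalent, after clearing denominators, to $dn(2a-n) \geq 2$, hence to $a > \frac{n}{2}$; and reversing the continued fraction corresponds to the isomorphism $\frac{1}{dn^2}(1, dna-1) \cong \frac{1}{dn^2}(1, dn(n-a)-1)$, which follows from the reciprocity $(dna-1)(dn(n-a)-1) \equiv 1 \pmod{dn^2}$, so that $b_r = 2$ is equivalent to $a < \frac{n}{2}$. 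As $n \geq 3$ and $\gcd\{a,n\} = 1$ force $a \neq \frac{n}{2}$, exactly one end equals $2$. If $b_1 = 2$ the inverse of the first operation, $[2, c_2,\dots,c_s] \mapsto [c_2,\dots,c_s - 1]$, applies and returns the class $T$ datum $(d, a, 2a-n)$; if $b_r = 2$ the inverse of the second operation returns $(d, n-a, a)$. Since $\frac{n}{2} < a < n$ in the first case and $0 < a < \frac{n}{2}$ in the second, the new value of $n$, namely $a$ or $n-a$, is strictly smaller than $n$, so the induction descends to the base case and (iii) follows.

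I expect the main obstacle to lie in the two matrix computations, that is, in verifying that the forward operations of (ii) realise exactly the substitutions $(n,a) \mapsto (2n-a,n)$ and $(n,a) \mapsto (n+a,a)$ on the arithmetic data, and dually that peeling off a boundary $2$ returns precisely $(a, 2a-n)$ or $(n-a, a)$ with $d$ and coprimality intact. Everything else is short: the boundary estimate $a > \frac{n}{2} \Leftrightarrow b_1 = 2$ and the reciprocity $(dna-1)(dn(n-a)-1) \equiv 1$ are one-line verifications. The entire content of the proposition is that the Hirzebruch--Jung algorithm applied to $\frac{dn^2}{dna-1}$ is synchronised, step for step, with this Euclidean-type descent on the pair $(n,a)$.
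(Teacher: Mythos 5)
Your proposal is correct, but there is nothing in the paper to compare it against: the proposition is quoted verbatim, with citation, from \cite[Proposition 3.11]{KollarShepherdbarron88}, and the paper uses it purely as a black box (in \ref{resolutionshearterm}). What you have written is in essence the original Koll\'ar--Shepherd-Barron argument, carried out correctly: encode the non-RDP singularities of class $T$ as $\frac{1}{dn^2}(1,dna-1)$, show that the two string operations of (ii) realize the substitutions $(d,n,a)\mapsto(d,2n-a,n)$ and $(d,n,a)\mapsto(d,n+a,a)$, and run a Euclidean descent on $(n,a)$ with $d$ fixed. The computation you flag as the main obstacle does go through: with $M(b)=\left(\begin{smallmatrix}b&-1\\1&0\end{smallmatrix}\right)$ one has $M(b_1)\cdots M(b_r)=\left(\begin{smallmatrix}dn^2&-(dn(n-a)-1)\\dna-1&-(da(n-a)-1)\end{smallmatrix}\right)$, where the second column is forced by the reversal symmetry (which is exactly your reciprocity $(dna-1)(dn(n-a)-1)\equiv 1 \bmod dn^2$) together with $\det=1$; since prepending a $2$ is left multiplication by $M(2)$, incrementing $b_r$ is right multiplication by $\left(\begin{smallmatrix}1&0\\-1&1\end{smallmatrix}\right)$, incrementing $b_1$ is left multiplication by $\left(\begin{smallmatrix}1&1\\0&1\end{smallmatrix}\right)$, and appending a $2$ is right multiplication by $M(2)$, the two images have numerator and denominator $d(2n-a)^2$, $dn(2n-a)-1$ and $d(n+a)^2$, $da(n+a)-1$ respectively, exactly as you claim. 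Your descent is also watertight: for $n\geq 3$ exactly one of $b_1=2$ (iff $a>n/2$) and $b_r=2$ (iff $a<n/2$) holds, so the entry at the opposite end is $\geq 3$ and the inverse operation yields a legitimate string of entries $\geq 2$; uniqueness of Hirzebruch--Jung expansions then identifies that string with the expansion of the smaller datum $(d,a,2a-n)$ resp.\ $(d,n-a,a)$, whose $n$-parameter is still $\geq 2$ and strictly smaller, so the induction terminates at $(d,2,1)$, i.e.\ at $[4]$ or $[3,2,\dots,2,3]$. The only remark worth adding for completeness is that every string generated by (i) and (ii) contains an entry $\geq 3$, which is why the type-$A$ double points $[2,\dots,2]$, though of class $T$, must be excluded in (iii); this confirms that your restriction to non-RDPs loses nothing.
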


\begin{sub}\label{resolutionshearterm}
It follows from the analysis in the proof of \cite[Proposition 3.11]{KollarShepherdbarron88} that
$d = 1$ implies that the $T$-singularities of type $\frac{e^2}{ke - 1}$ are precisely those obtained
from $[4]$ and iterating step (\ref{tsingii}). Consider the minimal desingularization
of $Z \rightarrow Y = Y(\mathbf{E})$. Denote $I := \{1 \leq k \leq n \mid e_k^2 \neq 1\}$, then
the fan of $Z$ is obtained from $Y$ by inserting $t_k$ new primitive vectors corresponding to
the continued fractions $[b_1^k, \dots, b_{t_k}^k]$. We have
\begin{equation*}
K_Z^2 = \sum_{i = 1}^n a_i + 2n - \sum_{k \in I} (\sum_{j = 1}^{t_k} b_j^k - 2t_i) = 12 - n -
\sum_{k \in I} t_k,
\end{equation*}
where the $a_i$ are the self-intersection of the pullbacks of the original $n$ divisors to the
surface $Z$.
A simple induction using Proposition \ref{tsing} shows that $\sum_{j = 1}^{t_k} b_j^k = 1 + 3t_k$
for every $k \in I$, hence
\begin{equation*}
\sum_{i = 1}^n a_i = 12 - 3n + \vert I \vert.
\end{equation*}
This is almost the same formula as for the sum of self-intersection numbers of the toric prime divisors
on a smooth toric surface with $n$ rays, except that we obtain the number of singularities as an extra
term.
\end{sub}

\begin{sub}\label{shearterm}
Consider $e, f > 1$ and a triple of primitive lattice vectors $l_e, l, l_f$ such that the cones
generated by $l_e, l$ and $l, l_f$, respectively, correspond to $T$-singularities of types
$\frac{1}{e^2}(1, \alpha e - 1)$ and $\frac{1}{f^2}(1, \beta f - 1)$, respectively.
We can choose coordinates such that $l = (0, 1)$ and $l_e = (e^2, -\alpha e + 1 + \lambda_1 e^2)$,
$l_f = (-f^2, -\beta f + 1 + \lambda_2 f^2)$. It is easy to verify that the term $\lambda := \lambda_1
+ \lambda_2$ does not depend on any choice of coordinates with $l = (0, 1)$.
Using \ref{circumferenceformulas}, we compute:
\begin{equation*}
\det(l_f, l_e) = e^2 f^2 (\frac{\alpha}{e} + \frac{\beta}{f} + \lambda) - e^2 - f^2 =
e^2 f^2 \frac{\chi(\sh{E}, \sh{F})}{e f} - e^2 - f^2,
\end{equation*}
hence $\frac{1}{ef}\chi(\sh{E}, \sh{F}) = \frac{\alpha}{e} + \frac{\beta}{f} + \lambda$.
If $e = 1, f > 1$ (or $e > 1, f = 1$ or $e = f = 1$, respectively), then the same calculation yields
$\frac{1}{f}\chi(\sh{E}, \sh{F}) = 1 + \frac{\beta}{f} + \lambda$ (respectively
$\frac{1}{e}\chi(\sh{E}, \sh{F}) = \frac{\alpha}{e} + 1 + \lambda$ and
$\chi(\sh{E}, \sh{F}) = 2 + \lambda$). In the case $e = f = 1$, $\lambda$ coincides with the
self-intersection number of the toric divisor associated to $l$.
\end{sub}

\begin{sub}
Now, if we transfer above considerations to $Y(\mathbf{E})$, we have in terms of reduced circumference
segments for every $1 \leq i \leq n$ that $\frac{1}{e_i e_{i + 1}}\chi(\sh{E}_i, \sh{E}_{i + 1})
= \det(q_i, q_{i + 1})$, where
\begin{equation*}
\det(q_i, q_{i + 1}) =
\begin{cases}
\frac{\alpha_i}{e_i} + \frac{\beta_i}{e_{i + 1}} + \lambda_i & \text{ if } e_i, e_{i + 1} > 1\\
1 + \frac{\beta_i}{e_{i + 1}} + \lambda_i & \text{ if } e_i = 1, e_{i + 1} > 1\\
\frac{\alpha_i}{e_i} + 1 + \lambda_i & \text{ if } e_i > 1, e_{i + 1} = 1\\
2 + \lambda_i \text{ if } e_i^2, e_{i + 1}^2 = 1.
\end{cases}
\end{equation*}
It is a consequence of local change of coordinates that $\alpha_i = e_i - \beta_{i - 1}$ for every $i$
and in particular, for any $i$ with $e_i > 1$ we have $\frac{\alpha_i}{e_i} + \frac{\beta_{i - 1}}{e_i}
= 1$. Thus we get:
\begin{equation*}
K_{Y(\mathbf{E})}^2 = 12 - n = \sum_{i = 1}^n \det(q_i, q_{i + 1}) = \sum_{i = 1}^n \lambda_i +
2 n - \vert I \vert,
\end{equation*}
where $I = \{1 \leq i \leq n \mid e_i > 1\}$ as in \ref{resolutionshearterm}, hence
\begin{equation*}
\sum_{i = 1}^n \lambda_i = 12 - 3n + \vert I \vert.
\end{equation*}
That is, the sum of the $\lambda_i$ coincides with the sum of the $a_i$ from \ref{resolutionshearterm}.
Using mutation, one can trace the $a_i$ and the $\lambda_i$ to see that indeed $a_i = \lambda_i$
for every $i$, though we will leave the verification to the reader.
\end{sub}

\section*{Appendix: Toric surfaces}

\setcounter{theorem}{0}
\renewcommand{\thesection}{A}

A toric surface is a normal algebraic surface $X$ on which an algebraic torus $T \simeq
\mathbb{G}_m(\K)^2$
acts such that $T$ embeds into $X$ as an open dense orbit and the group action extends the
multiplication on $T$. In this appendix we want to remind the reader on standard material on
toric surfaces as it can be found in standard textbooks such as \cite{CoxLittleSchenck}.
However, we will need to paraphrase some of the material in order to suit its use in the main text.

Let us denote $M \simeq \Z^2$ the character group of $X$ and $N$ its dual module. We denote $M_\Q :=
M \otimes_\Z \Q$ and $N_\Q := N \otimes_\Z \Q$. The complement of $T$ in $X$ (if nonempty) is given
by a union of divisors $D_1 \cup \dots \cup D_n$. We are interested in three cases:
\begin{enumerate}
\item $X$ is affine and has a fixpoint with respect to the torus action.
\item $X$ can be covered by two affine toric varieties and has two fixpoints.
\item $X$ is complete.
\end{enumerate}

Each of these cases is completely determined by a collection of primitive lattice vectors
$l_1, \dots, l_n$ in $N$. In every case, we will write $L$ for the matrix whose rows are given
by the $l_i$ which we will interpret as $\Z$-linear map from $M$ to $\Z^n$.

\begin{sub}[\bf The affine case]\label{affinetoricsurfaces}
In the affine case, we have $n = 2$ and the $T$-fixpoint is $D_1 \cap D_2$, where $D_1$ and $D_2$
both are isomorphic to $\mathbb{A}_\K^1$. The vectors $l_1$ and $l_2$ generate over $\Q_{\geq 0}$ a strictly
convex polyhedral cone in $N_\Q$. In general, the fixpoint of $X$ is a quotient singularity,
i.e. $X \simeq \mathbb{A}_\K^2 / \mu_v$, where $\mu_v = \spec \K[G]$ the abelian group scheme over $\K$
corresponding to the cyclic group $G \simeq N / (\Z l_1 + \Z l_2) \simeq \Z / v \Z$.

As $l_1$ and $l_2$ are primitive, one can choose a suitable basis for $N$ such that
$l_1 = (1, 0)$ and $l_2 = (-k, v)$, where $\gcd\{k, v\} = 1$ and $0 < k < v$. In the case
that $\K$ is algebraically closed, and $\characteristic \K$ and
$v$ are coprime, this yields a customary representation for the action of $\mu_v$ on $\mathbb{A}_\K^2$
as $\diag(\eta, \eta^k)$, where $\eta \in \K$ is a $v$-th root of unity. We also use the
notation $\frac{1}{v}(1, k)$ to denote a toric singularity which up to choice of coordinates
can be represented in this way.
Note that $v = \det(l_1, l_2)$, which equals the lattice volume of the parallelogram spanned by
$l_1$ and $l_2$. We will therefore very often refer to $\det(l_1, l_2)$ as the {\em volume} of
$l_1$ and $l_2$.
A useful invariant for us will be the lattice vector $l_2 - l_1$:

\begin{definition}\label{circumferencesegmentdef}
We call $l_2 - l_1$ the {\em circumference segment} of the pair $l_1, l_2$. We call
$\frac{1}{\det(l_1, l_2)}(l_2 - l_1)$ the {\em reduced} circumference segment.
\end{definition}

Note that the circumference segment $l_2 - l_1$ in general is not primitive and the reduced segment
in general is not integral.

\begin{lemma}\label{peri0}
Assume that $\mu_v$ acts on $\mathbb{A}_\K^2$ with weights $\frac{1}{v}(1, k)$. Then the lattice length of
$l_2 - l_1$ is $\gcd\{v, k + 1\}$.
\end{lemma}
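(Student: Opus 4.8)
The plan is to compute everything directly in the convenient coordinates supplied by \ref{affinetoricsurfaces}, so that the statement reduces to an elementary gcd computation. First I would recall the relevant notion: for a vector $w \in N \cong \Z^2$, its lattice length is the largest positive integer $m$ such that $\frac{1}{m}w$ is again a lattice vector. Equivalently, writing $w = (a, b)$ in a $\Z$-basis of $N$, the lattice length equals $\gcd\{a, b\}$. I would first note that this quantity is independent of the chosen basis, since any change of basis of $N$ is given by an element of $\operatorname{GL}_2(\Z)$, and such a matrix preserves the gcd of the coordinates of a vector (it sends lattice vectors to lattice vectors in both directions). Hence there is no loss of generality in computing the lattice length in whatever basis is most convenient.

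Next I would invoke the standard normalization of the singularity $\frac{1}{v}(1, k)$ recorded in \ref{affinetoricsurfaces}: we may choose a basis of $N$ so that $l_1 = (1, 0)$ and $l_2 = (-k, v)$, where $\gcd\{k, v\} = 1$ and $0 < k < v$. In these coordinates the circumference segment of the pair is simply $l_2 - l_1 = (-(k + 1),\, v)$. Its lattice length is then $\gcd\{k + 1, v\} = \gcd\{v, k + 1\}$, which is exactly the asserted value. Note that this also transparently explains why the circumference segment is in general non-primitive, as remarked just before the lemma: the gcd $\gcd\{v, k+1\}$ need not equal $1$.

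I do not expect any genuine obstacle here; the content of the lemma is entirely captured by the coordinate normalization of \ref{affinetoricsurfaces} together with the description of lattice length as the gcd of coordinates. The only point deserving explicit mention in the write-up is the basis-independence of the lattice length, which is what licenses passing to the normalized coordinates; everything else is a one-line subtraction followed by reading off the gcd.
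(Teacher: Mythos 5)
Your proof is correct and follows essentially the same route as the paper: normalize coordinates so that $l_1 = (1,0)$, $l_2 = (-k, v)$, compute $l_2 - l_1 = (-(k+1), v)$, and read off the lattice length as $\gcd\{k+1, v\}$. The only difference is that you make explicit the basis-independence of lattice length under $\operatorname{GL}_2(\Z)$, which the paper leaves implicit; this is a reasonable addition but not a different argument.
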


\begin{proof}
With above coordinate representation, we get $l_2 - l_1 = (-k - 1, v) = g ((-k - 1) / g, v/g) =: g P$,
where $g = \gcd\{k + 1, v\}$ and $P$ is a primitive lattice vector.
\end{proof}
\end{sub}

\begin{sub}[\bf The minimal linearly dependent case]\label{minlindep}
Let $l_1, l, l_2$ be three primitive lattice vectors such that $l_1$ and $l_2$ lie in opposite half
spaces with respect to the line $\Q l$. Then $l_1, l$ and $l, l_2$ generate strictly convex polyhedral
cones in $N_\Q$ which intersect in the common facet $\Q_{\geq 0} l$ and therefore generate a fan
corresponding to a two-dimensional toric surface which is covered by two affine toric surfaces each of
which contains a single torus fixpoint. We choose an orientation on $N_\Q$ such that $l_1, l, l_2$
are ordered counter-clockwise. Then they satisfy a relation
\begin{equation*}
w l_1 + a l + v l_2 = 0,
\end{equation*}
where $v = \det(l_1, l)$, $w = \det(l, l_2)$, $a = \det(l_2, l_1)$, and $v, w > 0$. This relation is unique up
to a common scalar multiple of the coefficients. If $a = 0$, then $v = w$ and $l_1 = -l_2$.
We have the two circumference segments $p_1 = l - l_1$ and $p_2 = l_2 - l$ and the corresponding reduced
circumference segments $q_1 = \frac{1}{v} p_1$ and $q_2 = \frac{1}{w} p_2$. We observe:
\begin{equation*}
\det(p_1, p_2) = \det(l - l_1, l_2 - l) = a + v + w, \qquad \det(q_1, q_2) = \frac{1}{vw}(a + v + w).
\end{equation*}
It follows that the sum $a + v + w$ determines the convexity of the configuration of lattice vectors.
That is, $l$ is not contained in the convex hull of $l_1, l_2$ and the origin if and only if
$a + v + w > 0$. The vectors $l_1, l, l_2$ lie on a line in $N_\Q$ if and only if $a + v + w = 0$.
\end{sub}

\begin{lemma}\label{triplelemma}
Let $a_1, a_3 > 0$ and $a_2$ be integers and $l_1, l_2, l_3 \in N$ primitive such that $a_{\pi(1)}
= (\operatorname{sgn} \pi) \cdot \det(l_{\pi(2)}, l_{\pi(3)})$ for any permutation $\pi \in
S_3$. Then
\begin{enumerate}[(i)]
\item\label{triplelemmai} $a_1 l_1 + a_2 l_2 + a_3 l_3 = 0$,
\item\label{triplelemmaii} $\gcd\{a_i, a_j\} = \gcd\{a_1, a_2, a_3\}$ for any $1 \leq i \neq j \leq 3$,
\item\label{triplelemmaiii}
The $l_i$ are uniquely determined up to a transformation by $\operatorname{GL}_\Z(N)$.
\end{enumerate}
\end{lemma}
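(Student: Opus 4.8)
The hypothesis unwinds to $a_1=\det(l_2,l_3)$, $a_2=\det(l_3,l_1)$, $a_3=\det(l_1,l_2)$, since a $3$-cycle has sign $+1$ and a transposition sign $-1$. For (\ref{triplelemmai}) the plan is to invoke the universal identity $\det(v_2,v_3)\,v_1+\det(v_3,v_1)\,v_2+\det(v_1,v_2)\,v_3=0$, valid for any three vectors in a rank-two module: the left-hand side is multilinear and alternating, so it may be checked on a basis, or read off from the vanishing of a $3\times 3$ determinant with a repeated row. Substituting $v_i=l_i$ gives exactly $a_1l_1+a_2l_2+a_3l_3=0$.

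For (\ref{triplelemmaii}) it suffices to show that for each pair the gcd divides the remaining coefficient. Fixing the pair $\{a_1,a_2\}$ and setting $d=\gcd\{a_1,a_2\}$, part (\ref{triplelemmai}) gives $a_3 l_3=-(a_1l_1+a_2l_2)$, so $d$ divides every coordinate of $a_3 l_3$. Since $l_3$ is primitive its coordinates have gcd $1$, whence the coordinates of $a_3 l_3$ have gcd $|a_3|$; therefore $d\mid a_3$. Thus $\gcd\{a_1,a_2\}\mid\gcd\{a_1,a_2,a_3\}$, the reverse divisibility being trivial, so the two are equal. The identical argument, using primitivity of $l_1$ and of $l_2$ respectively, disposes of the other two pairs.

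For (\ref{triplelemmaiii}), let $l_1,l_2,l_3$ and $l_1',l_2',l_3'$ be two triples sharing the same $a_1,a_2,a_3$. Since $\det(l_1,l_2)=a_3\neq 0$, the pair $l_1,l_2$ is a $\Q$-basis of $N_\Q$, so there is a unique $\Q$-linear $g\colon N_\Q\to N_\Q$ with $g(l_1)=l_1'$ and $g(l_2)=l_2'$. The key preliminary observation is that $g$ automatically carries the entire configuration across: by (\ref{triplelemmai}) both triples satisfy $l_3=-\tfrac{1}{a_3}(a_1l_1+a_2l_2)$ and $l_3'=-\tfrac{1}{a_3}(a_1l_1'+a_2l_2')$, so $g(l_3)=l_3'$, and moreover $\det g=\det(l_1',l_2')/\det(l_1,l_2)=a_3/a_3=1$. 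It then remains to prove that $g$ is an automorphism of the lattice $N$, i.e. that $g\in\operatorname{GL}_{\Z}(N)$.

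This integrality claim is the heart of the matter and the step I expect to be the main obstacle. The map $g$ certainly restricts to a $\Z$-linear isomorphism of $N'':=\Z l_1+\Z l_2+\Z l_3$ onto $N''':=\Z l_1'+\Z l_2'+\Z l_3'$, as it sends generators to generators and $g^{-1}$ does likewise. My plan is to pin down the index $[N:N'']$ using part (\ref{triplelemmaii}): in coordinates with $l_1=(1,0)$ and $l_2=(p,a_3)$ the quotient $N/(\Z l_1+\Z l_2)$ is cyclic of order $a_3$, and the image of $l_3$ there has order $a_3/\gcd\{a_2,a_3\}$, so by (\ref{triplelemmaii}) we get $[N:N'']=\gcd\{a_1,a_2,a_3\}$, and likewise $[N:N''']$. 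Since $\det g=1$ and $g(N'')=N'''$ with both sublattices of this common index, one must finally argue that $g$ preserves $N$ itself rather than merely mapping $N''$ to $N'''$. This is the delicate point: when $\gcd\{a_1,a_2,a_3\}>1$ there are several overlattices of $N''$ of the given index, and the proof has to use the primitivity of all three vectors $l_1,l_2,l_3$ to rule out the spurious lattice maps that permute these overlattices. Once $g(N)=N$ is secured, $\det g=1$ yields $g\in\operatorname{GL}_{\Z}(N)$, and the triple is determined up to this transformation.
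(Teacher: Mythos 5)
Parts (i) and (ii) of your proposal are correct and essentially coincide with the paper's own proof: (i) is the universal alternating identity for three vectors in a rank-two lattice (the paper establishes the same relation in its discussion of the minimal linearly dependent case), and your divisibility argument for (ii) --- $d$ divides $a_3l_3$, and primitivity of $l_3$ then forces $d\mid a_3$ --- is exactly the paper's. The problem is (iii). There you correctly reduce the claim to showing that the rational map $g$ with $g(l_1)=l_1'$, $g(l_2)=l_2'$ (which automatically satisfies $g(l_3)=l_3'$ and $\det g=1$) is an automorphism of $N$, and you correctly compute $[N:N'']=\gcd\{a_1,a_2,a_3\}$; but at the decisive step you only say that primitivity ``has to'' rule out maps that preserve $N''$ without preserving $N$. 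That is not an argument, so (iii) remains unproven in your proposal.

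Moreover, no argument can close this gap, because (iii) is false whenever $\gcd\{a_1,a_2,a_3\}>1$. Take $(a_1,a_2,a_3)=(3,6,9)$ and
\begin{equation*}
l_1=(1,0),\; l_2=(1,9),\; l_3=(-1,-6), \qquad\quad l_1'=(1,0),\; l_2'=(7,9),\; l_3'=(-5,-6).
\end{equation*}
All six vectors are primitive, and both triples satisfy $\det(l_2,l_3)=3$, $\det(l_3,l_1)=6$, $\det(l_1,l_2)=9$, so both satisfy the full hypothesis with the same $a_i$ (and, consistently, (i) and (ii) hold for them). Any $h\in\operatorname{GL}_\Z(N)$ with $h(l_1)=l_1'$ and $h(l_2)=l_2'$ would have $\det h=+1$ (to preserve $\det(l_1,l_2)=9$) and would fix $(1,0)$, hence be the shear $(x,y)\mapsto(x+by,y)$ for some $b\in\Z$; then $h(l_2)=(1+9b,9)=(7,9)$ forces $9b=6$, which is impossible. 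Allowing in addition a permutation of the indices does not help, since the $a_i$ are pairwise distinct, so any permuted or reflected image realizes a different ordered determinant triple. In your own framework the failure is visible as the map $g=\left(\begin{smallmatrix}1 & 2/3\\ 0 & 1\end{smallmatrix}\right)$: it carries the first triple to the second, has determinant $1$, and maps $N''=N'''=\Z(1,0)\oplus\Z(0,3)$ onto itself, but $g(N)\neq N$ --- these are exactly the ``spurious'' lattice maps you worried about, and they really occur. Note that your approach does give a complete proof under the additional hypothesis $\gcd\{a_1,a_2,a_3\}=1$: then your index computation yields $N''=N=N'''$, so $g$ and $g^{-1}$ are integral and $g\in\operatorname{GL}_\Z(N)$. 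It is worth recording that the paper's own proof of (iii) breaks at the same spot: it asserts that the integer solutions of $a_1+a_2x+a_3y=0$ recur with period $a_3$ in $x$, whereas the correct period is $a_3/\gcd\{a_2,a_3\}$, so the normalization $-a_3<x<0$ pins down $x$ uniquely only when the gcd is $1$; the example above is precisely a case where it does not. Your instinct about where the difficulty sits was therefore exactly right, and what you located is a defect of the lemma as stated, not merely of your own argument.
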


\begin{proof}
We have seen (\ref{triplelemmai}) already in \ref{minlindep}.

(\ref{triplelemmaii})
Without loss of generality we restrict to the case $i = 1$, $j = 2$. Clearly, $\gcd\{a_1, a_2, a_3\}$ divides
$\gcd\{a_1,a_2\} =: g$. Now we write $a_1 l_1 + a_2 l_2 = -a_3 l_3$. Clearly, $g$ divides the left hand
side and hence the right hand side. But $l_3$ is primitive, so $g$ cannot divide $l_3$, hence
$g$ divides $a_3$ and thus $g$ divides $\gcd\{a_1, a_2, a_3\}$.

(\ref{triplelemmaiii})
Up to a choice of basis for $N$, we can write $l_1 = (1, 0)$, $l_2 = (x, a_3)$, $l_3 = (y, -a_2)$,
where $-a_3 < x < 0$ and $y$ is determined by the relation $a_1 + a_2 x + a_3 y = 0$. We set
$a_i =: a'_i g$, where $g := \gcd\{a_1, a_2, a_3\}$. Then the $a'_i$ are pairwise coprime by
(\ref{triplelemmaii}) and equation $a'_1 + a'_2 x + a'_3 y = 0$ holds as well. Then the set of
solutions $(x, y)$ is given by the set $(x'_0, y'_0) + k (-a'_3, a'_2)$, where $k \in \Z$ and
$(x_0, y_0)$ is some special solution. Multiplying by $g$, the solutions are given by the set
$(x_0, y_0) + k(-a_3, a_2)$ or $k \in \Z$ and $(x_0, y_0)$ some special solution. It follows that
the condition $-a_3 < x < 0$ determines $x$ (and therefore $y$) uniquely.
\end{proof}

\begin{sub}[\bf The complete case]\label{completecase}
In this case the $D_i$
form a cycle of $\mathbb{P}^1$'s and there are $n$ torus fixpoints which are given by the
intersections $D_i \cap D_{i + 1}$. Here it is customary to consider in this case the integers
$1, \dots, n$ as a system of representatives for $\Z / n \Z$, i.e. we read the indices modulo $n$.
The Chow group of $X$ is determined by the following standard short exact sequence
\begin{equation*}
0 \longrightarrow M \overset{L}{\longrightarrow} \Z^n \longrightarrow \Ch^1(X) \longrightarrow 0,
\end{equation*}
such that the $i$-th standard basis vector of $\Z^n$ maps to the class of $D_i$ in $\Ch^1(X)$.
The canonical divisor on $X$ can be represented by $K_X = -\sum_{i = 1}^n D_i$.
On $\Ch^1(X)$, there exists a $\Q$-valued intersection product which is completely
determined by triple relations as in \ref{minlindep}. That is, if for every $i$ we denote
$v_i := \det(l_{i - 1}, l_i)$ and $a_i := \det(l_{i + 1}, l_{i - 1})$, then we have for every
$i$ the following relation:
\begin{equation*}
v_{i + 1} l_{i - 1} + a_i l_i + v_i l_{i + 1} = 0, \quad \text{ respectively } \quad
\frac{1}{v_i} l_{i - 1} + \frac{a_i}{v_i v_{i + 1}} l_i + \frac{1}{v_{i + 1}}l_{i + 1} = 0,
\end{equation*}
which translates to intersection products:
\begin{equation*}
D_{i - 1} D_i = \frac{1}{v_i}, \quad D_i^2 = \frac{a_i}{v_i v_{i + 1}}, \quad
D_i D_{i + 1} = \frac{1}{v_{i + 1}} \qquad \text{ for every } i
\end{equation*}
and $D_i D_j = 0$ else. Note that we choose an orientation on $N_\Q$ such that
$v_i > 0$ for every $i$.

For any $i$, we have a circumference segments $p_i := l_i - l_{i - 1}$ and its reduction $q_i :=
\frac{p_i}{v_i}$. As in \ref{minlindep}, we have equations
\begin{equation*}
\det(p_i, p_{i + 1}) = a_i + v_i + v_{i + 1} \quad \text{ and } \quad
\det(q_i, q_{i + 1}) = \frac{1}{v_i v_{i + 1}}(a_i + v_i + v_{i + 1}) =
D_i (D_{i - 1} + D_i + D_{i + 1})
\end{equation*}
for every $i$.
\end{sub}

\begin{lemma}\label{ksquare}
We have $K_X^2 = \sum_{i = 1}^n \det(q_i, q_{i + 1}) =
\sum_{i = 1}^n \frac{a_i + v_i + v_{i + 1}}{v_i v_{i + 1}}$.
\end{lemma}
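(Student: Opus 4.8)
The plan is to compute $K_X^2$ directly from the canonical presentation $K_X = -\sum_{i=1}^n D_i$ recorded in the complete-case discussion, and to recognize each summand $\det(q_i, q_{i+1})$ as an intersection number against $K_X$. The second equality in the statement is already available: it is precisely the formula $\det(q_i, q_{i+1}) = \frac{1}{v_i v_{i+1}}(a_i + v_i + v_{i+1})$ listed just above the lemma, so all the content lies in the first equality $K_X^2 = \sum_i \det(q_i, q_{i+1})$.

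First I would invoke the two facts assembled in the preceding paragraph: that $D_i D_j = 0$ whenever $j \notin \{i-1, i, i+1\}$ (with indices read cyclically modulo $n$), and that $\det(q_i, q_{i+1}) = D_i(D_{i-1} + D_i + D_{i+1})$. The key observation is that, because every other intersection product against $D_i$ vanishes, the three-term parenthetical expression may be completed to the full boundary cycle without changing its value:
\begin{equation*}
\det(q_i, q_{i+1}) = D_i(D_{i-1} + D_i + D_{i+1}) = D_i \cdot \sum_{j=1}^n D_j = -D_i K_X,
\end{equation*}
where the last step uses $\sum_{j=1}^n D_j = -K_X$. Summing over $i$ then collapses everything:
\begin{equation*}
\sum_{i=1}^n \det(q_i, q_{i+1}) = -\Big(\sum_{i=1}^n D_i\Big) K_X = -(-K_X)K_X = K_X^2,
\end{equation*}
which is exactly the desired identity.

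There is essentially no genuine obstacle here beyond careful cyclic bookkeeping; the argument is a one-line consequence of the intersection data. If one preferred to avoid the closed form $\det(q_i, q_{i+1}) = D_i(D_{i-1}+D_i+D_{i+1})$ entirely, the fallback is to expand $K_X^2 = (\sum_i D_i)^2$ into its diagonal contribution $\sum_i D_i^2$ and adjacent off-diagonal contribution $2\sum_i D_i D_{i+1}$, substitute $D_i^2 = a_i/(v_i v_{i+1})$ and $D_i D_{i+1} = 1/v_{i+1}$, and match this against $\sum_i (a_i + v_i + v_{i+1})/(v_i v_{i+1})$; the only point requiring attention is the cyclic reindexing $\sum_i 1/v_i = \sum_i 1/v_{i+1}$, which reconciles the two expressions. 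I expect the first route to be the cleanest and would present that one.
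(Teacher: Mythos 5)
Your proof is correct and is exactly the argument the paper intends: its one-line proof (``by above discussion and $K_X = -\sum_{i=1}^n D_i$'') is precisely your computation, using $D_i D_j = 0$ for non-adjacent indices together with $\det(q_i, q_{i+1}) = D_i(D_{i-1}+D_i+D_{i+1})$ to get $\det(q_i,q_{i+1}) = -D_i K_X$ and then summing. Nothing is missing; your fallback expansion of $(\sum_i D_i)^2$ is just the same computation in a different order.
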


\begin{proof}
By above discussion and $K_X = -\sum_{i = 1}^n D_i$.
\end{proof}

Note that in the case that $X$ is smooth, we have $v_i = 1$ for every $i$ and this formula specializes
to $K_X^2 = 12 - n = 2n + \sum_{i = 1}^n a_i$ or equivalently, $\sum_{i = 1}^n a_i = 12 - 3n$.

\begin{sub}\label{desingularization}
The minimal resolution of a toric surface singularity can be described with help of Hirzebruch-Jung
continued fractions. That is, in the situation of \ref{affinetoricsurfaces}, we have
$l_1 = (1, 0)$ and $l_2 = (-k, v)$ and we write the quotient $\frac{v}{k} = [b_1, \dots, b_r] :=
b_1 - 1 / (b_2 - 1 / (b_3 - \cdots / (b_{r - 1} - 1 / b_r) \cdots ))$
and the $b_i$ are integers $\geq 2$.
Now, in the first step of the resolution, we introduce a new primitive vector $(0, 1)$ which
subdivides the cone into smaller cones of volumes $1$ and $k$, respectively. Ultimately, this
new vector will correspond to a component with self-intersection number $-b_1$ of the the exceptional
divisor in the minimal resolution. After this first step we see that the fraction $\frac{v}{k}$ can be
interpreted as the ratio of the volume of the original cone and the
volume of the (possibly still) singular cone after the first resolution step.
Iterating this, we see that the continued fraction $[b_i, \dots, b_r]$ equals the ratio of volumes
$V_{i - 1} / V_i$, where $V_i$ is the volume of the remaining singular cone after the $i$-th
resolution step (we set $V_0 := v$ and $V_1 := k$, and it follows that $V_r = 1$).
\end{sub}

We now are interested in the behaviour of the canonical self-intersection number under minimal
resolutions. That is, we consider a complete toric surface $X$ which has a singular point and
$Y \rightarrow X$ a toric morphism which is the result of $s \leq r$ steps of the minimal
desingularization procedure for this point as described in \ref{desingularization}.

\begin{proposition}\label{ksquaredesing}
With notation of \ref{desingularization}, we have $K_X^2 - K_Y^2 = \sum_{i = 1}^s
\frac{(V_i - V_{i - 1} + 1)^2}{V_{i - 1} V_i}$.
\end{proposition}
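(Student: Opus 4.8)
The plan is to induct on the number of resolution steps $s$ and reduce everything to a single-step computation that then telescopes. Write $X = Y_0, Y_1, \dots, Y_s = Y$ for the intermediate complete toric surfaces, where $Y_i$ is obtained from $Y_{i-1}$ by the $i$-th subdivision, so that after $i$ steps the remaining singular cone has volume $V_i$. Since $K_X^2 - K_Y^2 = \sum_{i=1}^s \big(K_{Y_{i-1}}^2 - K_{Y_i}^2\big)$, it suffices to show that a single step contributes exactly $\frac{(V_i - V_{i-1} + 1)^2}{V_{i-1} V_i}$ and then sum.

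For the single step I would use the formula $K^2 = \sum_j \det(q_j, q_{j+1})$ of Lemma \ref{ksquare}, which is valid ($\Q$-valued) for any complete toric surface, singular or not. The $i$-th step inserts one new primitive ray $l^*$ into the singular cone $\langle u, w\rangle$ (with $\det(u,w) = V_{i-1}$), splitting it, as in \ref{desingularization}, into a smooth cone of volume $1$ adjacent to $u$ and a cone of volume $V_i$ adjacent to $w$ (the mirror case is identical by symmetry). The decisive structural observation is that this subdivision is purely local: in $\sum_j \det(q_j, q_{j+1})$ only the term attached to $u$, the term attached to $w$, and the new term attached to $l^*$ are affected, while every other term (in particular those attached to the outer neighbours $u_-$ of $u$ and $w_+$ of $w$) is unchanged. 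Applying the triple relation of \ref{minlindep} to $u, l^*, w$ gives $l^* = (V_i\, u + w)/V_{i-1}$.

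I would then expand each of the three affected terms via $\det(q_j, q_{j+1}) = \frac{1}{v_j v_{j+1}}(a_j + v_j + v_{j+1})$ with $v_j = \det(l_{j-1},l_j)$ and $a_j = \det(l_{j+1},l_{j-1})$, as recorded in the appendix; equivalently $\det(q_j,q_{j+1}) = D_j^2 + \frac{1}{v_j} + \frac{1}{v_{j+1}}$ with $D_j^2 = a_j/(v_j v_{j+1})$. The summands $\frac{1}{v_j}$ coming from the outer neighbours cancel between ``before'' and ``after'', and a short determinant computation using $l^* = (V_i\, u + w)/V_{i-1}$ shows that the self-intersection changes are also independent of $u_-, w_+$, namely $(D_u')^2 - D_u^2 = -V_i/V_{i-1}$, $(D_w')^2 - D_w^2 = -1/(V_{i-1}V_i)$, and $D_{l^*}^2 = -V_{i-1}/V_i$. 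Collecting these over the common denominator $V_{i-1}V_i$ leaves the numerator $-(V_i - V_{i-1})^2 + 2(V_{i-1} - V_i) - 1 = -(V_i - V_{i-1} + 1)^2$, whence $K_{Y_{i-1}}^2 - K_{Y_i}^2 = \frac{(V_i - V_{i-1} + 1)^2}{V_{i-1}V_i}$; summing over $i$ finishes the proof. As a sanity check, a crepant (volume-$2$, $A_1$) step has $V_i - V_{i-1} + 1 = 0$ and contributes $0$, as it should.

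I expect the main obstacle to be the careful bookkeeping in this local computation: verifying that precisely three terms of $\sum_j \det(q_j, q_{j+1})$ change, correctly fixing the orientation (which side carries the volume-$1$ cone), and tracking how the rational self-intersection numbers of $u$ and $w$ shift under the subdivision so that their dependence on the outer neighbours cancels. The final algebraic step — recognizing that the numerator collapses to the perfect square $-(V_i - V_{i-1} + 1)^2$ — is the crux, but once the three local contributions are in hand it is routine.
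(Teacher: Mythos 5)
Your proof is correct and follows essentially the same route as the paper's: both telescope over the individual subdivision steps and, for a single step, apply Lemma \ref{ksquare} together with the locality observation (only the terms attached to the two generators of the subdivided cone and to the newly inserted ray change), with the algebra collapsing to the perfect square $(V_i - V_{i-1} + 1)^2$. The only difference is presentational — the paper fixes coordinates $l_2 = (1,0)$, $l_3 = (-k, v_3)$, $l = (0,1)$ and computes the neighbouring rays explicitly, whereas you work coordinate-free from the relation $l^* = (V_i u + w)/V_{i-1}$ and track the changes in the $D_j^2$ and $1/v_j$ contributions separately; the substance is identical.
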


\begin{proof}
We consider the very first step of of the resolution, i.e. we factor $Y = Z_s \rightarrow
\cdots \rightarrow Z_1 \rightarrow Z_0 = X$, where every map $Z_i \rightarrow Z_{i - 1}$ is a partial
resolution step which precisely adds one more toric divisor according to the procedure of
\ref{desingularization}. For the first step, we use the presentation of $K_X^2$ and $K_{Z_1}^2$
of Lemma \ref{ksquare}. Up to cyclic renumbering and
a choice of basis of $N$, we can assume that the singular point on $X$ being resolved is represented
by the cone generated by $l_2 = (1, 0)$ and $l_3 = (-k, v_2)$ and one primitive vector
$l = (0, 1)$ is added as described in \ref{desingularization}. Using the relations $v_{i + 1} l_{i - 1}
+ a_i l_i + v_i l_{i + 1} = 0$, we compute $l_1 = (\frac{v_2 k - a_2}{v_3}, -v_2)$ and
$l_4 = (\frac{a_3 k - v_4}{v_3}, -a_3)$ (note that these will coincide if $n = 3$).
Then we immediately obtain the relations
\begin{equation*}
l_1 + a l_2 + v_2 l = 0 \ \text{ with } \ a = \frac{a_2 - v_2 k}{v_3} \quad \text{ and } \quad
v_4 l + b l_3 + k l_4 = 0 \ \text{ with } \ b = \frac{a_3 k - v_4}{v_3}.
\end{equation*}
Now, the effect of the canonical self-intersection number is local in the sense that, if we write
$K_X^2 =: R + \frac{a_2 + v_2 + v_3}{v_2 v_3} + \frac{a_3 + v_3 + v_4}{v_3 v_4}$, then
\begin{align*}
K_{Z_1}^2 - R & = \frac{a + v_2 + 1}{v_2} + \frac{-v_2 + 1 + k}{k} + \frac{b + k + v_4}{k v_4}\\
& = \frac{a_2 + v_2 + v_3}{v_2 v_3} + \frac{a_3 + v_3 + v_4}{v_3 v_4} + 2 + \frac{2}{k} -
\frac{k}{v_3} - \frac{v_3}{k} - \frac{1}{v_3 k} - \frac{2}{v_3}\\
& = K_X^2 - R - \frac{(k - v_3 + 1)^2}{k v_3}.
\end{align*}
Iterating this procedure implies the assertion.
\end{proof}

It is useful to understand also how the terms $\det(q_i, q_{i + 1})$ behave under resolution of
singularities.

\begin{corollary}\label{ksquaredesing2}
Let $l_1, l_2, l_3$ and $w, a, v$ as in \ref{minlindep} such that $l_1$ and $l_2$ generate a singular
cone and assume that we have a minimal resolution corresponding to a continued fraction
$[b_1, \dots, b_r]$. If we denote $v =: V_0, V_1, \dots V_s$ for $s \leq r$ the associated volumes
and $\xi_1, \dots, \xi_s$ the successively added primitive vectors, then we have a relation
\begin{equation*}
w \xi_s + b l_2 + V_s l_3 = 0 \quad \text{ such that } \quad \frac{a + v + w}{vw} -
\frac{b + V_s + w}{V_s w} = \sum_{i = 1}^s \frac{V_i - V_{i - 1} + 1}{V_{i - 1} V_i}.
\end{equation*}
\end{corollary}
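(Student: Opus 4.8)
The plan is to recognize both terms on the left-hand side as instances of the convexity quantity $\det(q_1, q_2)$ from \ref{minlindep}, and then to express their difference as a telescoping sum over the chain of rays produced by the resolution. Since $l_2, l_3$ (together with the volume $w = \det(l_2, l_3)$ and the reduced segment $q_2 = \frac{1}{w}(l_3 - l_2)$) remain fixed throughout the resolution of the cone $\langle l_1, l_2 \rangle$, I would introduce, for each ray $u$ appearing on the $l_1$-side, the quantity
\begin{equation*}
f(u) := \det\Big(\tfrac{1}{\det(u, l_2)}(l_2 - u),\, q_2\Big),
\end{equation*}
which is exactly the convexity $\det(q_1, q_2)$ attached to the triple $u, l_2, l_3$ by \ref{minlindep}. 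Extending the given notation by setting $\xi_0 := l_1$, so that the resolution chain reads $l_1 = \xi_0, \xi_1, \dots, \xi_s$ with $\det(\xi_{i-1}, \xi_i) = 1$ and $V_i = \det(\xi_i, l_2)$ (and $V_0 = v$) as guaranteed by \ref{desingularization}, the left-hand side of the assertion is precisely $f(\xi_0) - f(\xi_s)$, which I would split as the telescoping sum $\sum_{i=1}^{s} \big(f(\xi_{i-1}) - f(\xi_i)\big)$.

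The next step is to put $f$ into transparent closed form. Using $\det(l_2, q_2) = \frac{1}{w}\det(l_2, l_3) = 1$ and abbreviating $d_j := \det(\xi_j, l_3)$, bilinearity of the determinant gives
\begin{equation*}
f(\xi_j) = \frac{1}{V_j} + \frac{1}{w} - \frac{d_j}{V_j w}.
\end{equation*}
In particular $f(\xi_0) = \frac{a + v + w}{vw}$ (since $d_0 = \det(l_1, l_3) = -a$) and $f(\xi_s) = \frac{b + V_s + w}{V_s w}$ (since $d_s = -b$ by the stated relation $w\xi_s + b\, l_2 + V_s l_3 = 0$), which already matches the two outer terms of the telescope with the two terms on the left of the claim.

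It then remains to evaluate a single step $f(\xi_{i-1}) - f(\xi_i)$. The essential input is the relation among three consecutive rays: applying \ref{minlindep} to the triple $\xi_{i-1}, \xi_i, l_2$ (with inner volumes $\det(\xi_{i-1}, \xi_i) = 1$, $V_{i-1}$, $V_i$) yields $l_2 = V_{i-1}\xi_i - V_i \xi_{i-1}$. Pairing this with $l_3$ under the determinant gives $w = V_{i-1} d_i - V_i d_{i-1}$, hence $d_{i-1} V_i - d_i V_{i-1} = -w$. Substituting the closed form of $f$ and using this identity to cancel the $d$-terms, one finds
\begin{equation*}
f(\xi_{i-1}) - f(\xi_i) = \Big(\frac{1}{V_{i-1}} - \frac{1}{V_i}\Big) + \frac{1}{V_{i-1} V_i} = \frac{V_i - V_{i-1} + 1}{V_{i-1} V_i},
\end{equation*}
which is exactly the $i$-th summand; summing over $1 \le i \le s$ telescopes to $f(\xi_0) - f(\xi_s)$ and proves the formula.

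I expect the main obstacle to be the book-keeping of orientations and signs in the determinants: one must fix the counterclockwise convention of \ref{minlindep} consistently so that $v, w$ and all the $V_i$ are positive and so that the three-ray relation $l_2 = V_{i-1}\xi_i - V_i \xi_{i-1}$ carries the correct signs, since a single transposed determinant turns $-w$ into $+w$ and destroys the cancellation. The purely geometric facts that each intermediate cone $\langle \xi_{i-1}, \xi_i \rangle$ is smooth and that $\langle \xi_s, l_2 \rangle$ is the remaining singular cone of volume $V_s$ are exactly what the resolution procedure of \ref{desingularization} supplies, so once the conventions are pinned down the remaining computation is routine.
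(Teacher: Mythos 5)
Your proposal is correct and takes essentially the same route as the paper: both arguments reduce the claim to a single resolution step and verify the one-step identity $\frac{a+v+w}{vw} - \frac{b + V_1 + w}{V_1 w} = \frac{V_1 - V_0 + 1}{V_0 V_1}$, the paper by an explicit choice of coordinates ($l_1 = (1,0)$, $\xi_1 = (0,1)$, etc.) and you by a coordinate-free telescoping computation. The difference is purely presentational; your invariant handling of the single step, via the relation $l_2 = V_{i-1}\xi_i - V_i \xi_{i-1}$ and the resulting identity $V_{i-1}\det(\xi_i, l_3) - V_i \det(\xi_{i-1}, l_3) = w$, is a clean substitute for the paper's coordinate calculation.
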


\begin{proof}
As in the proof of \ref{ksquaredesing} it is enough to consider one iteration step. We have seen that
we can write $l_1 = (1, 0)$, $l_2 = (-V_1, w)$, $l_3 = (y, -a)$ and thus we obtain the equation
$y = \frac{1}{v}(a V_1 - w)$. It follows that for $\xi_1 = (0, 1)$ we have a relation $w \xi_1
+ b l_2 + V_1 l_3$, where $b := y$. By plugging in $b$, we get
$\frac{a + V_0 + w}{V_0 w} - \frac{b + V_1 + w}{V_1 w} = \frac{V_1 - V_0 + 1}{V_0 V_1}$.
\end{proof}

\end{document}